\def\thmref@flush{%
   \ifx\thmref@last\empty\else
      \ifthmref@comma, \thmref@finaltrue\fi \thmref@commatrue
      \thmref@last \ifx\thmref@stack\empty\else s\fi \thmref@num 0
      \let\do\thmref@one \thmref@stack
      \ifcase\thmref@num\or\space and\else\thmref@finaltrue, and\fi
      ~\ref{\thmref@head}\let\thmref@stack\empty\fi}
\def\thmref@one#1{\ifnum\thmref@num>0,\fi
   \space\ref{#1}\advance\thmref@num 1\relax}
\newcommand{\E}{\mathbf{E}}
\renewcommand{\P}{\mathbf{P}}
\newcommand{\1}{\mathbf{1}}
\DeclareMathOperator{\Ber}{Bernoulli}
\DeclareMathOperator{\Bin}{Bin}
\DeclareMathOperator{\Exp}{Exp}
\DeclareMathOperator{\Unif}{Uniform}
\DeclarePairedDelimiter\abs{\lvert}{\rvert}%
\DeclarePairedDelimiter\ceil{\lceil}{\rceil}%
\newcommand{\bigmid}{\;\big\vert\;}
\newcommand{\eqd}{\overset{d}=}
\newcommand{\RR}{\mathbb{R}}
\newcommand{\Bb}{\mathcal{B}}
\newcommand{\GG}{\mathrm{GG}}
\newcommand{\utailprec}[1]{\preceq_{\overline{#1}}}
\newcommand{\ltailprec}[1]{\preceq_{\underline{#1}}}
\newcommand{\utailsucc}[1]{\succeq_{\overline{#1}}}
\newcommand{\Eq}[2][n]{E^{(#1)}_{#2}}
\newcommand{\tq}[2][none]{\ifthenelse{\equal{#1}{none}}{t_{#2}}{t^{(#1)}_{#2}}}
\newtheorem{thm}{Theorem}
\newtheorem{lemma}[thm]{Lemma}
\newtheorem{prop}[thm]{Proposition}
\theoremstyle{remark}
\newtheorem{remark}[thm]{Remark}
\theoremstyle{definition}
\newtheorem{define}[thm]{Definition}
\newcommand*\proc{{\mathpalette\bigcdot@{.7}}}
\newcommand*\bigcdot@[2]{\mathbin{\vcenter{\hbox{\scalebox{#2}{$\m@th#1\bullet$}}}}}
\author{Tobias Johnson}
\address{College of Staten Island}
\email{tobias.johnson@csi.cuny.edu}
\author{Erol Pek\"oz}
\address{Boston University}
\email{pekoz@bu.edu}
\title[Concentration inequalities from monotone couplings]{Concentration inequalities from monotone couplings for graphs, walks, trees and branching processes}
\keywords{Concentration inequality, tail bound, Stein's method, preferential attachment graph, Galton--Watson process}
\subjclass{60E15; 60J80, 05C80}
\begin{document}
  \begin{abstract}
Generalized gamma distributions arise as limits in many settings involving random graphs, walks, trees, and branching processes.  Pek\"oz, R\"ollin, and Ross (2016) exploited characterizing distributional fixed point equations to obtain uniform error bounds for generalized gamma approximations using Stein's method.  Here we show how monotone couplings arising with these fixed point equations can be used to obtain sharper tail bounds that, in many cases, outperform competing moment-based bounds and the uniform bounds obtainable with Stein's method. Applications are given to concentration inequalities for preferential attachment random graphs, branching processes, random walk local time statistics and the size of random subtrees of uniformly random binary rooted plane trees.
  \end{abstract}
  
  \maketitle

  \section{Introduction}
Stein's method is used to obtain distributional approximation error bounds in a wide variety of settings in applied probability where normal, Poisson, gamma and other limits arise.  
The method was introduced by Stein for normal approximation \cite{Stein72}. Chen adapted it
for Poisson approximation \cite{Chen75}, and since then it has been developed in many directions.
Introductions to Stein's method can be found in \cite{Stein1986, BHJ,Chen2010};
we also refer to the surveys \cite{Ross2011, Chatterjee2014, BC2014}. 

One variant of the approach (see \cite{Gold97}, \cite{PRR} and the references therein) starts with a distributional fixed point equation that the limit distribution satisfies and obtains error bounds in terms of how closely both sides of the fixed point equation can be coupled together, i.e., their Wasserstein distance.
The fixed point equation often has a probabilistic interpretation that can be leveraged to achieve a close coupling.  
To illustrate, we define a generalization of the size-bias transform for a random variable.
\begin{define}\label{bbias}
If $X$ is a nonnegative random variable with $0<\E[X^\beta]<\infty,$ we say that $X^{(\beta)}$ is the $\beta-$power bias transform of $X$ if
$$\E[X^\beta]\E[f(X^{(\beta)})]=\E[X^\beta f(X)]$$ holds for all for bounded measurable $f$.
\end{define}

If $X^s \eqd X^{(1)}$, the familiar size-bias transform of $X$,
it can be shown that $X$ satisfies the distributional fixed-point equation $X \eqd X^s -1 $ if and only if $X$  follows a Poisson distribution.
Stein's method can be used to make an approximate version of this statement:
the total variation distance between the law of $X$ and a Poisson distribution
can be bounded by the Wasserstein distance between $X$ and $X^s-1$ \cite[Theorem~4.13]{Ross2011}.
Similarly, the exponential distribution is characterized by the fixed-point equation $X\eqd U X^s$ for an independent $\Unif(0,1)$ variable $U$.
Under the assumption that $X$ has a finite second moment, Stein's method can be used to show that
the Wasserstein distance between $X$ and an exponential distribution is at most twice
the Wasserstein distance between $X$ and $UX^s$ \cite[Theorem~2.1]{Pekoz2011}.

The bounds above allow for accurate approximations of the law of $X$ in the bulk of the distribution.
Tail bounds can also be obtained when the two sides of the distributional fixed-point equation can be coupled
together with some sort of monotonicity. This is carried out for the Poisson distribution
in \cite{Ghosh2011a,AB}. For example, if there exists a coupling of $X^s$ with $X$ so that
$X^s-1\preceq X$ a.s., then $X$ satisfies a Poisson-like tail bound.
\cite{Cook2018} loosened this monotonicity condition to $\P[ X^s-1\leq X \mid X^s\geq x]\geq p$ and applied the results to bounding the second largest eigenvalue of the adjacency matrix of a random regular graph.

The monotonicity condition we study in the current article involves the following definition  from \cite{PRR}.
\begin{define}
For a nonnegative random variable $X$ and $\alpha,\beta>0$,
the variable $X^*$ is the $(\alpha,\beta)$-generalized equilibrium distribution of $X$ if 
  \begin{align*}
    X^*\eqd V_\alpha X^{(\beta)},
  \end{align*}
  where $V_\alpha$ has density $\alpha x^{\alpha-1}\,dx$ on $[0,1]$ and is independent
  of $X^{(\beta)}$. 
  \end{define}

Throughout the paper,  we use the notation $X^*$ to mean a random variable
with the $(\alpha,\beta)$-generalized equilibrium distribution of $X$, with $\alpha$
and $\beta$ specified as necessary.
The familiar equilibrium distribution $X^e$ of $X$ from renewal theory is the case where $\alpha=\beta=1$.
It is well known that the fixed points of the equilibrium transform, i.e., the distributions
satisfying $X^e\eqd X$, are the exponential distributions. Generalizing
this fact, for any $\alpha,\beta>0$ let $\GG(\alpha,\beta)$ denote the $(\alpha,\beta)$-generalized 
gamma distribution,
which has density function $\beta x^{\alpha-1}e^{-x^\beta}/\Gamma(\alpha/\beta)\,dx $ for $x>0$.
The fixed points of the generalized equilibrium transform are the generalized
gamma distributions, up to scaling. That is, for any choice of $\alpha,\beta>0$, the random variable
$X$ satisfies $X^*\eqd X$ if and only if $X\eqd cZ$ where $Z\sim\GG(\alpha,\beta)$
\cite[Theorem~5.1]{KP}.

  The main result of \cite{PRR} is a bound on the Kolmogorov distance between a properly 
  rescaled version of $X$ and $Z\sim\GG(\alpha,\beta)$  when $X$ and $X^*$ are not identical
  but are close in the L\'evy--Prokohorov metric.
  This yields a bound on $\abs{\P[cX>t] - \P[Z>t]}$ that can be used to estimate probabilities
  in the bulk of the distribution of $X$. But since this bound is uniform in $t$,
  it is  too large to be useful for small tail probabilities in  applications.  This is the launching point for the current article.
  
  Let $X\preceq Y$ denote that $X$ is stochastically dominated by $Y$ in the usual stochastic order.
  We introduce two stochastic orders as follows. For a constant $p\in(0,1]$,
  we write $X \utailprec{p} Y$ to denote that $\P[X>t] \leq \P[Y>t]/p$ for all $t\in\RR$.
  Similarly, $X \ltailprec{p} Y$ denotes that $\P[X\leq t]\geq p\P[Y\leq t]$ for all $t\in\RR$.  
  In the $p=1$ case, both orders are the usual one. For $p<1$, they represent two different
  relaxations of the usual order and have alternate characterizations given in \thref{lem:ultail}.

  Our two main result are that stochastic domination of $X^*$ by $X$ in these orders leads to upper and lower tail probability bounds:
  
  \begin{thm}\thlabel{thm:concentration}
    Suppose that $X^*\utailprec{p} X$ for some $p\in(0,1]$ and $\alpha,\beta>0$. 
    Let $\mu=\bigl(\frac{\beta}{\alpha}\E X^\beta\bigr)^{1/\beta}$. If $\alpha\neq\beta$, then
    for any $t\geq 1$,
    \begin{align*}
      \P[X\geq \mu t] \leq t^\alpha e^{2-pt^\beta}.
    \end{align*}
    If $\alpha=\beta$, then for all $t>0$,
    \begin{align*}
      \P[X\geq \mu t] \leq e^{1-pt^\beta}.
    \end{align*}
    
  \end{thm}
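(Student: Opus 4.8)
The plan is to turn the hypothesis $X^*\utailprec{p}X$ into a differential inequality for the tail of $X^*$, solve it by a Grönwall argument, and then pass back to a tail bound for $X$. Throughout write $\bar F(t)=\P[X>t]$, $\Phi(t)=\P[X^*>t]$, $A(t)=\E[X^\beta\1_{X>t}]$ and $B(t)=\E[X^{\beta-\alpha}\1_{X>t}]$. The first step is a direct computation: since $X^*\eqd V_\alpha X^{(\beta)}$ with $\P[V_\alpha>s]=(1-s^\alpha)_+$, conditioning on $X^{(\beta)}$ and then applying Definition~\ref{bbias} to the functions $x\mapsto\1_{x>t}$ and $x\mapsto x^{-\alpha}\1_{x>t}$ gives
\begin{align*}
  \Phi(t)=\frac{A(t)-t^\alpha B(t)}{\E X^\beta},
\end{align*}
and the same computation shows $X^*$ has density $w\mapsto\alpha w^{\alpha-1}B(w)/\E X^\beta$, so $\Phi$ is absolutely continuous with $\Phi'(t)=-\alpha t^{\alpha-1}B(t)/\E X^\beta$. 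Eliminating $B$ through $t^\alpha B(t)=A(t)-\E X^\beta\,\Phi(t)$ and multiplying by the integrating factor $t^{-\alpha}$ gives $\frac{d}{dt}\bigl(t^{-\alpha}\Phi(t)\bigr)=-\alpha A(t)/(\E X^\beta\,t^{\alpha+1})$, whence, integrating from $t$ to $\infty$,
\begin{align*}
  \Phi(t)=\frac{\alpha t^\alpha}{\E X^\beta}\int_t^\infty\frac{A(w)}{w^{\alpha+1}}\,dw.
\end{align*}

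Next I would produce the differential inequality. Substituting $B(t)=\bigl(A(t)-\E X^\beta\,\Phi(t)\bigr)/t^\alpha$ into the formula for $\Phi'$, and using the elementary bound $A(t)\ge t^\beta\bar F(t)$ together with the hypothesis in the form $\bar F(t)\ge p\,\Phi(t)$, gives
\begin{align*}
  \Phi'(t)\le\Bigl(\frac{\alpha}{t}-\frac{\alpha p\,t^{\beta-1}}{\E X^\beta}\Bigr)\Phi(t);
\end{align*}
when $\alpha=\beta$ one has $B(t)=\bar F(t)$ exactly, the term $\alpha/t$ disappears, and $\Phi'(t)\le-p\beta t^{\beta-1}\Phi(t)/\E X^\beta$. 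Using $\mu^\beta=\frac\beta\alpha\E X^\beta$ and integrating the logarithmic derivative—from $t_0=\mu$ when $\alpha\ne\beta$, and from $0$ (where $\Phi(0)=1$) when $\alpha=\beta$, with the convention that the bounds below hold trivially past any point where $\Phi$ vanishes—yields
\begin{align*}
  \Phi(t)\le\Bigl(\tfrac t\mu\Bigr)^{\alpha}e^{\,p-pt^\beta/\mu^\beta}\ \ (\alpha\ne\beta),\qquad\Phi(t)\le e^{-pt^\beta/\mu^\beta}\ \ (\alpha=\beta).
\end{align*}

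The last step recovers a bound on $\bar F$. Because $\bar F$ is nonincreasing, for $0<t<s$ the integral formula for $\Phi$ (using $A(w)\ge w^\beta\bar F(w)$ and $\bar F(w)\ge\bar F(s)$ for $w\le s$), respectively the density of $X^*$ when $\alpha=\beta$, gives
\begin{align*}
  \Phi(t)\ge\frac{\alpha t^\alpha\bar F(s)}{\E X^\beta}\int_t^{s}w^{\beta-\alpha-1}\,dw=\frac{\alpha t^\alpha\bar F(s)}{\E X^\beta}\cdot\frac{\lvert s^{\beta-\alpha}-t^{\beta-\alpha}\rvert}{\lvert\beta-\alpha\rvert}\quad(\alpha\ne\beta),
\end{align*}
and $\Phi(t)\ge\bar F(s)(s^\beta-t^\beta)/\E X^\beta$ when $\alpha=\beta$. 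Solving for $\bar F(s)$, setting $s=\mu T$, $t=\mu\tau$ and then $v=\tau^\beta$, $V=T^\beta$, $\rho=1-\alpha/\beta<1$, and inserting the bound on $\Phi(t)$, the case $\alpha\ne\beta$ reduces to exhibiting $v\in[1,V)$ with $\bar F(\mu T)\le\lvert\rho\rvert\,e^{\,p}e^{-pv}/\lvert v^\rho-V^\rho\rvert\le T^\alpha e^{\,2-pT^\beta}$. Choosing $v=V-1$ (legitimate once $V\ge2$), the mean value theorem gives $\lvert v^\rho-V^\rho\rvert=\lvert\rho\rvert\,\xi^{\rho-1}$ for some $\xi\in(V-1,V)$, so $\bar F(\mu T)\le e^{p}e^{-pv}\xi^{1-\rho}\le e^{2p}V^{\alpha/\beta}e^{-pV}\le T^\alpha e^{\,2-pT^\beta}$ (using $\xi<V$, $1-\rho=\alpha/\beta>0$, and $p\le1$); for $1\le V<2$ the right side is at least $1$, so nothing is needed. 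When $\alpha=\beta$ the same manipulations give $\bar F(\mu T)\le e^{-pv}/(V-v)$ for $v\in[0,V)$; taking $v=V-1/p$ when $V\ge1/p$ gives $pe^{\,1-pV}\le e^{\,1-pV}$, while for $V<1/p$ one has $pV<1$ and hence $\bar F(\mu T)\le1\le e^{\,1-pV}$. A limit $T'\uparrow T$ upgrades $\P[X>\mu T]$ to $\P[X\ge\mu T]$.

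The crux is this last step: $\Phi=\P[X^*>\proc\,]$ is a smoothed, integrated version of $\bar F$, so inverting the relation necessarily loses something, and that loss is exactly what forces the extra factor $t^\alpha$ and the constant $e^2$ in the $\alpha\ne\beta$ bound—concretely, the $\alpha/t$ term in the differential inequality is what prevents integrating it from $0$. A secondary point is treating $\alpha>\beta$ and $\alpha<\beta$ uniformly: the naive estimate $B(t)\le t^{\beta-\alpha}\bar F(t)$ points the wrong way when $\alpha>\beta$, which is why the argument routes through $A(t)\ge t^\beta\bar F(t)$ rather than directly through $B$.
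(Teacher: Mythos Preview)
Your argument is correct and gives exactly the constants in the statement, but it is genuinely different from the paper's proof. The paper treats the three regimes separately. For $\alpha\ne\beta$ it first rescales via \thref{lem:scaling} to $|\beta-\alpha|=1$, bounds the mean residual life of $X$ (when $\beta>\alpha$) or of $-X^{-1}$ (when $\beta<\alpha$) by \thref{lem:mrls}, compares $X$ to an explicit reference variable $Z$ in the increasing convex order, and then extracts a tail bound by testing against $\varphi(x)=\max(0,(x-t+\gamma)/\gamma)$. For $\alpha=\beta$ it uses Brown's record-value idea: it shows $X^{(\beta)}\utailprec{p} Z(X)$ with $Z(u)=[X\mid X\ge u]$ and combines this with the estimate $\P[Z(X)\ge t]\le \P[X\ge t](1-\log\P[X\ge t])$.

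Your route bypasses all of this: the closed form $\Phi(t)=(A(t)-t^\alpha B(t))/\E X^\beta$ and its derivative turn the hypothesis $\bar F\ge p\Phi$ directly into a Gr\"onwall inequality for $\Phi$, and the same integral formula is reused to invert $\Phi$ back to $\bar F$. This handles $\alpha<\beta$, $\alpha>\beta$, and $\alpha=\beta$ uniformly, without the scaling reduction, without mean-residual-life comparisons, and without the record-value machinery; the only case distinction is whether the $\alpha/t$ term survives. The cost is that your method is slightly less transparent about \emph{why} the extra $t^\alpha$ appears when $\alpha\ne\beta$---in the paper it comes from the explicit shape of the comparison variable $Z$, whereas for you it emerges from having to start the integration at $t_0=\mu$ rather than $0$---but you identify this correctly in your closing remarks. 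One small point: your MVT step uses $\xi^{1-\rho}<V^{1-\rho}$, which is fine since $1-\rho=\alpha/\beta>0$ in both the $\rho>0$ and $\rho<0$ cases; it may be worth saying explicitly that this is why the sign of $\rho$ does not matter there.
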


  \begin{thm}\thlabel{thm:concentration.lower}
    Suppose $X^*\ltailprec{p} X$ for some $p\in(0,1]$ and $\alpha,\beta>0$. 
    Let $\mu=\bigl(\frac{\beta}{\alpha}\E X^\beta\bigr)^{1/\beta}$. Then
    \begin{align}\label{eq:ltail}
      \P[X\leq \mu t] \leq  
            \biggl(\frac{\alpha}{\beta}\biggr)^{1-\alpha/\beta}\frac{t^\alpha}{\frac{\alpha}{\beta}p- t^\beta} ,
    \end{align}
    so long as $t^\beta<p\alpha/\beta$. If $\alpha=\beta$, an improved bound holds for all $t\geq 0$:
    \begin{align}\label{eq:ltail.alpha=beta}
      \P[X\leq \mu t] \leq \frac{t^\alpha}{p}.
    \end{align}
  \end{thm}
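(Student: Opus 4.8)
The plan is to turn the hypothesis into a one–variable inequality for $\P[X\le s]$ by first recording how $X^{*}$ acts on test functions. Since $V_\alpha$ has density $\alpha v^{\alpha-1}$ on $[0,1]$, for every $y>0$ we have $\E[g(V_\alpha y)]=y^{-\alpha}\int_0^y g(u)\,\alpha u^{\alpha-1}\,du$, and feeding this into Definition~\ref{bbias} through $X^{*}\eqd V_\alpha X^{(\beta)}$ gives
\[
  \E[g(X^{*})]=\frac{\alpha}{\E X^\beta}\,\E\!\left[X^{\beta-\alpha}\!\int_0^X g(u)\,u^{\alpha-1}\,du\right].
\]
Taking $g=\1_{[0,s]}$ collapses the inner integral to $\min(X,s)^\alpha/\alpha$ and yields the workhorse identity
\[
  \P[X^{*}\le s]=\frac{1}{\E X^\beta}\,\E\bigl[X^{\beta-\alpha}\min(X^\alpha,s^\alpha)\bigr]
  =\frac{1}{\E X^\beta}\Bigl(\E\bigl[X^\beta\1\{X\le s\}\bigr]+s^\alpha\,\E\bigl[X^{\beta-\alpha}\1\{X>s\}\bigr]\Bigr).
\]

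Next I would feed this into $p\,\P[X\le s]\le \P[X^{*}\le s]$. Write $q=\P[X\le s]$ and $m=\E X^\beta$. The first summand is at most $s^\beta q$, simply because $X^\beta\le s^\beta$ on $\{X\le s\}$ — the point being that the probability appearing there is $q$ itself, and it is this self-referential feature that manufactures the $\tfrac\alpha\beta p-t^\beta$ in the denominator of \eqref{eq:ltail}. When $\alpha<\beta$, H\"older's inequality with exponents $\tfrac{\beta}{\beta-\alpha}$ and $\tfrac\beta\alpha$ controls the second summand: $\E[X^{\beta-\alpha}\1\{X>s\}]\le(\E X^\beta)^{(\beta-\alpha)/\beta}(\P[X>s])^{\alpha/\beta}\le m^{1-\alpha/\beta}$. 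Hence $pqm\le s^\beta q+s^\alpha m^{1-\alpha/\beta}$, so $q\,(pm-s^\beta)\le s^\alpha m^{1-\alpha/\beta}$; as long as $s^\beta<pm$ I may divide, and substituting $m=\tfrac\alpha\beta\mu^\beta$ and $s=\mu t$ reproduces \eqref{eq:ltail} together with its range $t^\beta<\tfrac\alpha\beta p$. (For $\alpha<\beta$ one can also skip the self-improving step: writing $\min(1,u^\alpha)=\min(1,u^\beta)^{\alpha/\beta}$ and using concavity of $r\mapsto r^{\alpha/\beta}$ together with $\E[(X^{(\beta)})^{-\beta}]=1/m$ gives $\P[X^{*}\le s]\le(s^\beta/m)^{\alpha/\beta}$, an even cleaner bound of the same order.)

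The case $\alpha=\beta$ is both simpler and sharper: the weight $X^{\beta-\alpha}$ is trivial, so $\P[X^{*}\le s]=\tfrac1m\E[\min(X^\beta,s^\beta)]\le s^\beta/m$ outright, with no self-improving step; since here $m=\mu^\beta$ this is exactly $t^\alpha$, and dividing by $p$ gives \eqref{eq:ltail.alpha=beta} for all $t\ge 0$.

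The step I expect to fight with is the tail-term estimate when $\alpha>\beta$. Then $\beta-\alpha<0$, the H\"older exponents above are inadmissible, and in fact the clean bound $\E[X^{\beta-\alpha}\1\{X>s\}]\le m^{(\beta-\alpha)/\beta}$ fails in general (a two-point example of $X$ violates it, via AM--HM), so the argument just given does not transplant. The crude pointwise bound $X^{\beta-\alpha}\le s^{\beta-\alpha}$ on $\{X>s\}$ still gives a legitimate bound, but only of order $t^\beta$; recovering the stated $t^\alpha$-order bound appears to need the hypothesis $\P[X^{*}\le t]\ge p\,\P[X\le t]$ at all thresholds $t$, not just at $s$. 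Here I would integrate by parts in the workhorse identity to convert it into an integral inequality for $F(t)=\P[X\le t]$ and then bootstrap it (a Gr\"onwall-type step), and I would expect most of the genuine work — together with the bookkeeping needed to keep every division inside the regime $s^\beta<pm$ — to sit in that argument.
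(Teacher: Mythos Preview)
Your derivation is essentially the paper's: both compute
\[
\P[X^{*}\le s]=\frac{1}{\E X^\beta}\Bigl(\E\bigl[X^\beta\1\{X\le s\}\bigr]+s^\alpha\,\E\bigl[X^{\beta-\alpha}\1\{X>s\}\bigr]\Bigr),
\]
bound the first summand by $s^\beta\P[X\le s]$ to create the self-referential term, bound the second by $(\E X^\beta)^{1-\alpha/\beta}$, and rearrange. The paper writes that last step as ``by Jensen's inequality, $\E X^{\beta-\alpha}\le(\E X^\beta)^{(\beta-\alpha)/\beta}$'' rather than via your H\"older formulation, but the content is the same. The $\alpha=\beta$ case is handled identically.

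Your worry about $\alpha>\beta$ is well placed --- and in fact the paper does not resolve it. The paper invokes Jensen for $\E X^{\beta-\alpha}\le(\E X^\beta)^{(\beta-\alpha)/\beta}$ without comment, but when $\alpha>\beta$ the exponent $(\beta-\alpha)/\beta$ is negative, $y\mapsto y^{(\beta-\alpha)/\beta}$ is convex, and Jensen goes the wrong way (and indeed $\E X^{\beta-\alpha}$ need not even be finite). No Gr\"onwall-type argument or other substitute is supplied; the paper's proof of \eqref{eq:ltail} is complete only for $\alpha\le\beta$. So you have matched the paper's argument exactly and correctly diagnosed a gap that the paper itself leaves open.
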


  The factor $\mu$ used to rescale $X$ in these theorems is equal to $1$ when
  $X$ has the $\GG(\alpha,\beta)$ distribution.
  
  We mention that the case $\alpha=\beta=p=1$ case of \thref{thm:concentration} was already proven
  by Mark Brown \cite[Theorem~3.2]{Brown}; see Section~\ref{sec:beta=alpha}.
  
  As an application of these concentration theorems, we prove several results
  for graphs, walks, trees and branching processes.  We next define several quantities from such models and show that the above inequalities apply.
  
{\bf Preferential attachment random graphs}. Consider a preferential attachment random graph model (see \cite{Barabasi1999} and \cite{PRR})
that  starts with an initial ``seed graph'' consisting of one node, or a collection of nodes grouped together, having total weight $w$.
Additional nodes are added sequentially and when a node is added it attaches $l$ edges, one at a time, directed
{from} it {to} either itself or to nodes in the existing graph
according to the following rule: each edge attaches to a potential node with chance proportional to that node's weight right before the moment of attachment,
where incoming edges contribute weight one to a node and each node other than the initial node when added has initial weight one.
The case where $l=1$ is the usual Barabasi-Albert tree with loops but started from a node with initial weight $w$.
Let $W$ be the total weight of the initial ``seed graph'' after an additional~$n$~edges have been added to the graph. 

{\bf Random binary rooted plane trees}. Let $U$ be the number of vertices in the minimal spanning tree spanned by the root and~$k$ randomly chosen distinct leaves of a uniformly chosen binary, rooted plane tree with~$2n-1$ nodes, that is, with~$n$ leaves and~$n-1$ internal nodes.
  
 {\bf Random walk local times}.  Consider the  one-dimensional simple symmetric random walk~$S_n = (S_n(0),\dots,S_n(n))$ of length~$n$ starting at the origin. Define
$$L_n = \sum_{i=0}^{n} 1\{S_n (i) =0\}$$
to be the number of times the random walk visits the origin by time~$n$. Let
$$L^b_{2n}  \sim [L_{2n} \mid S_{2n}(0)= S_{2n}(2n)=0]
$$
be the local time of a random walk bridge. Here we use the notation $[X\mid E]$ to denote
the distribution of a random variable $X$ conditional on an event $E$ with nonzero probability.
 
The next result shows that the above concentration inequalities hold for these quantities.
\begin{thm}\thlabel{thm:graphs.walks.trees}
With the above definitions, \begin{enumerate}[(a)]
\item $W^*\preceq W$    with $\alpha=w,\beta=l+1$
\item
$U^*\preceq U$  with $\alpha=2k,\beta=2$
\item 
$(L_n)^*\preceq L_n$  with $\alpha=1, \beta =2$
\item 
$(L^b_{2n} )^*\preceq L^b_{2n} $  with $\alpha=2, \beta=2$\end{enumerate}
and the conditions and conclusions of Theorem~\ref{thm:concentration} and Theorem~\ref{thm:concentration.lower} hold for $W$, $V$, $L_n$, and $ L^b_{2n}$ with $p=1$ and the corresponding values in (a)-(d) for $\alpha, \beta$.

 \end{thm}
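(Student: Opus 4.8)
The plan is to establish each of the four relations $X^* \preceq X$ (for $X \in \{W, U, L_n, L^b_{2n}\}$ with the stated $\alpha,\beta$) separately, by producing an explicit monotone coupling of $X^{(\beta)}$, scaling it down by an independent $V_\alpha$, and checking that the result is a.s.\ at most a copy of $X$; once each such relation is in hand, the second claim is immediate, since $X^* \preceq X$ is exactly the $p=1$ case of the hypothesis $X^* \utailprec{1} X$ and $X^* \ltailprec{1} X$, so Theorems~\ref{thm:concentration} and~\ref{thm:concentration.lower} apply verbatim. I would organize the section around one lemma per model, each reducing to a combinatorial or probabilistic description of the $\beta$-power bias transform.

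For part (a), the key point is that for the preferential attachment weight $W$ after $n$ edges, the $(l{+}1)$-power bias transform has a clean interpretation: $W^{(l+1)}$ is distributed as $W$ built from a seed of weight $w{+}l{+}1$ rather than $w$ (this is the content of the fixed-point analysis in \cite{PRR}), or equivalently one can run the attachment process with a tagged structure. Concretely I would couple by running the process twice on the same randomness: the tagged (biased) version starts with extra weight, and at each edge-arrival the biased seed is at least as likely to be chosen, so a coupling can be constructed in which the biased weight exceeds the unbiased weight after each step, and then multiplying by $V_\alpha \le 1$ can only decrease it. The same template covers (b): for the binary plane tree, $U^{(2)}$ should correspond to the spanning-tree size when one of the $k$ leaves is chosen size-biased (or an extra leaf is planted), and the recursive structure of uniform binary plane trees under the natural edge-by-edge exploration gives a monotone coupling with $V_\alpha$-scaling built in.

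For (c) and (d), I would use the fact that $(L_n)^{(2)}$ and $(L^b_{2n})^{(2)}$ — the square-bias transforms of the local times — admit combinatorial descriptions via a "second moment" pairing of two independent visits to the origin, which by the Markov/reflection structure of the walk decomposes into a bridge or meander between the two tagged visits plus independent pieces; the scaling by $V_\alpha$ (with $\alpha = 1$ or $2$) accounts for the uniformly random position of the tagged visit among all visits. In each case the target inequality $V_\alpha X^{(2)} \preceq X$ should reduce to: a walk/bridge conditioned to have a long excursion-free stretch still has at most as many returns as the original, which follows from a path-surgery coupling that deletes the tagged excursion-free block.

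The main obstacle I expect is part (a): unlike the walk models, the preferential attachment weight has a nontrivial dependence structure across the $n$ edge-arrivals, so verifying that the coupled biased process stays pointwise above the unbiased one at \emph{every} step — not merely in distribution at the end — requires care with the Pólya-urn-type representation of the weights. I would handle this by representing both processes jointly through the same sequence of Beta/Dirichlet split variables, so that the extra initial weight $l{+}1$ in the biased copy propagates monotonically, and then confirm that the identification $W^{(l+1)} \eqd (\text{process from seed } w{+}l{+}1)$ — hence the role of $\alpha = w$, $\beta = l{+}1$ — matches Definition~\ref{bbias}. Once that identification and the step-by-step domination are pinned down, the remaining three parts follow the same pattern with strictly easier path-surgery arguments.
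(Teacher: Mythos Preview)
Your plan has a genuine gap in part~(a), and it propagates to the other parts since they all reduce to the same urn structure.

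First, the identification you rely on is for the wrong bias transform. What is actually true (and is Lemma~4.2 of \cite{PRR}) is that the \emph{rising factorial} bias satisfies $N_n^{[r]}(b,w)+r \eqd N_n(b,w+r)$; it is $W^{[l+1]}$, not the power bias $W^{(l+1)}$, that corresponds to starting the urn with $l{+}1$ extra units of seed weight. The distributional fixed point in \cite{PRR} uses this factorial bias, and the passage from $W^{[l+1]}$ to $W^{(l+1)}$ is precisely the hard step. In the paper this step is \thref{lem:unfactorialize}, which rests on a delicate log-concavity--type inequality for the urn probabilities (\thref{lem:lc.variant}) proved by a technical induction in Appendix~A. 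Your proposal does not mention this conversion at all, so the argument as written does not reach $W^{(l+1)}$.

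Second, even granting a coupling with $W^{(\text{biased})}\succeq W$, your sentence ``multiplying by $V_\alpha\le 1$ can only decrease it'' does not yield what you need. From $W^{(\beta)}\succeq W$ and $V_\alpha\le 1$ you get $V_\alpha W^{(\beta)}\preceq W^{(\beta)}$, which is in the wrong direction to conclude $V_\alpha W^{(\beta)}\preceq W$. The $V_\alpha$ factor cannot simply be thrown away; it has to emerge from the structure of the model so that it cancels exactly the gain from biasing. In the paper this happens via the embedded classical P\'olya urn: one writes $N_n(1,w)\eqd Q_w(N_{n-l}^{[l+1]}(1,w)+l-w)$ and then uses the pointwise inequality $Q_w(m)\succeq (m+w)V_w$ from \cite{PRR}, which is where the independent $V_w$ with density $wx^{w-1}$ is produced. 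Only after this does one compare the rising factorial bias to the power bias via \thref{lem:unfactorialize}.

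Finally, the paper does not treat the four cases separately: all of $W$, $U$, $L_n$, $L_{2n}^b$ are identified (via results in \cite{PRR}) as instances of the single urn distribution $P_n^l(1,w)$ for suitable parameters, so it suffices to prove one statement, Proposition~\ref{ineq}, and then read off parts (a)--(d). Your separate path-surgery sketches for (c) and (d) might be salvageable on their own, but for (a) and (b) you will not avoid the factorial-versus-power issue above.
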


  We also give two tail bounds for Galton--Watson branching processes.
  See Section~\ref{sec:GW.review} for a review of previous bounds.
    \begin{thm}\thlabel{thm:GW1}
    Let $Z_n$ be the size of the $n$th generation of a Galton--Watson process,
    and let $\mu=\E Z_1$, the mean of its child distribution.
    Consider $Z_n^*$ with $\alpha=\beta=1$. If $Z_1^*\preceq Z_1$,
    then $Z_n^*\preceq Z_n$ for all $n$, and
    \begin{align}
      \P[Z_n\geq t\mu^n] &\leq e^{1-t},\label{eq:GW1.1}\\\intertext{and}
      \P[Z_n\leq t\mu^n] &\leq t\label{eq:GW1.2}
    \end{align}
    for all $t>0$.
  \end{thm}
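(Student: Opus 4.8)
The plan is to reduce the statement to one monotonicity lemma for random sums, iterate it along the branching recursion, and then quote the concentration theorems. Write $X^*=X^e$ for the $(\alpha,\beta)=(1,1)$ equilibrium transform, which has density $t\mapsto\P[X>t]/\E X$. The key identity is: if $Y=\sum_{i=1}^N X_i$ with $N\ge 1$ integer-valued, $0<\E N<\infty$, and $X_1,X_2,\dots$ i.i.d.\ nonnegative, integrable, and independent of $N$, then
\[
  Y^*\ \eqd\ \sum_{i=1}^{\widehat N}X_i+X_1^*,
\]
where $\widehat N$ is independent of the $X_i$ with $\P[\widehat N=m]=\P[N>m]/\E N$, and $X_1^*$ is an independent equilibrium-transformed copy of $X_1$. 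I would prove this by writing the density of $Y^e$ as $\P[Y>t]/\E Y$, telescoping $\P[Y>t]$ over the first partial sum $S_j=X_1+\dots+X_j$ that exceeds $t$, and recognizing each resulting term as $\E X_1$ times the density of $S_{j-1}+X_1^e$ at $t$; the weights $\P[N>m]/\E N$ appear because they sum to $1$.

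The step I expect to be the real obstacle is a ``self-improvement'' of stochastic domination that is available because the $Z_n$ are integer-valued: for integer-valued $V$ one has $V^*\preceq V$ \emph{if and only if} $\widehat V+1\preceq V$, where $\widehat V=\lfloor V^*\rfloor$ has the law above. One direction is routine; the content is that $V^*\preceq V$ a priori gives only $\widehat V=\lfloor V^*\rfloor\preceq\lfloor V\rfloor=V$, which is too weak to absorb an extra i.i.d.\ summand, but the survival function of an integer-valued $V$ is constant on each $[m,m+1)$, so matching survival functions at the integers already forces the stronger $\widehat V+1\preceq V$. Along the way one observes that $V^*\preceq V$ forces $\P[V>0]=1$ (since $V^*=UV^s>0$ a.s.), so the hypothesis $Z_1^*\preceq Z_1$ rules out extinction and gives $Z_n\ge 1$ a.s.\ for all $n$, which legitimizes the integer-valued reasoning at every level.

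Combining these yields the master claim: \emph{if $N^*\preceq N$ and $X_1^*\preceq X_1$, then $Y^*\preceq Y$.} By the identity, $Y^*\eqd\sum_{i=1}^{\widehat N}X_i+X_1^*\preceq\sum_{i=1}^{\widehat N}X_i+X_{\widehat N+1}=\sum_{i=1}^{\widehat N+1}X_i$, using $X_1^*\preceq X_1$ and independence; and $\sum_{i=1}^{\widehat N+1}X_i\preceq\sum_{i=1}^{N}X_i=Y$, using $\widehat N+1\preceq N$ (which is $N^*\preceq N$ after the self-improvement) and the monotonicity of a random i.i.d.\ sum in its number of nonnegative summands. Applying this with $N=Z_n$, with $X_1,X_2,\dots$ i.i.d.\ from the child distribution (so $X_1^*\preceq X_1$ is exactly the hypothesis $Z_1^*\preceq Z_1$), and with the recursion $Z_{n+1}=\sum_{i=1}^{Z_n}X_i$ (the $X_i$ independent of $Z_n$), an induction on $n$ whose base case is the hypothesis gives $Z_n^*\preceq Z_n$ for all $n$.

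The bounds \eqref{eq:GW1.1} and \eqref{eq:GW1.2} are then immediate: take $\alpha=\beta=1$, $p=1$, and $X=Z_n$ in Theorems~\ref{thm:concentration} and~\ref{thm:concentration.lower}, noting that the rescaling factor there equals $\bigl(\tfrac{\beta}{\alpha}\E X^{\beta}\bigr)^{1/\beta}=\E Z_n=\mu^{n}$; the $\alpha=\beta$ cases of those theorems read precisely $\P[Z_n\ge t\mu^{n}]\le e^{1-t}$ and $\P[Z_n\le t\mu^{n}]\le t$ for all $t>0$.
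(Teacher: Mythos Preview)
Your proposal is correct and follows essentially the same route as the paper: you derive the equilibrium-of-a-random-sum identity (the paper obtains it by combining its fixed-sum formula with the mixture lemma, arriving at the equivalent representation $S^*\eqd X_1+\cdots+X_{L^e-1}+X_{L^e}^*$, which is your $\sum_{i=1}^{\widehat N}X_i+X_1^*$ since $\widehat N=L^e-1$), use $X_1^*\preceq X_1$ and then $L^e\preceq L$ to conclude $S^*\preceq S$, and induct along the branching recursion before invoking the $\alpha=\beta=1$ concentration theorems. The only cosmetic differences are that the paper runs the induction via $Z_{n+1}=\sum_{j=1}^{L}Z_n^{(j)}$ rather than $Z_{n+1}=\sum_{i=1}^{Z_n}X_i$, and it treats your ``self-improvement'' ($V^*\preceq V\Leftrightarrow V^e\preceq V$ for integer-valued $V$) as a one-line remark rather than an obstacle.
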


  \thref{thm:GW1} requires the child distribution to be supported on $\{1,2,\ldots\}$, since
  the condition $Z_1^*\preceq Z_1$ fails if $\P[Z_1=0]>0$. The next result relaxes this requirement and allows
  us to consider Galton--Watson trees with a nonzero extinction probability,
  with our concentration result applying conditional on nonextinction.
  We impose a condition on the child distribution that comes from
  reliability theory.
  For a random variable $X$ taking values in the positive integers, we say that $X$
  is \emph{D-IFR} (which stands for \emph{discrete increasing failure rate}) if
  $\P[X=k\mid X\geq k]$ is increasing in $k$ for $k\geq 1$. 
  As we will discuss in Section~\ref{sec:reliability}, if $X$ is D-IFR then
  $X^*\preceq X$ with $\alpha=\beta=1$.
  
  In the following theorem and onward, for a random variable $X$ we use
  $X^>$ to denote a random variable with distribution $[X\mid X>0]$.
  
  \begin{thm}\thlabel{thm:GW2}
    Let $Z_n$ be the size of the $n$th generation of a Galton--Watson tree,
    and suppose that $Z_1^>$ is D-IFR. Then $Z_n^*\preceq Z_n^>$ with $\alpha=\beta=1$, and with
    $m(n)=\E[Z_n\mid Z_n>0]$,
    \begin{align}
      \P[Z_n\geq tm(n)\mid Z_n>0] &\leq e^{1-t},\label{eq:GW2.upper}\\\intertext{and}
      \P[Z_n\leq tm(n)\mid Z_n>0] &\leq t\label{eq:GW2.lower}
    \end{align}
    for all $t>0$.
  \end{thm}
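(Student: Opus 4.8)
The plan is to reduce the two displayed inequalities to \thref{thm:concentration} and \thref{thm:concentration.lower}, applied to $X=Z_n^>$ with $\alpha=\beta=1$ and $p=1$. With $p=1$ the hypotheses of both theorems become $X^{*}\preceq X$, and the rescaling constant is $\mu=\bigl(\tfrac{\beta}{\alpha}\E[(Z_n^>)^{\beta}]\bigr)^{1/\beta}=\E Z_n^>=\E[Z_n\mid Z_n>0]=m(n)$. So, granting $Z_n^{*}\preceq Z_n^>$, the $\alpha=\beta$ branch of \thref{thm:concentration} gives $\P[Z_n^>\geq m(n)t]\leq e^{1-t}$, which is \eqref{eq:GW2.upper}, and \eqref{eq:ltail.alpha=beta} gives $\P[Z_n^>\leq m(n)t]\leq t$, which is \eqref{eq:GW2.lower}. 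The entire task is therefore to establish $Z_n^{*}\preceq Z_n^>$ with $\alpha=\beta=1$, which I would prove by induction on $n$.

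For the base case $n=1$: the $1$-power bias transform (ordinary size bias) of a nonnegative variable does not see its mass at $0$, so $Z_1^{(1)}\eqd(Z_1^>)^{(1)}$, and hence $Z_1^{*}\eqd V_1 Z_1^{(1)}\eqd V_1(Z_1^>)^{(1)}\eqd(Z_1^>)^{*}$. Since $Z_1^>$ is D-IFR, the reliability-theory fact recalled in Section~\ref{sec:reliability} gives $(Z_1^>)^{*}\preceq Z_1^>$, and therefore $Z_1^{*}\preceq Z_1^>$.

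For the inductive step I would use the branching decomposition $Z_{n+1}=\sum_{j=1}^{Z_1}Y_j$, where $Y_1,Y_2,\dots$ are i.i.d.\ copies of $Z_n$, independent of $Z_1$, together with the standard identity for size-biasing a random sum, $Z_{n+1}^{(1)}\eqd Y_1^{(1)}+\sum_{j=2}^{Z_1^{(1)}}Y_j$, and the relation $Z_{n+1}^{*}\eqd V_1 Z_{n+1}^{(1)}$. The aim is to push the induction hypothesis $Z_n^{*}\preceq Z_n^>$ (for the summands) and the base case $Z_1^{*}\preceq Z_1^>$ (for the number of summands) through a transfer lemma that, for a random sum $S=\sum_{j=1}^{N}Y_j$, controls the law of $S^{*}$ in terms of those of $[Y\mid Y>0]$, $Y^{*}$, $[N\mid N>0]$ and $N^{*}$ --- the random-sum analogue of the lemma underlying \thref{thm:GW1}, but now carrying the conditioning on positivity along at each level. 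This then yields $Z_{n+1}^{*}\preceq Z_{n+1}^>$ and closes the induction.

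The main obstacle is exactly this transfer lemma: the equilibrium transform interacts with random sums in an unfavorable direction for naive arguments --- compounding spreads out the right tail, and an aging property such as D-IFR (or the weaker property $X^{*}\preceq X$ it implies) is not automatically preserved under random summation --- so one cannot simply invoke a closure property of an aging class, and one must in addition arrange the normalizations by the survival probabilities $\P[Z_k>0]$ so that the hypothesis on $Z_n^>$, rather than on $Z_n$, is enough. A clean route is probably to strengthen the induction: carry along a D-IFR--type property of $Z_n^>$ that implies $Z_n^{*}\preceq Z_n^>$ and that is stable under the one-step operation $Z_{n+1}=\sum_{j=1}^{Z_1}Y_j$; making that operation preserve the property is precisely where the hypothesis that the number of summands $Z_1^>$ (and not merely $Z_n^>$) is D-IFR enters.
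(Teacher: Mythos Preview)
Your reduction to \thref{thm:concentration,thm:concentration.lower} and the base case are fine, and you are right that the whole content is a transfer lemma for random sums. But your proposed route for the inductive step---strengthen the induction by carrying a D-IFR--type property of $Z_n^>$---does not work: the paper gives an explicit counterexample (Section~\ref{sec:GW.optimality}) of a log-concave, D-IFR child distribution for which the second generation already fails to be D-IFR. So no induction hypothesis of that sort on $Z_n^>$ is available, and the ``obstacle'' you identify is genuine for the approach you suggest.

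The idea you are missing is a thinning trick that eliminates the need to strengthen the hypothesis on $Z_n$. Write $Z_{n+1}=\sum_{j=1}^{Z_1}Y_j$ with $Y_j$ i.i.d.\ copies of $Z_n$, and let $M$ count the nonzero summands, so $M\sim\Bin(Z_1,p_n)$ with $p_n=\P[Z_n>0]$. Then
\[
  Z_{n+1}\eqd\sum_{k=1}^{M}Y_k^>,\qquad
  Z_{n+1}^>\eqd\sum_{k=1}^{M^>}Y_k^>,
\]
a sum of an \emph{NBUE} number of i.i.d.\ \emph{NBUE} summands (the summands are NBUE by the induction hypothesis $Z_n^*\preceq Z_n^>$). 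By \thref{prop:NBUE.sum} this is NBUE, closing the induction---\emph{provided} $M^>=\Bin(Z_1,p_n)^>$ is NBUE. This is exactly where the D-IFR hypothesis on $Z_1^>$ enters, and only on $Z_1$: one proves (\thref{lem:IFR.thinning}) that if $L^>$ is D-IFR then $\Bin(L,p)^>$ is D-IFR for every $p\in(0,1]$, hence NBUE. The delicate step is this thinning stability of D-IFR, not any property of $Z_n^>$ beyond NBUE; indeed the paper also shows (Section~\ref{sec:GW.optimality}) that the weaker NBUEZT property is \emph{not} preserved under thinning, which is why the stronger D-IFR assumption on $Z_1^>$ is needed.
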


  This theorem holds in subcritical, critical, and supercritical cases alike.
  The difference comes only in the mean $m(n)$, which grows exponentially 
  for a supercritical tree, grows linearly for a critical
  tree, and remains bounded for a subcritical tree.
  
  \thref{thm:graphs.walks.trees,thm:GW1,thm:GW2} apply
  the $p=1$ cases of \thref{thm:concentration,thm:concentration.lower} with the usual stochastic order.
  We use the $p<1$ case in this paper only in \thref{rmk:p<1.use} to sketch a way
  to simplify the proof of \thref{thm:graphs.walks.trees} at the cost of a weaker concentration inequality.
  Nonetheless, we expect that the $p<1$ case will prove useful.
  For the analogous concentration bound in \cite{Cook2018} based on the Poisson distributional
  fixed point $X^s\eqd X+1$, the $p<1$ case is essential to applications
  on random regular graphs (see \cite[Proposition~2.3]{Cook2018} and \cite[Theorem~4.1]{Zhu})
  and on interacting particle systems \cite[Proposition~5.17]{JJLS}.
  
  The tail bounds we produce in this paper are sharp in many circumstances.
  The results of \thref{thm:concentration,thm:concentration.lower} in the $\alpha=\beta$ case
  are sharp, which was known already in the previously proven case $\alpha=\beta=p=1$ for the upper tail
  \cite{BrownSharp}. When $\alpha\neq\beta$, we expect that the factor $t^\alpha$ in the upper tail bound
  is not sharp. We discuss this further in Section~\ref{sec:concentration.sharp}.
  The applications of \thref{thm:concentration,thm:concentration.lower} given
  in \thref{thm:graphs.walks.trees} with $\alpha=\beta$ seem likely to be sharp
  (see \thref{rmk:urn.optimality}), and our results on Galton--Watson processes
  are sharp as well (see Section~\ref{sec:GW.optimality}).
  
  Our main concentration results, \thref{thm:concentration,thm:concentration.lower}, are proven
  in Section~\ref{sec:concentration}.
  In Section~\ref{sec:urns}, we consider an urn model and prove that its counts $N$ satisfy
  $N^*\preceq N$ with $\alpha$ and $\beta$ depending on the model's parameters.
  The random variables $W$, $U$, $L_n$, and $L^b_{2n}$ are expressed in terms of this urn
  model in \cite{PRR}, and \thref{thm:graphs.walks.trees} follows.
  One part of the proof, a regularity property
  for the urn model  similar to log-concavity, is shown by a very technical
  argument in Appendix~A.
  Section~\ref{sec:GW} gives the proofs of \thref{thm:GW1,thm:GW2} on Galton--Watson trees.
  After some background material on reliability theory and on forming the equilibrium transform,
  \thref{thm:GW1} is easy to prove (and in fact is essentially proven in \cite{WDC}).
  The proof of \thref{thm:GW2} is more difficult and requires us to establish some
  delicate properties of the D-IFR class of distributions. In Appendix~B, we
  give some proofs that are well known in reliability theory but
  hard to find in the literature.

  \section{Proof of concentration theorems}\label{sec:concentration}
  Recall that $X\utailprec{p} Y$ and   $X \ltailprec{p} Y$ denote that
  $\P[X>t] \leq \P[Y>t]/p$ and  $\P[X\leq t]\geq p\P[Y\leq t]$, respectively, for all $t\in\RR$.
  We start by characterizing these orders in terms of couplings, along the same lines
  as the standard fact that $X\preceq Y$ if and only if there exists a coupling of $X$
  and $Y$ so that $X\leq Y$ a.s.   We have not seen these stochastic orders defined before, 
  but they are used
  in form~\ref{i:ultail.X>} in \cite{Cook2018} and \cite{DalyJohnson}.
  
  \begin{lemma}\ \thlabel{lem:ultail}
     The following statements are equivalent:
        \begin{enumerate}[(i)]
          \item there exists a coupling $(X,Y)$ for which $\P[X\leq Y\mid X]\geq p$ a.s.;
            \label{i:ultail.X}
          \item there exists a coupling $(X,Y)$ for which $\P[X\leq Y\mid X\geq t] \geq p$ for all
            $t\in\RR$ where the conditional probability is defined;\label{i:ultail.X>}
          \item $X\utailprec{p} Y$;\label{i:ultail.order}
        \end{enumerate}
      as are the following statements:
        \begin{enumerate}[(i)]
          \item there exists a coupling $(X,Y)$ for which $\P[X\leq Y\mid Y]\geq p$ a.s.;
          \item there exists a coupling $(X,Y)$ for which $\P[X\leq Y\mid Y\leq t] \geq p$ for all
            $t\in\RR$ where the conditional probability is defined;
          \item $X\ltailprec{p} Y$.
        \end{enumerate}
  \end{lemma}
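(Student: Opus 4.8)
The plan is to prove the first block of three equivalences as a cycle $\ref{i:ultail.X}\Rightarrow\ref{i:ultail.X>}\Rightarrow\ref{i:ultail.order}\Rightarrow\ref{i:ultail.X}$, and then to deduce the second block either by rerunning the same arguments ``from below'' or, more quickly, by reflecting $X$ and $Y$ through the origin. Two of the three implications in the cycle are bookkeeping with the tower property and monotone limits; the content is the coupling construction that closes it.

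For $\ref{i:ultail.X}\Rightarrow\ref{i:ultail.X>}$, since $\{X\ge t\}\in\sigma(X)$ the tower property gives $\P[X\le Y\mid X\ge t]=\E[\,\P[X\le Y\mid X]\mid X\ge t\,]\ge p$ whenever $\P[X\ge t]>0$. For $\ref{i:ultail.X>}\Rightarrow\ref{i:ultail.order}$, I would fix $t$ and take $s>t$: on $\{X\ge s\}$ the event $\{X\le Y\}$ forces $Y\ge s>t$, so $\P[Y>t]\ge\P[X\le Y,\ X\ge s]\ge p\,\P[X\ge s]$ (trivially if $\P[X\ge s]=0$); letting $s\downarrow t$ and using $\P[X\ge s]\uparrow\P[X>t]$ yields $X\utailprec{p}Y$.

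The heart of the matter is $\ref{i:ultail.order}\Rightarrow\ref{i:ultail.X}$, a $p$-relaxed Strassen coupling. Writing $F,G$ for the c.d.f.s of $X,Y$, the hypothesis $\P[X>t]\le\P[Y>t]/p$ is exactly $G\le H$ pointwise, where $H:=(1-p)+pF$; here $H$ is the c.d.f. of the $[-\infty,\infty)$-valued random variable equal to $X$ with probability $p$ and to $-\infty$ with probability $1-p$, so $G\le H$ says this variable is stochastically below $Y$. I would realize the coupling with a single uniform: take $U\sim\Unif(0,1)$, set $Y:=G^{-1}(U)$, and on $\{U>1-p\}$ set $X:=F^{-1}\bigl(\tfrac{U-(1-p)}{p}\bigr)=H^{-1}(U)$, which satisfies $X\le Y$ since $G\le H$ implies $H^{-1}\le G^{-1}$; on $\{U\le 1-p\}$ resample $X$ from $F$, independently of everything. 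As $\tfrac{U-(1-p)}{p}$ is $\Unif(0,1)$ given $\{U>1-p\}$, one checks $X\sim F$ and $Y\sim G$; and since $X\le Y$ on $\{U>1-p\}$, an event carrying a $p$-fraction of the law of $X$, one obtains $\P[X\le Y,\,X\in dx]\ge p\,F(dx)$, that is, $\P[X\le Y\mid X]\ge p$ a.s.

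For the second block I would run the mirror argument — condition on $Y$ from below rather than on $X$ from above, with $H$ replaced by $pG$ (the c.d.f. of the variable equal to $Y$ with probability $p$ and to $+\infty$ otherwise), resampling $Y$ on the bad event — or, equivalently, apply the first block to $(\widetilde X,\widetilde Y)=(-Y,-X)$: then $\{\widetilde X\le\widetilde Y\}=\{X\le Y\}$, conditioning on $\widetilde X$ is conditioning on $Y$, and $\widetilde X\utailprec{p}\widetilde Y$ unwinds to $\P[X<s]\ge p\,\P[Y<s]$ for all $s$, which is equivalent to $X\ltailprec{p}Y$ after passing between left and right limits of c.d.f.s. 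The delicate step is the coupling in $\ref{i:ultail.order}\Rightarrow\ref{i:ultail.X}$: checking that resampling $X$ on $\{U\le 1-p\}$ restores the correct marginal without spoiling $\P[X\le Y\mid X]\ge p$. The key observation is that this step only adds mass to the joint law, and adding mass cannot decrease the conditional probability $\P[X\le Y\mid X=x]$, which is already at least $p$ from the $\{U>1-p\}$ piece alone.
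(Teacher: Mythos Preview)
Your proof is correct and follows essentially the same route as the paper: the cycle (i)$\Rightarrow$(ii)$\Rightarrow$(iii) is argued identically, and for (iii)$\Rightarrow$(i) both you and the paper introduce the auxiliary $[-\infty,\infty)$-valued variable with c.d.f.\ $H=(1-p)+pF$ and couple it below $Y$---the paper simply invokes Strassen's theorem abstractly, while you write out the quantile coupling explicitly, which is the same construction. Your reflection $(\widetilde X,\widetilde Y)=(-Y,-X)$ for the second block is a clean shortcut; the paper instead reruns the mirror argument directly with $Y'$ equal to $Y$ or $+\infty$.

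One small quibble: your closing remark that ``adding mass cannot decrease the conditional probability $\P[X\le Y\mid X=x]$, which is already at least $p$ from the $\{U>1-p\}$ piece alone'' is phrased backwards---on $\{U>1-p\}$ alone that conditional probability is $1$, and the resampling step lowers it. The correct justification is the one you already gave a sentence earlier, namely $\P[X\le Y,\,X\in dx]\ge \P[U>1-p,\,X\in dx]=p\,F(dx)$ together with $\P[X\in dx]=F(dx)$; that inequality stands on its own and the final sentence can be dropped.
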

  \begin{proof}
    It is clear that \ref{i:ultail.X} implies \ref{i:ultail.X>} in both sets of statements.
    To go from \ref{i:ultail.X>} to \ref{i:ultail.order} in the first set of statements,
    observe that for any $s\in\RR$,
    \begin{align*}
      \P[Y\geq s] &\geq \P[\text{$X\leq Y$ and $X\geq s$}]
         \geq p\P[X\geq s],
    \end{align*}
    applying \ref{i:ultail.X>} in the second inequality.
    Now let $s$ approach $t$ downward to prove \ref{i:ultail.order}.
    A similar argument proves that \ref{i:ultail.X>} implies \ref{i:ultail.order}
    for the second set of statements.
    
    Now we show that \ref{i:ultail.order}$\implies$\ref{i:ultail.X} for the first set of statements.
    Let $B\sim\Ber(p)$ be independent of $X$, and define a random variable $X'$ taking values
    in $[-\infty,\infty)$ by
    \begin{align*}
      X' &= \begin{cases}
        X &\text{if $B=1$,}\\
        -\infty & \text{if $B=0$.}
      \end{cases}
    \end{align*}
    Then
    \begin{align*}
      \P[X'>t] &= p\P[X>t]\leq \P[Y>t]
    \end{align*}
    by\ref{i:ultail.order}.
    Thus $X'$ is stochastically dominated by $Y$ in the standard sense, and therefore
    there exists a coupling
    of $X'$ and $Y$ such that $X'\leq Y$ a.s. 
    (The standard fact that $\P[U>t]\leq P[V>t]$ for all $t$ is equivalent to the existence
    of a coupling for which $U\leq V$ a.s.\ holds when $U$ and $V$ take values in
    $[-\infty,\infty)$, and in fact under considerably more general conditions \cite[Theorem~11]{Strassen}.)
    Under this coupling, given $X$, it holds
    with probability at least $p$ that $X=X'$, in which case $X\leq Y$. Thus $X\utailprec{p} Y$
    implies \ref{i:ultail.X}.
    
    To show that \ref{i:ultail.order}$\implies$\ref{i:ultail.X} for the second set of statements, 
    we use the same idea but define
    $Y'$ to be equal to $Y$ with probability $p$ and equal to $\infty$ with probability $1-p$.
    Then $X\preceq Y'$, yielding a coupling of $X$ and $Y'$ such that $X\leq Y'$ a.s., and
    under this coupling given $Y$ we have $Y=Y'\geq X$ with probability at least $p$.     
  \end{proof}

  Before we get started with our concentration estimates, we make an observation
  that allows us to rescale $\alpha$ and $\beta$.
  \begin{lemma}\thlabel{lem:scaling}
    Let $\alpha$, $\beta$, and $\gamma$ be positive real numbers.
    If $X^*$ has the $(\alpha,\beta)$-generalized equilibrium distribution of $X$, then
    $(X^*)^\gamma$ has the $\bigl(\frac{\alpha}{\gamma},\frac{\beta}{\gamma}\bigr)$-generalized 
    equilibrium distribution of $X^\gamma$.
  \end{lemma}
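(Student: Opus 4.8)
The plan is to unwind the generalized equilibrium distribution into its two ingredients — the $\beta$-power bias transform and the independent multiplier $V_\alpha$ — and track how each transforms under the map $x\mapsto x^\gamma$.

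First I would establish the power-bias part: for nonnegative $X$ with $0<\E[X^\beta]<\infty$, the variable $(X^{(\beta)})^\gamma$ has the same law as $(X^\gamma)^{(\beta/\gamma)}$. This is a one-line computation from Definition~\ref{bbias}. Since $\E[(X^\gamma)^{\beta/\gamma}]=\E[X^\beta]\in(0,\infty)$, the transform $(X^\gamma)^{(\beta/\gamma)}$ is well defined, and for every bounded measurable $g$, applying the defining identity of $X^{(\beta)}$ with test function $f(x)=g(x^\gamma)$ gives $\E[X^\beta]\,\E[g((X^{(\beta)})^\gamma)]=\E[X^\beta g(X^\gamma)]=\E[(X^\gamma)^{\beta/\gamma}g(X^\gamma)]$, which is precisely the identity characterizing $(X^\gamma)^{(\beta/\gamma)}$.

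Next I would handle the multiplier. Since $V_\alpha$ has distribution function $t^\alpha$ on $[0,1]$, the variable $V_\alpha^\gamma$ has distribution function $\P[V_\alpha\leq t^{1/\gamma}]=t^{\alpha/\gamma}$ on $[0,1]$, so $V_\alpha^\gamma\eqd V_{\alpha/\gamma}$.

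Finally I would combine the two. Writing $X^*\eqd V_\alpha X^{(\beta)}$ with $V_\alpha$ independent of $X^{(\beta)}$, raising both sides to the power $\gamma$ and using $(ab)^\gamma=a^\gamma b^\gamma$ for $a,b\geq0$ gives $(X^*)^\gamma\eqd V_\alpha^\gamma\,(X^{(\beta)})^\gamma\eqd V_{\alpha/\gamma}\,(X^\gamma)^{(\beta/\gamma)}$, where the independence persists because $V_\alpha^\gamma$ is a function of $V_\alpha$ and $(X^{(\beta)})^\gamma$ is a function of $X^{(\beta)}$. By the definition of the generalized equilibrium distribution, this says exactly that $(X^*)^\gamma$ has the $\bigl(\frac{\alpha}{\gamma},\frac{\beta}{\gamma}\bigr)$-generalized equilibrium distribution of $X^\gamma$. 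I do not anticipate a genuine obstacle here; the only point requiring care is the bookkeeping in the power-bias step and confirming the moment condition $0<\E[(X^\gamma)^{\beta/\gamma}]<\infty$, which holds because that expectation equals $\E[X^\beta]$.
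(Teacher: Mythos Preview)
Your argument is correct and follows exactly the paper's approach: the paper also decomposes $X^*\eqd V_\alpha X^{(\beta)}$ and records the two identities $V_\alpha^\gamma\eqd V_{\alpha/\gamma}$ and $(X^{(\beta)})^\gamma\eqd (X^\gamma)^{(\beta/\gamma)}$, leaving the verifications you spell out as observations.
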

  \begin{proof}
    For $a>0$, let $V_a$ denote a random variable with density $ax^{a-1}\,dx$, and recall
    that $X^*\eqd V_\alpha X^{(\beta)}$. Now observe that $V_\alpha^\gamma\eqd V_{\alpha/\gamma}$
    and $(X^{(\beta)})^\gamma\eqd(X^\gamma)^{(\beta/\gamma)}$.
  \end{proof}

  \subsection{Upper tail bounds for $\alpha\neq\beta$}
  We start with a technical lemma.
  \begin{lemma}\thlabel{lem:int.bound}
    Suppose that $X^*\utailprec{p} X$ and $\E X^\beta=\alpha/\beta$.
    Let $G(t)=\P[X>t]$.
    For all $t>0$,
    \begin{align}\label{eq:int.bound}
      \int_0^1 u^{\alpha-\beta-1}G(t/u)\,du \leq \frac{G(t)}{p\beta t^\beta}.
    \end{align}
  \end{lemma}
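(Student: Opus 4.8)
The plan is to compute $\P[X^*>t]$ explicitly, bound it from below by $\beta t^\beta$ times the integral on the left of \eqref{eq:int.bound}, and then read off \eqref{eq:int.bound} from the hypothesis $X^*\utailprec{p}X$, which says exactly that $\P[X^*>t]\le G(t)/p$.

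First I would unwind the definition of $X^*$. Since $X^*\eqd V_\alpha X^{(\beta)}$ with $V_\alpha$ independent of $X^{(\beta)}$ and having density $\alpha u^{\alpha-1}$ on $[0,1]$, conditioning on $V_\alpha$ gives $\P[X^*>t]=\int_0^1\alpha u^{\alpha-1}\P[X^{(\beta)}>t/u]\,du$. The hypothesis $\E X^\beta=\alpha/\beta$ makes the $\beta$-power bias transform well defined (here $0<\E X^\beta<\infty$), and applying its defining identity to the bounded measurable function $\1_{(s,\infty)}$ yields $\P[X^{(\beta)}>s]=\frac{\beta}{\alpha}\E[X^\beta\1\{X>s\}]$ for every $s$. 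Substituting,
\[
  \P[X^*>t]=\beta\int_0^1 u^{\alpha-1}\,\E\bigl[X^\beta\1\{X>t/u\}\bigr]\,du .
\]

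The only real step is then the pointwise lower bound $\E[X^\beta\1\{X>t/u\}]\ge (t/u)^\beta\,\P[X>t/u]=(t/u)^\beta G(t/u)$, which holds because $X^\beta\ge(t/u)^\beta$ on the event $\{X>t/u\}$ (note $t>0$ and $u\in(0,1]$, so $t/u>0$). Plugging this in,
\[
  \P[X^*>t]\ \ge\ \beta\int_0^1 u^{\alpha-1}(t/u)^\beta G(t/u)\,du\ =\ \beta t^\beta\int_0^1 u^{\alpha-\beta-1}G(t/u)\,du ,
\]
and the integral here is finite: by Markov's inequality $G(t/u)\le\E[X^\beta](u/t)^\beta$, so the integrand is $O(u^{\alpha-1})$ as $u\to 0$. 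Combining the last display with $\P[X^*>t]\le G(t)/p$ and dividing through by $\beta t^\beta$ gives \eqref{eq:int.bound}.

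There is no serious obstacle; the computation is a chain of identities followed by one elementary inequality. The only points worth a word of care are that the $\beta$-power bias identity may legitimately be applied to the indicator $\1_{(s,\infty)}$ (it is bounded measurable) and that the integral in \eqref{eq:int.bound} converges, both of which are immediate from the bounds above.
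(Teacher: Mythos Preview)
Your proof is correct and follows essentially the same route as the paper's: expand $\P[X^*>t]$ via the density of $V_\alpha$ and the definition of the $\beta$-power bias, apply the pointwise bound $X^\beta\1\{X>t/u\}\ge(t/u)^\beta\1\{X>t/u\}$, and then invoke $X^*\utailprec{p}X$. Your added remark on the finiteness of the integral (via Markov's inequality) is a nice touch not made explicit in the paper.
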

  \begin{proof}
    Since $X^*\utailprec{p} X$,
    \begin{align*}
      G(t)\geq p\P[V_\alpha X^{(\beta)}> t] &= p\int_0^1\alpha u^{\alpha-1}\P[X^{(\beta)}> t/u ] \,du.
    \end{align*}
    By definition of the $\beta$-power bias,
    \begin{align*}
      p\int_0^1\alpha u^{\alpha-1}\P[X^{(\beta)}> t/u ] \,du
        &=p\beta\int_0^1u^{\alpha-1} \E\Bigl[ X^{\beta}\1_{\{X> t/u\}}\Bigr]\,du\\
        &\geq p\beta t^\beta\int_0^1 u^{\alpha-\beta-1}\P[X> t/u]\,du\\&=
        p\beta t^\beta\int_0^1 u^{\alpha-\beta-1}G(t/u)\,du.\qedhere
    \end{align*}
  \end{proof}

  Next, we apply this lemma to deduce bounds on $\E[X\mid X>t]$ in the $\beta-\alpha=1$
  case and on $\E[X^{-1}\mid X>t]$ in the $\beta-\alpha=-1$ case.
  
  \begin{lemma}\thlabel{lem:mrls}
    Suppose that $X^*\utailprec{p} X$ and $\E X^\beta=\alpha/\beta$.
    For all $t\geq 0$ such that $\P[X>t]>0$,
    \begin{align}
      \E\bigl[ X -t\bigmid X>t\bigr] &\leq \frac{1}{p\beta t^{\alpha}}
         &&\text{if $\beta-\alpha=1$}\label{eq:beta>alpha},\\\intertext{and}
       \E\bigl[ X^{-1} -t^{-1}\bigmid X>t\bigr] &\geq 
           - \frac{1}{p\beta t^{\alpha}}
              &&\text{if $\beta-\alpha=-1$}\label{eq:beta<alpha}.
    \end{align}
  \end{lemma}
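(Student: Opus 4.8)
The plan is to feed the estimate of \thref{lem:int.bound} through the change of variables $s=t/u$, under which $\int_0^1 u^{\alpha-\beta-1}G(t/u)\,du$ becomes a tail integral of $G$ that can be read off as a conditional expectation times $\P[X>t]$.

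I would first treat the case $\beta-\alpha=1$, so the exponent $\alpha-\beta-1$ equals $-2$. Substituting $s=t/u$ (so $u=t/s$ and $du=-t s^{-2}\,ds$) turns the left-hand side of \eqref{eq:int.bound} into $\frac1t\int_t^\infty G(s)\,ds$. By Fubini's theorem, $\int_t^\infty G(s)\,ds=\E[(X-t)^+]=\E[X-t\mid X>t]\,\P[X>t]$, which is finite since $\E X^\beta=\alpha/\beta<\infty$ with $\beta>1$ forces $\E X<\infty$. Then \thref{lem:int.bound} reads $\frac1t\,\E[X-t\mid X>t]\,G(t)\le \frac{G(t)}{p\beta t^\beta}$, and dividing by $G(t)>0$ and using $\beta-1=\alpha$ gives \eqref{eq:beta>alpha}.

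Next I would treat the case $\beta-\alpha=-1$, where $\alpha-\beta-1=0$. The same substitution turns the left-hand side of \eqref{eq:int.bound} into $t\int_t^\infty s^{-2}G(s)\,ds$, which is finite because $G\le 1$. Writing $X^{-1}-t^{-1}=-\int_t^X s^{-2}\,ds$ on the event $\{X>t\}$ and applying Fubini identifies $\int_t^\infty s^{-2}G(s)\,ds$ with $-\E[X^{-1}-t^{-1}\mid X>t]\,\P[X>t]$. Substituting into \thref{lem:int.bound}, dividing by $G(t)$, and using $\beta+1=\alpha$ yields \eqref{eq:beta<alpha}.

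The argument is short because the probabilistic content already sits in \thref{lem:int.bound}; what remains is bookkeeping. The only points needing care are keeping the orientation of the substitution straight, checking that the tail integrals converge in each case, and justifying the division by $\P[X>t]$, which is legitimate precisely under the stated hypothesis $\P[X>t]>0$. I do not anticipate any genuine obstacle.
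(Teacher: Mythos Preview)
Your proposal is correct and follows essentially the same approach as the paper: both apply \thref{lem:int.bound} and evaluate $\int_0^1 u^{\alpha-\beta-1}G(t/u)\,du$ via Fubini, arriving at $G(t)(t^{-1}\E[X\mid X>t]-1)$ in the first case and $G(t)(1-t\E[X^{-1}\mid X>t])$ in the second. The only cosmetic difference is that you first substitute $s=t/u$ and then invoke the tail-integral identity, whereas the paper swaps the order of integration directly in the $u$ variable and computes $\int_{t/X}^1 u^{-2}\,du$ (respectively $\int_{t/X}^1\,du$) in closed form.
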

  \begin{proof}
    Let $G(t)=\P[X>t]$.
    When $\beta-\alpha=1$, we apply \thref{lem:int.bound} and then switch the order of integration to obtain
    \begin{align*}
      \frac{G(t)}{p\beta t^\beta}\geq \int_0^1 u^{-2}G(t/u)\,du &= 
        \E \int_0^1 u^{-2}\1_{\{X>t/u\}}\,du\\
        &= \E\biggl[\1_{\{X>t\}} \int_{t/X}^1 u^{-2}\,du \biggr]\\
        &= \E\Bigl[ \1_{\{X>t\}}\bigl( X/t-1\bigr)
          \Bigr]
        =G(t)\Bigl(t^{-1}\E\bigl[X\bigmid X>t\bigr]
             -1\Bigr).
    \end{align*}
    Canceling the $G(t)$ factors and rearranging terms gives \eqref{eq:beta>alpha}.
    When $\beta-\alpha=-1$, the same approach yields
    \begin{align*}
      \frac{G(t)}{p\beta t^\beta}\geq \int_0^1 G(t/u)\,du 
      = \E\Bigl[ \1_{\{X>t\}} \bigl(1-t/X\bigr)\Bigr] = G(t)\Bigl(1 - t\E\bigl[ X^{-1}\bigmid X>t\bigr]\Bigr),
    \end{align*}
    proving \eqref{eq:beta<alpha}.
  \end{proof}
  
  To give a sense of the purpose of the preceding lemma,  
  the \emph{mean residual life} of a random variable $X$ is the function $m_X(t)=\E[X-t\mid X>t]$,
  with $m_X(t)$ defined as $0$ if $\P[X>t]=0$.
  In general, the distribution of a random variable can be recovered from its
  mean residual life function \cite[Lemma~2]{Meilijson}.
  In \thref{lem:mrls}, we bound the mean residual life
  of $X$ when $\beta-\alpha=1$ or of $-X^{-1}$ when
  $\beta-\alpha=-1$. (A similar approach would bound the mean residual life
  of $\log X$ when $\beta=\alpha$, but a different technique used in Section~\ref{sec:beta=alpha}
  gives better results in that case.) To prove \thref{thm:concentration} when $\alpha\neq\beta$,
  we will first use \thref{lem:scaling} to rescale $\alpha$ and $\beta$ so that $\alpha-\beta=\pm 1$,
  and then we apply the bounds on mean residual life from \thref{lem:mrls} to derive
  tail bounds on $X$.

  \begin{proof}[Proof of \thref{thm:concentration} for $\alpha<\beta$]
    First, we prove the theorem under the assumption that $\mu=1$ (i.e., $\E X^\beta=\alpha/\beta$)
    and that $\beta-\alpha=1$.    
    Let $t_0=(\alpha/p\beta)^{1/\beta}$, and
    let $Z$ be the random variable supported on the interval $(t_0,\infty)$ with
    \begin{align}\label{eq:Zdef}
      \P[Z> t] &=  \biggl(\frac{p\beta e}{\alpha}\biggr)^{\alpha/\beta}t^\alpha e^{-pt^\beta}.
    \end{align}
    We now compute the mean residual life function of $Z$.
    First, by Fubini's theorem, 
    \begin{align}\label{eq:mZ}
      \E\bigl[ (Z-t)\1_{\{Z>t\}}\bigr] = \int_t^{\infty}\P[Z>u]\,du = 
        \frac{1}{p\beta}\biggl(\frac{p\beta e}{\alpha}\biggr)^{\alpha/\beta}e^{-pt^\beta}.
    \end{align}
     Hence, for $t\geq t_0$,
    \begin{align*}
      \E[Z-t\mid Z>t] &= \frac{1}{p\beta t^\alpha }.
    \end{align*}
    By \thref{lem:mrls}, we have $\E[X-t\mid X>t]\leq \E[Z-t\mid Z>t]$ for all
    $t$ where $\P[X>t]>0$.
    By \cite[Theorem~4.A.26]{SS}, we have $\E\varphi(X)\leq\E\varphi(Z)$
    for any increasing convex function $\varphi(x)$.
    
    Now, we apply this statement with the right choice of $\varphi(x)$
    to obtain information about the tail of $X$.
    Fix $t>0$ and define $\varphi(x)=\max\bigl(0,\,(x - t+\gamma)/\gamma\bigr)$ for $\gamma$ to be chosen later
    satisfying $0<\gamma\leq t$.
    Then $\varphi$ is an increasing convex function, and hence $\E\varphi(X)\leq\E\varphi(Z)$.
    Observing that $\1_{\{x\geq t\}}\leq \varphi(x)$,
    \begin{align*}
      \P[X\geq t] \leq \E\varphi(X)&\leq\E\varphi(Z)\\
       &=\gamma^{-1} \E\Bigl[Z - (t-\gamma)\bigr)\1_{\{Z>t-\gamma\}}\Bigr]
       = \frac{1}{\gamma p\beta}\biggl(\frac{p\beta e}{\alpha}\biggr)^{\alpha/\beta}e^{-p(t-\gamma)^\beta}
    \end{align*}
    by \eqref{eq:mZ}, assuming that $t-\gamma\geq t_0$. Since $\beta>1$, the function $x^\beta$
    is convex and its graph lies above its tangent lines. Hence $x^\beta\geq t^\beta + (x-t)\beta t^{\beta-1}$,
    and setting $x=t-\gamma$ we have
    $(t-\gamma)^\beta\geq t^\beta - \gamma\beta t^\alpha$. Using this inequality
    and then minimizing by setting $\gamma=1 / p\beta t^\alpha$ yields
    \begin{align*}
      \P[X\geq t] 
        &\leq \frac{1}{\gamma p\beta}\biggl(\frac{p\beta e}{\alpha}\biggr)^{\alpha/\beta}e^{-p(t^\beta-\gamma\beta t^\alpha)}\\
        &= t^\alpha \biggl(\frac{p\beta e}{\alpha}\biggr)^{\alpha/\beta}
          e^{-pt^\beta+ 1},
    \end{align*}
    so long as $t - 1 / p\beta t^\alpha\geq t_0$.
    To make this bound simpler, first observe that $-x\log x \leq 1-x$ for all $x>0$, since $-x\log x$ is
    concave and $1-x$ is its tangent line at $x=1$. Exponentiating both sides of this inequality,
    setting $x=\alpha/\beta p$, and raising both sides to the $p$th power gives
    \begin{align}\label{eq:pb/a}
      \biggl(\frac{p\beta }{\alpha}\biggr)^{\alpha/\beta}\leq e^{p-\alpha/\beta},
    \end{align}
    from which we obtain
    \begin{align}\label{eq:final.bound}
      \P[X\geq t] &\leq t^\alpha e^{p-pt^\beta +1}\leq t^\alpha e^{2-pt^\beta},
    \end{align}
    still assuming
    \begin{align}
      t-1/p\beta t^\alpha\geq t_0.\label{eq:assumption}
    \end{align}
    
    Now, we show that either \eqref{eq:assumption} is satisfied or the right-hand side of 
    \eqref{eq:final.bound} exceeds $1$. Suppose $t^\alpha e^{2-pt^\beta}<1$. Since we are assuming
    $t\geq 1$, we have $t^\beta\geq 2/p$. Thus our goal is to show that \eqref{eq:assumption} holds
    when $t^\beta\geq 2/p$. Since the left-hand side of \eqref{eq:assumption} is increasing in $t$,
    it suffices to prove that \eqref{eq:assumption} holds when $t=(2/p)^{1/\beta}$.
    To obtain this, start with the inequality $\log(1-x/2)\geq -(\log 2)x$ for $x\in(0,1)$, which holds
    since $\log(1-x/2)$ is concave and equals $-(\log 2)x$ at $x=0,1$.
    Setting $x=1/\beta$ for $\beta>1$ and exponentiating, we obtain
    $1-1/2\beta\geq 2^{-1/\beta}$. Rearranging terms,
    $2^{1/\beta}\bigl(1 - 1/2\beta\bigr)\geq 1$. Hence,
    \begin{align*}
      2^{1/\beta}\biggl(1 - \frac{1}{2\beta}\biggr)\geq \biggl(\frac{\alpha}{\beta}\biggr)^{1/\beta},
    \end{align*}
    since the right-hand side is smaller than $1$.
    Multiplying both sides of this inequality by $p^{-1/\beta}$ and substituting $t=(2/p)^{1/\beta}$,
    \begin{align*}
      t - \frac{t}{2\beta}\geq 
        \biggl(\frac{\alpha}{p\beta}\biggr)^{1/\beta}.
    \end{align*}
    And this is exactly \eqref{eq:assumption}, since $t/2\beta = 1/p\beta t^\alpha$ for
    $t=(2/p)^{1/\beta}$ and $\alpha=\beta-1$.
    This completes the proof assuming $\E X^\beta=\alpha/\beta$
    and $\beta-\alpha=1$.
    
    Now, we drop the assumption $\E X^\beta=\alpha/\beta$
    and relax the assumption $\beta-\alpha=1$ to $\beta-\alpha>0$.
    Let $Y = (X/\mu)^{\beta-\alpha}$ and let $Y^\dagger = (X^*/\mu)^{\beta-\alpha}$,
    and observe that $(X/\mu)^*$ is given by $X^*/\mu$.
    By \thref{lem:scaling},
    the random variable $Y^\dagger$ has the $(\alpha',\beta')$-generalized equilibrium
    distribution of $Y$, where $\alpha'=\alpha/(\beta-\alpha)$ and $\beta'=\beta/(\beta-\alpha)$.
    Now $\beta'-\alpha'=1$ and $\E Y^{\beta'} = \E(X/\mu)^\beta=\alpha/\beta$.
    We now apply the special case of the theorem already proven to obtain
    \begin{align*}
      \P[X\geq \mu t] = \P[Y\geq t^{\beta-\alpha}] &\leq t^{(\beta-\alpha)\alpha'} e^{2-pt^{(\beta-\alpha)\beta'}}
        = t^\alpha e^{2-pt^\beta}
    \end{align*}
    for any $t\geq 1$.
  \end{proof}
  
  The proof when $\alpha>\beta$ follows the same structure, differing only
  in some analytical details.
  \begin{proof}[Proof of \thref{thm:concentration} for $\alpha>\beta$]  
    As in the $\alpha<\beta$ case, it suffices to prove the theorem under the assumptions
    $\mu=1$ (or equivalently $\E X^\beta=\alpha/\beta$)
    and $\alpha-\beta=1$.
    Let $t_0=(\alpha/p\beta)^{\frac{1}{\beta}}$ and
    $Z$ be the random variable supported on $(t_0,\infty)$ defined by \eqref{eq:Zdef}.
    The mean residual life function of $Z$ computed in \eqref{eq:mZ} does not hold
    in the $\alpha-\beta=1$ case, since the computation of the integral in \eqref{eq:mZ}
    relies on $\beta-\alpha=1$, but we can instead observe that
    \begin{align*}
      \P[Z^{-1}\leq t] = \biggl(\frac{p\beta e}{\alpha}\biggr)^{\alpha/\beta} t^{-\alpha}
        e^{ -pt^{-\beta}}
    \end{align*}
    for $0<t<t_0^{-1}$ and then compute
    \begin{align*}
      \E\bigl[(Z^{-1}-t)\1_{\{Z^{-1}<t\}} \bigr] =-\int_0^t \P[Z\leq u]\,du = 
        -\frac{1}{p\beta}\biggl(\frac{p\beta e}{\alpha}\biggr)^{\alpha/\beta}e^{-pt^{-\beta}}.
    \end{align*}
    Hence,
    \begin{align*}
      \E\bigl[Z^{-1}-t\mid Z^{-1}\leq t\bigr] &= -\frac{t^\alpha}{p\beta}.
    \end{align*}
    We then have
    \begin{align*}
      \E\bigl[X^{-1}-t\bigmid X^{-1}\leq t\bigr] \geq \E\bigl[X^{-1}-t\bigmid X^{-1}< t\bigr]
        \geq \E\bigl[Z^{-1}-t\mid Z^{-1}\leq t\bigr]
    \end{align*}
    by \thref{lem:mrls}. 
    By \cite[Theorem~4.A.27]{SS}, it holds for all decreasing convex functions $\varphi(x)$
    that $\E\varphi\bigl(X^{-1}\bigr)\leq\E\varphi\bigl(Z^{-1}\bigr)$.
    Taking $\varphi(x) = \max\bigl(1 + (t-x)/\gamma, 0\bigr)$ for $\gamma>0$
    to be chosen later  and observing
    that $\varphi(x)\geq\1\{x\leq t\}$, we obtain
    \begin{align*}
      \P\bigl[X^{-1}\leq t\bigr] &\leq \E \varphi\bigl(X^{-1}\bigr)\\ &\leq \E\varphi\bigl(Z^{-1}\bigr) 
      = \frac{1}{\gamma}\E\Bigl[
         \bigl(\gamma+t - Z^{-1}\bigr)\1\bigl\{Z^{-1}\leq \gamma+t\bigr\} \Bigr]\\
         &=\frac{1}{p\beta\gamma}\biggl(\frac{p\beta e}{\alpha}\biggr)^{\alpha/\beta}e^{-p(\gamma+t)^{-\beta}},
    \end{align*}
    assuming $\gamma+t<t_0^{-1}$.
    Applying the inequality
    $(\gamma+t)^{-\beta}\geq t^{-\beta}-\beta \gamma t^{-\beta-1}$ which holds by convexity of $x^{-\beta}$
    and then optimizing by setting $\gamma = t^\alpha/p\beta$,
    \begin{align*}
      \P\bigl[X^{-1}\leq t\bigr] &\leq \frac{1}{p\beta\gamma}\biggl(\frac{p\beta e}{\alpha}\biggr)^{\alpha/\beta}e^{-p(t^{-\beta}+\beta\gamma t^{-\alpha})}\\
        &= \biggl(\frac{p\beta e}{\alpha}\biggr)^{\alpha/\beta}t^{-\alpha}e^{1-pt^{-\beta}}.
    \end{align*}
    Thus, under the assumption
    \begin{align}\label{eq:ambass}
      1/p\beta t^{\alpha} + t^{-1} < (p\beta/\alpha)^{1/\beta},
    \end{align}
    we obtain
    \begin{align}\label{eq:finamb1}
      \P[X\geq t]=\P\bigl[X^{-1}\leq t^{-1}\bigr] \leq \biggl(\frac{p\beta e}{\alpha}\biggr)^{\alpha/\beta}t^{\alpha}e^{1-pt^{\beta}}\leq t^\alpha e^{p+1-pt^\beta}\leq t^\alpha e^{2-pt^\beta},
    \end{align}
    applying \eqref{eq:pb/a} in the last step.
    
    Finally, we show that either \eqref{eq:ambass} holds
    or the right-hand side of \eqref{eq:finamb1} exceeds $1$.
    Let $f(t)=2-pt^\beta+(\beta+1)\log t$, so that the right-hand side of \eqref{eq:finamb1} is 
    equal to $e^{f(t)}$. Let $g(t)=1/p\beta t^{\beta+1} + t^{-1}$, the left-hand side of
    \eqref{eq:ambass}. Our goal is to show that for all $t\geq 1$ that
    if $f(t)<0$, then $g(t)<(p\beta/(\beta+1))^{1/\beta}$. It is easy to check that for $t\geq 1$,
    the function $f(t)$ increases and then decreases, with $f(1)>0$. Since $g(t)$ is decreasing,
    it suffices to find a value $s\geq 1$ so that $f(s)\geq 0$ and $g(s)\leq (p\beta/(\beta+1))^{1/\beta}$.
    We claim that this holds for
    \begin{align*}
      s &= \biggl( \frac{2(\beta+1)}{p\beta}\biggr)^{1/\beta}.
    \end{align*}
    To confirm this, we compute
    \begin{align*}
      f(s) &= 2 +\frac{\beta+1}{\beta}\Biggl( \log\biggl(\frac{2(\beta+1)}{p\beta}\biggr)-2\Biggr)
           \geq 2 +\frac{\beta+1}{\beta}\Biggl( \log\biggl(\frac{2(\beta+1)}{\beta}\biggr)-2\Biggr).
    \end{align*}
    A bit of calculus shows that the right-hand side of this inequality is minimized when $(\beta+1)/\beta=e/2$ and is equal to $2-e/2>0$ in this case.
    Finally, we must confirm that $g(s)\leq (p\beta/(\beta+1))^{1/\beta}$. We compute
    \begin{align*}
      g(s) &= \biggl(\frac{p\beta}{2(\beta+1)}\biggr)^{1/\beta}\biggl(\frac{2\beta+3}{2\beta+2}\biggr).
    \end{align*}
    Cancelling a factor of $(\beta/(\beta+1))^{1/\beta}$, we must show that
    \begin{align}\label{eq:g(s)}
      2^{-1/\beta}\biggl(\frac{2\beta+3}{2\beta+2}\biggr)\leq 1.
    \end{align}
    Using the inequality $\log(x+1)-\log x\leq 1/x$,
    \begin{align*}
      \log(2\beta+3)-\log(2\beta+2)\leq\frac{1}{2\beta+2}\leq\frac{1}{2\beta}\leq\frac{\log 2}{\beta},
    \end{align*}
    and exponentiating both sides of this inequality confirms \eqref{eq:g(s)}.
  \end{proof}

  \subsection{Upper tail bounds for $\alpha=\beta$}\label{sec:beta=alpha}
  
  As we mentioned in the introduction, \thref{thm:concentration} in the case $\alpha=\beta=p=1$ was 
  first proven in \cite{Brown}.
  The general $\alpha=\beta$ case with $p=1$ then follows by an application of \thref{lem:scaling},
  and it is not difficult to modify Brown's argument to allow $p<1$.
  To save the reader the effort of
  going back and forth between Brown's paper and this one,
  and to highlight his elegant proof, we present the full argument here.
  
  Let $Z(u)$ be a random variable with the distribution 
  $[X^{(\beta-\alpha)}\mid X^{(\beta-\alpha)}\geq u]$.
  If $U\sim\mu$ is a random variable, let $Z(U)$ denote a random variable whose distribution is the mixture
  governed by $\mu$, i.e.,
  \begin{align*}
    \P\bigl[Z(U)\in B\bigr] = \int \P\bigl[X^{(\beta-\alpha)}\in B\bigmid X^{(\beta-\alpha)}\geq u\bigr] \,\mu(du).
  \end{align*}
  
  \begin{prop}\thlabel{prop:coupling}
    Let $\pi$ be the law of $X$, and define $(U,W)$ as the random variables with joint density
    \begin{align*}
      \1\{0\leq u\leq w\}\frac{\alpha u^{\alpha-1}w^{\beta-\alpha}}{\E X^{\beta}}\,du\,d\pi(w).
    \end{align*}
    Then
    \begin{enumerate}[(a)]
      \item $U\eqd X^*$; \label{i:U.dist}
      \item $W\eqd X^{(\beta)}$; \label{i:W.dist}
      \item for any Borel set $\Bb\subset\RR$,
        \begin{align*}
          \P[U\in\Bb\mid W] = \P[V_\alpha W \in\Bb\mid W] \text{ a.s.,}
        \end{align*}
        where $V_\alpha$ has density $\alpha x^{\alpha-1}$
        on $[0,1]$ and is independent of $W$; \label{i:U|W.dist}
      \item for any Borel set $\Bb\subset\RR$,\label{i:W|U.dist}
        \begin{align*}
          \P[W\in\Bb\mid U] = \P[Z(U)\in \Bb\mid U]\text{ a.s.,}
        \end{align*}
        
      \item $Z(X^*)\eqd X^{(\beta)}$.\label{i:Z(X*)}
    \end{enumerate}
  \end{prop}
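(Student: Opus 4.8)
The proposition is a sequence of elementary density computations, so the plan is to establish the more basic parts \ref{i:W.dist}, \ref{i:U|W.dist}, \ref{i:W|U.dist} by direct calculation and then read off \ref{i:U.dist} and \ref{i:Z(X*)} by composing them. First I would integrate the joint density over $u\in[0,w]$, using $\int_0^w \alpha u^{\alpha-1}\,du = w^\alpha$, to find that $W$ has marginal law $w^\beta/\E X^\beta\,d\pi(w)$; this integrates to $1$ (so the displayed joint density is legitimate) and, by Definition~\ref{bbias}, is exactly the law of $X^{(\beta)}$, which is \ref{i:W.dist}. Dividing the joint density by this marginal, the conditional density of $U$ given $W=w$ is $\alpha u^{\alpha-1}/w^\alpha$ on $[0,w]$, which is the density of $wV_\alpha$ for $V_\alpha$ having density $\alpha x^{\alpha-1}$ on $[0,1]$; that is \ref{i:U|W.dist}. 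Part \ref{i:U.dist} then follows for free: by \ref{i:U|W.dist} a version of $U$ is $V_\alpha W$ with $V_\alpha$ independent of $W$, so by \ref{i:W.dist} we get $U\eqd V_\alpha X^{(\beta)}\eqd X^*$.

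For \ref{i:W|U.dist} I would instead integrate the joint density over $w\geq u$, which shows $U$ has marginal density $\frac{\alpha u^{\alpha-1}}{\E X^\beta}\,\E\bigl[X^{\beta-\alpha}\1\{X\geq u\}\bigr]$; the expectation here is finite because $X^{\beta-\alpha}\1\{X\geq u\}\leq u^{-\alpha}X^\beta$, and it is positive exactly when $\P[X\geq u]>0$, which is where the conditional probability is defined. Dividing, the conditional law of $W$ given $U=u$ is $\1\{w\geq u\}\,w^{\beta-\alpha}\,d\pi(w)/\E[X^{\beta-\alpha}\1\{X\geq u\}]$; since the law of $X^{(\beta-\alpha)}$ is proportional to $w^{\beta-\alpha}\,d\pi(w)$, conditioning it on $\{X^{(\beta-\alpha)}\geq u\}$ produces precisely this measure, which is the law of $Z(u)$, giving \ref{i:W|U.dist}. (When $\alpha=\beta$ one reads $X^{(\beta-\alpha)}=X^{(0)}$ as $X$ itself, so that $Z(u)=[X\mid X\geq u]$, and the same computation applies with $w^{\beta-\alpha}\equiv 1$.) Finally, \ref{i:Z(X*)} follows by composing the pieces: \ref{i:U.dist} together with \ref{i:W|U.dist} gives $(U,W)\eqd(X^*,Z(X^*))$, while \ref{i:W.dist} gives $W\eqd X^{(\beta)}$, whence $Z(X^*)\eqd X^{(\beta)}$.

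No step here is genuinely difficult; the only things needing care are bookkeeping items — checking the normalization of the joint density, respecting the support constraint $0\leq u\leq w$ when forming the two conditionals, remembering that conditional densities are pinned down only up to $\pi$-null sets (so the equalities in \ref{i:U|W.dist} and \ref{i:W|U.dist} hold a.s.\ as stated), and the finiteness check used in \ref{i:W|U.dist}. The only place that is a matching step rather than pure calculation is recognizing the conditional law of $W$ given $U$ as the law of $Z(u)$, which becomes transparent once both are written as the same tilted-and-truncated version of $\pi$.
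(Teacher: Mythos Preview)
Your proposal is correct and follows essentially the same route as the paper's proof: compute the $W$-marginal to get \ref{i:W.dist}, divide to get the conditional of $U\mid W$ for \ref{i:U|W.dist}, deduce \ref{i:U.dist} as $U\eqd V_\alpha W\eqd V_\alpha X^{(\beta)}$, then compute the conditional of $W\mid U$ and identify it with the law of $Z(u)$ for \ref{i:W|U.dist}, and finally combine to obtain \ref{i:Z(X*)}. Your write-up is in fact slightly more careful than the paper's (normalization, the finiteness check for $\E[X^{\beta-\alpha}\1\{X\geq u\}]$, and the $\alpha=\beta$ reading of $X^{(0)}$), but the argument is the same.
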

  \begin{proof}
    The density of $W$ is $(w^\beta/\E X^\beta)\,d\pi(w)$, proving \ref{i:W.dist}.
    Looking at the conditional density of $U$ given $W=w$, we see that \ref{i:U|W.dist}
    holds. 
    Taking expectations in \ref{i:U|W.dist}, we have $U\eqd V_\alpha W$, and
    together with \ref{i:W.dist} this implies \ref{i:U.dist}.
    Fact~\ref{i:W|U.dist}
    is proven by observing that the conditional density of $W$ given $U=u$ is 
    \begin{align*}
      \1\{w\geq u\}  
      \frac{w^{\beta-\alpha}}{\E [X^{\beta-\alpha}\1\{X\geq u\}]}\,d\pi(w),
    \end{align*}
    which is the density of $Z(u)$, the $(\beta-\alpha)$-power bias transform of $[X\mid X\geq u]$.
    Finally, taking expectations in \ref{i:W|U.dist} gives $W\eqd Z(U)$, and then
    \ref{i:U.dist} and \ref{i:W.dist} prove \ref{i:Z(X*)}.
  \end{proof}

  \begin{lemma}\thlabel{lem:sb.record}
    For any $p\in(0,1]$,
    \begin{align*}
      X^*\utailprec{p} X &\Longrightarrow X^{(\beta)}\utailprec{p} Z(X),\\\intertext{and}
      X^*\ltailprec{p} X &\Longrightarrow X^{(\beta)}\ltailprec{p} Z(X).
    \end{align*}
  \end{lemma}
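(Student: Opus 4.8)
The plan is to reduce both implications to two elementary ``transfer'' properties of the orders $\utailprec{p}$ and $\ltailprec{p}$, combined with the identity $Z(X^*)\eqd X^{(\beta)}$ recorded in \thref{prop:coupling}. The underlying observation is that $\P[Z(X)>t]$ and $\P[Z(X^*)>t]$ are integrals of the \emph{same} function $u\mapsto\P[Z(u)>t]$ against the laws of $X$ and of $X^*$ respectively, and that function is monotone in $u$.

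First I would establish the transfer properties. By \thref{lem:ultail}, $X^*\utailprec{p} X$ provides a coupling of $X^*$ and $X$ with $\P[X^*\leq X\mid X^*]\geq p$ a.s.; for every bounded non-decreasing $\psi\colon[0,\infty)\to[0,\infty)$ this gives
\begin{align*}
  \E\psi(X)\geq\E\bigl[\psi(X)\1\{X^*\leq X\}\bigr]\geq\E\bigl[\psi(X^*)\1\{X^*\leq X\}\bigr]
    =\E\bigl[\psi(X^*)\P[X^*\leq X\mid X^*]\bigr]\geq p\,\E\psi(X^*),
\end{align*}
the second inequality using that $\psi$ is non-decreasing. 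Symmetrically, $X^*\ltailprec{p} X$ provides a coupling with $\P[X^*\leq X\mid X]\geq p$ a.s., and for every bounded non-increasing $\phi\colon[0,\infty)\to[0,\infty)$,
\begin{align*}
  \E\phi(X^*)\geq\E\bigl[\phi(X^*)\1\{X^*\leq X\}\bigr]\geq\E\bigl[\phi(X)\1\{X^*\leq X\}\bigr]
    =\E\bigl[\phi(X)\P[X^*\leq X\mid X]\bigr]\geq p\,\E\phi(X).
\end{align*}

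Next I would pin down the monotone test functions. Fix $t$ and put $\psi_t(u)=\P[Z(u)>t]$. Since $Z(u)$ has law $[X^{(\beta-\alpha)}\mid X^{(\beta-\alpha)}\geq u]$ and raising the threshold in such a conditioning only increases the variable stochastically, $\psi_t$ is non-decreasing in $u$ and valued in $[0,1]$, so $\phi_t:=1-\psi_t$ is non-increasing and nonnegative. By definition of the mixture, $\P[Z(X)>t]=\E\psi_t(X)$ and $\P[Z(X^*)>t]=\E\psi_t(X^*)$, and correspondingly $\P[Z(X)\leq t]=\E\phi_t(X)$ and $\P[Z(X^*)\leq t]=\E\phi_t(X^*)$.

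Finally I would combine the pieces. Using $Z(X^*)\eqd X^{(\beta)}$ from \thref{prop:coupling} together with the first transfer property applied to $\psi=\psi_t$,
\begin{align*}
  \P[X^{(\beta)}>t]=\P[Z(X^*)>t]=\E\psi_t(X^*)\leq\frac1p\,\E\psi_t(X)=\frac1p\,\P[Z(X)>t],
\end{align*}
which is exactly $X^{(\beta)}\utailprec{p} Z(X)$; applying the second transfer property to $\phi=\phi_t$ gives $\P[X^{(\beta)}\leq t]=\E\phi_t(X^*)\geq p\,\E\phi_t(X)=p\,\P[Z(X)\leq t]$, i.e.\ $X^{(\beta)}\ltailprec{p} Z(X)$. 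I do not expect a genuine obstacle here; the only point deserving a word of care is that $Z(u)$ — an inverse-moment bias when $\alpha>\beta$ — is well defined and stochastically monotone in $u$ for $u$ ranging over the support of $X$, which carries full mass under the laws of both $X$ and $X^*$ (as $X^*\eqd V_\alpha X^{(\beta)}$ stays within that support), so the integral representations above and the appearance of $Z(X^*)$ in \thref{prop:coupling} are legitimate.
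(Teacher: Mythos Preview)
Your proof is correct and follows essentially the same approach as the paper: both rely on the stochastic monotonicity of $u\mapsto Z(u)$, the coupling characterization of the orders from \thref{lem:ultail}, and the identity $Z(X^*)\eqd X^{(\beta)}$ from \thref{prop:coupling}. The only cosmetic difference is that the paper builds a simultaneous monotone coupling of $(Z(u))_{u\ge 0}$ and verifies the coupling condition for $(Z(X^*),Z(X))$ directly, whereas you bypass that joint coupling by first extracting the functional inequalities $\E\psi(X)\ge p\,\E\psi(X^*)$ (and its dual) for monotone $\psi$ and then specializing to $\psi_t(u)=\P[Z(u)>t]$.
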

  \begin{proof}
    First, we observe that $Z(u)$ is stochastically increasing in $u$. Thus we can
    couple the random variables $(Z(u))_{u\geq 0}$ so that $Z(u)\leq Z(v)$ whenever $u\leq v$
    (for example, by coupling all $Z(u)$ to the same $\Unif(0,1)$ random variable
    by the inverse probability transform).
    Under such a coupling, $X^*\leq X$ implies that $Z(X^*)\leq Z(X)$.
    
    Now, suppose $X^*\utailprec{p} X$.
    By \thref{lem:ultail}, there exists
    a coupling $(X,X^*)$ under which $\P[X^*\leq X\mid X^*]\geq p$ a.s.
    Hence, 
    \begin{align*}
      \P[Z(X^*)\leq Z(X)\mid X^*] \geq \P[X^*\leq X\mid X^*]\geq p
      \text{ a.s.}
    \end{align*}
    Thus $\P[Z(X^*)\leq Z(X)\mid Z(X^*)]\geq p$ a.s.\ as well.
    Since $Z(X^*)\eqd X^{(\beta)}$ by \thref{prop:coupling}\ref{i:Z(X*)}, this proves the lemma
    when $X^*\utailprec{p} X$. The proof when $X^*\ltailprec{p} X$ is identical
    except we take conditional expectations given $X$ rather than $X^*$.
  \end{proof}

  The next lemma will be used to prove a tail estimate for $Z(X)$ when $\alpha=\beta$.  
  \begin{lemma}\thlabel{lem:record.exp}
    Let $\mu$ be a probability measure on $[0,\infty)$. Then for any $t>0$,
    \begin{align*}
      \int_{[0,t)} \frac{1}{\mu[x,\infty)}\, \mu(dx) &\leq -\log\mu[t,\infty).
    \end{align*}
  \end{lemma}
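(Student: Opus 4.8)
The plan is to reduce the estimate to a one-line computation by changing variables from $x$ to the level $u=\mu[x,\infty)$. Write $G(x)=\mu[x,\infty)$, which is left-continuous, non-increasing, with $G(0)=1$. If $G(t)=0$ the right-hand side is $+\infty$ and there is nothing to prove, so I would assume $G(t)>0$; then $G(x)\ge G(t)>0$ for every $x<t$, so the integrand on the left is finite on $[0,t)$.

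For $u\in(G(t),1)$ I would set $\tau(u)=\sup\{x\ge 0:G(x)\ge u\}$ and first record the elementary consequences of left-continuity of $G$: the sublevel set $\{x:G(x)\ge u\}$ is the closed interval $[0,\tau(u)]$, one has $\tau(u)\in[0,t)$, and $G(\tau(u))\ge u$. The crux of the argument is then the claim that the pushforward of Lebesgue measure on $(G(t),1)$ under $\tau$ equals the restriction of $\mu$ to $[0,t)$. Since the intervals $[a,b)$ with $0\le a<b\le t$ form a $\pi$-system generating the Borel sets of $[0,t)$, it is enough to compute, for each such interval, the Lebesgue measure of $\{u\in(G(t),1):\tau(u)\in[a,b)\}$; using the equivalences $\tau(u)\ge a\iff G(a)\ge u$ and $\tau(u)<b\iff G(b)<u$, this set equals $(G(b),G(a)]$ intersected with $(G(t),1)$, which has Lebesgue measure $G(a)-G(b)=\mu[a,b)$, as wanted. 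This pushforward identity is the main obstacle: one must be a little careful with endpoints and with the behaviour of $\tau$ over jumps and flat stretches of $G$, but once the level-set description $\{G\ge u\}=[0,\tau(u)]$ is established, the rest is bookkeeping.

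Granting the claim, the proof concludes in one line: since $G(\tau(u))\ge u$ for every $u\in(G(t),1)$,
\[ -\log\mu[t,\infty)=-\log G(t)=\int_{G(t)}^{1}\frac{du}{u}\ \ge\ \int_{(G(t),1)}\frac{du}{G(\tau(u))}=\int_{[0,t)}\frac{\mu(dx)}{G(x)}, \]
the last equality being exactly the change of variables supplied by the pushforward identity. I would also mention the probabilistic content behind the name of the lemma: $\int_{[0,t)}\mu(dx)/\mu[x,\infty)$ is the expected number of strict upper records with value below $t$ in an i.i.d.\ sequence of law $\mu$, and one can argue alternatively by conditioning on the first index at which the sequence reaches level $t$ (geometric with parameter $q=\mu[t,\infty)$) and using that $j$ i.i.d.\ draws from any law yield at most $H_j=\sum_{i=1}^{j}1/i$ records in expectation, together with the identity $\sum_{j\ge 0}(1-q)^{j}H_j=-q^{-1}\log q$.
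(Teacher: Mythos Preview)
Your proof is correct and takes a genuinely different route from the paper's. The paper argues by discretization: for any partition $0=x_0<\cdots<x_n=t$ it bounds the step-function approximation $\sum_i \mu[x_i,x_{i+1})/\mu[x_i,\infty)$ by a telescoping sum of logarithms via $x\le -\log(1-x)$, and then passes to the limit by monotone convergence. Your argument instead performs a change of variables to the level set $u=G(x)$: once you establish that the pushforward of Lebesgue measure on $(G(t),1)$ by the generalized inverse $\tau$ is $\mu|_{[0,t)}$, the inequality $G(\tau(u))\ge u$ immediately reduces the integral to $\int_{G(t)}^1 du/u$.

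The paper's approach is more elementary---it needs only the one-variable inequality $x\le -\log(1-x)$ and a Riemann-sum limit---while yours is more conceptual and exposes why the bound is exactly $-\log G(t)$: it is the probability-integral transform, with the inequality arising precisely from the gap between $G(\tau(u))$ and $u$ at jumps of $G$ (so equality holds whenever $\mu$ is atomless). Your verification of the pushforward identity via the $\pi$-system of intervals $[a,b)$ and the equivalences $\tau(u)\ge a\iff u\le G(a)$, $\tau(u)<b\iff u>G(b)$ is clean; measurability of $\tau$ (which you did not mention explicitly) follows from its monotonicity. The probabilistic record interpretation you sketch at the end is a nice bonus and matches the lemma's name in the paper, though it is not needed for the proof.
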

  \begin{proof}
    Assume without loss of generality $\mu[t,\infty)>0$.
    For some partition $0=x_0<\cdots<x_n=t$, let $\varphi(x)$ be the step
    function taking value $1 / \mu[x_i,\infty)$ on interval $[x_i,x_{i+1})$
    for $i=0,\ldots,n-1$.
    Then
    \begin{align}
      \int_{[0,t)}\varphi(x)\,\mu(dx) &= \sum_{i=0}^{n-1}\frac{\mu[x_i,x_{i+1})}{\mu[x_i,\infty)}
        \leq \sum_{i=0}^{n-1} -\log\frac{\mu[x_{i+1},\infty)}{\mu[x_i,\infty)} = -\log\mu[t,\infty),
        \label{eq:step.functions}
    \end{align}
    with the inequality holding because $x\leq -\log(1-x)$ for $x\in[0,1)$.
    
    Now, consider any sequence $\varphi_n(x)$ of such step functions where each partition
    refines the last and the mesh size of the partition goes to zero. Then
    $\varphi_n(x)$ converges upward to $1/\mu[x,\infty)$ for all $x\in[0,t)$, and
    \begin{align*}
      \int_{[0,t)}\varphi_n(x)\,\mu(dx)\to\int_{[0,t)}\frac{1}{\mu[x,\infty)}\,\mu(dx)
    \end{align*}
    by the monotone convergence theorem.
    This proves the lemma by \eqref{eq:step.functions}.    
  \end{proof}

  \begin{lemma}\thlabel{lem:record.bound}
    For $\alpha=\beta$ and any random variable $X$,
    \begin{align*}
      \P[Z(X) \geq t] \leq \P[X\geq t]\bigl(1 - \log\P[X\geq t]\bigr).
    \end{align*}
  \end{lemma}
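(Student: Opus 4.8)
The plan is to rewrite $\P[Z(X)\geq t]$ as an explicit mixture and then invoke \thref{lem:record.exp}. Since $\alpha=\beta$ makes the exponent $\beta-\alpha$ vanish, the $(\beta-\alpha)$-power bias transform is the identity, so by the definition of $Z$ preceding \thref{prop:coupling} the variable $Z(u)$ has the law $[X\mid X\geq u]$, and $Z(X)$ is the mixture of these over $u$ distributed as $X$. Writing $\pi$ for the law of $X$, this gives
\[
  \P[Z(X)\geq t]=\int \P[X\geq t\mid X\geq u]\,\pi(du).
\]

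Next I would split this integral at $u=t$. On $\{u\geq t\}$ the event $\{X\geq u\}$ is contained in $\{X\geq t\}$, so the conditional probability equals $1$ and that part contributes $\pi[t,\infty)=\P[X\geq t]$; on $\{u<t\}$ the conditional probability is $\P[X\geq t]/\P[X\geq u]$. Factoring out $\P[X\geq t]$ yields
\[
  \P[Z(X)\geq t]=\P[X\geq t]\biggl(1+\int_{[0,t)}\frac{1}{\pi[u,\infty)}\,\pi(du)\biggr).
\]
Applying \thref{lem:record.exp} with $\mu=\pi$ bounds the remaining integral by $-\log\pi[t,\infty)=-\log\P[X\geq t]$, which is exactly the asserted inequality. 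As in \thref{lem:record.exp}, one may assume $\P[X\geq t]>0$, since otherwise both sides vanish under the convention $0\log 0=0$.

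I do not expect a genuine obstacle here; the only point requiring a little care is the bookkeeping for the $\{u\geq t\}$ portion of the integral, where the conditional law $[X\mid X\geq u]$ is undefined on the set $\{u:\P[X\geq u]=0\}$ — but since $\pi[u,\infty)=\P[X\geq u]$, that set is $\pi$-null in the relevant range, so it contributes nothing and the split above is valid.
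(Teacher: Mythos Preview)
Your proposal is correct and is essentially identical to the paper's proof: both observe that for $\alpha=\beta$ one has $\P[Z(u)\geq t]=\1\{u\geq t\}+\1\{u<t\}\,\P[X\geq t]/\P[X\geq u]$, integrate against the law of $X$, and bound the resulting integral over $[0,t)$ via \thref{lem:record.exp}. The only cosmetic difference is that the paper writes this as an expectation $\E[\1\{X<t\}/G(X)]$ rather than an integral against $\pi$.
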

  \begin{proof}
    Let $G(t) = \P[X\geq t]$.
    For any $u\geq 0$,
    \begin{align*}
      \P[Z(u) \geq t] = \1\{u\geq t\} + \1\{u < t\}\frac{G(t)}{G(u)}.
    \end{align*}
    Hence,
    \begin{align*}
      \P[Z(X)\geq t] &= G(t) + G(t)\E\biggl[\frac{\1\{X<t\}}{G(X)}\biggr],
    \end{align*}
    and by \thref{lem:record.exp} we have
    \begin{align*}
      \P[Z(X)\geq t] &\leq G(t) + G(t)\bigl(-\log G(t)\bigr).\qedhere
    \end{align*}
  \end{proof}
  
  \begin{proof}[Proof of \thref{thm:concentration} for $\alpha=\beta$]  
    As in the $\alpha\neq\beta$ cases, it suffices to prove the theorem
    under the assumption $\mu=1$, or equivalently $\E X^\beta=\alpha/\beta=1$.
    From the definition of $X^{(\beta)}$,
    \begin{align*}
      pt^\beta\P[X\geq t] \leq p\E\bigl[X^\beta\1\{X\geq t\}\bigr] = p\P[X^{(\beta)}\geq t].
    \end{align*}
    By \thref{lem:sb.record} followed by \thref{lem:record.bound},
    \begin{align*}
      p\P\bigl[ X^{(\beta)} \geq t \bigr]\leq \P[Z(X)\geq t] \leq 
      \P[X\geq t] \bigl(1 - \log\P[X\geq t]\bigr).
    \end{align*}
    We have now shown that $pt^\beta\P[X\geq t] \leq \P[X\geq t](1-\log \P[X\geq t])$.
    Assuming $\P[X\geq t]>0$, we can divide to obtain $pt^\beta \leq 1-\log\P[X\geq t]$,
      demonstrating that $\P[X\geq t]\leq e^{1-pt^\beta}$.
  \end{proof}

  \subsection{Lower tail bounds}
  \begin{proof}[Proof of \thref{thm:concentration.lower}]
    As in the proof of \thref{thm:concentration}, by rescaling it suffices
    to prove the theorem when $\mu=1$, i.e., $\E X^\beta = \alpha/\beta$.
    From the definition of $\ltailprec{p}$,
    \begin{align*}
      p\P[X\leq t]\leq \P[X^*\leq t]&=\int_0^1 \alpha u^{\alpha-1} \P\bigl[ X^{(\beta)}\leq t/u\bigr]du\\
        &=\frac{\beta}{\alpha} \E\Biggl[\int_0^1 \alpha u^{\alpha-1} X^\beta\1\{X\leq t/u\}\,du\Biggr]\\
        &=\frac{\beta}{\alpha} \E\Biggl[X^\beta\int_0^{\min(t/X,1)}\alpha u^{\alpha-1}\,du\Biggr]\\
        &= \frac{\beta}{\alpha} \E\Bigl[ X^\beta \min\bigl((t/X)^\alpha,\,1\bigr)\Bigr]\\
        &= \frac{\beta}{\alpha} \E\Bigl[ t^\alpha X^{\beta-\alpha}\1\{X>t\} + X^\beta\1\{X\leq t\}\Bigr].
    \end{align*}
    If $\alpha=\beta$, then the expectation is bounded by $t^\alpha$, proving \eqref{eq:ltail.alpha=beta}.
    Otherwise, we bound the two terms in the expectation separately to get
    \begin{align*}
      p\P[X\leq t] &\leq \frac{\beta}{\alpha}\biggl( t^\alpha\E X^{\beta-\alpha} + t^\beta\P[X\leq t]\biggr).
    \end{align*}
    By Jensen's inequality, $\E X^{\beta-\alpha} \leq (\E X^\beta)^{(\beta-\alpha)/\beta}=(\alpha/\beta)^{1-\alpha/\beta}$.
    Applying this bound and rearranging terms yields \eqref{eq:ltail}.
  \end{proof}

  \subsection{Sharpness of bounds}\label{sec:concentration.sharp}
  \thref{thm:concentration,thm:concentration.lower} are nearly optimal when $\alpha=\beta$
  but seem to be missing a factor of $t^{-\beta}$ when $\alpha\neq\beta$.
  First, let us assume that $p=1$.  
  In \cite{BrownSharp}, it is proven that
  \thref{thm:concentration} is sharp when $p=\alpha=\beta=1$, including the constant factor of $e$.
  By \thref{lem:scaling}, the theorem is sharp whenever $\alpha=\beta$.
  For the reader's convenience, we present a family of examples demonstrating that
  \thref{thm:concentration} cannot be improved in this case; it is a discrete counterpart 
  to the example given in \cite{BrownSharp}.
  Choose integers $\mu$ and $n$ and let $p=1/\mu$, and let $X$ have a capped version
  of the geometric distribution with success probability $p$ as follows:
  \begin{align*}
    \P[X=k]=\begin{cases}
      (1-p)^{k-1}p & \text{if $1\leq k \leq n$,}\\
      (1-p)^n & \text{if $k=n+\mu$,}\\
      0 & \text{otherwise.}
    \end{cases}
  \end{align*}
  Then $X^*\preceq X$ with $\alpha=\beta=1$ (easy to check with \thref{prop:reliability}\ref{i:nbue.char}),
  and $\E X = \mu$.
  Now, set $n=(t-1)\mu$ for some integer $t\geq 2$, and we have
  \begin{align*}
    \P[X\geq \mu t] = \P[X\geq n+\mu] = \bigl(1-\mu^{-1})^{(t-1)\mu}.
  \end{align*}
  This converges to $e^{1-t}$ as $m\to\infty$, confirming that there exist examples
  in which $\P[X\geq \mu t]$ comes arbitrarily close to $e^{1-t}$.
  
  When $\alpha\neq\beta$, one would hope for a upper tail bound of $O(t^{\alpha-\beta}e^{-pt^\beta})$
  rather than the $O(t^\alpha e^{-pt^\beta})$ achieved in \thref{thm:concentration}, which
  would match the tail of the generalized gamma distribution.
  But the best tail bound via moments for the generalized gamma distribution loses a factor of
  $t^{-\beta/2}$ (the calculation is similar to the one carried out
  in \thref{rmk:urn.optimality}), and the Chernoff approach of bounding the moment generating
  function used in the proof of \thref{thm:concentration} in the $\alpha\neq\beta$
  case is always inferior to the moment bound \cite{PhilipsNelson}.
  Thus a new approach would be needed for the proof if the optimal
  tail bound is to be achieved.
  Perhaps Brown's proof for the $\alpha=\beta$ case could be adapted when $\alpha\neq\beta$, 
  though we are not sure what the replacement for \thref{lem:record.bound} would be.
    
  As for lower tail bounds, if $X$ has the $(\alpha,\beta)$-generalized gamma distribution, 
  then \thref{thm:concentration.lower}
  applies to $X$ with $\mu=p=1$.
  Up to constants, the $O(t^\alpha)$ bound for $\P[X\leq t]$ shown in \thref{thm:concentration.lower}
  matches the true tail behavior of $X$ as $t\to 0$. 
  
  Now, we show that the dependence on $p$ in the theorem is nearly optimal. Suppose that
  $X^*\preceq X$ for some $\alpha,\beta>0$, and define
  \begin{align*}
    Y = \begin{cases}
          X& \text{with probability~$p$,}\\
          0& \text{with probability~$1-p$,}
    \end{cases}
  \end{align*}
  for some $0<p\leq 1$. Then
  \begin{align*}
    Y^*\eqd X^*\utailprec{p} Y.
  \end{align*}
  Let $\mu_X = \bigl(\frac{\beta}{\alpha}\E X^\beta\bigr)^{1/\beta}$
  and $\mu_Y = \bigl(\frac{\beta}{\alpha}\E Y^\beta\bigr)^{1/\beta} =p^{1/\beta}\mu_X$.
  For any $t\geq 1$, applying the $p=1$ case of \thref{thm:concentration} to $X$,
  \begin{align*}
    \P[Y \geq \mu_Yt] = p\P[X\geq p^{1/\beta}t\mu_X ]\leq
    \begin{cases} 
       p^{1+\alpha/\beta}t^\alpha e^{2-pt^\beta},  &\text{if $\alpha\neq\beta$,}\\
       e^{1-pt^\beta},& \text{if $\alpha=\beta$.}
    \end{cases}
  \end{align*}
  Meanwhile the bound from applying \thref{thm:concentration} to $Y$ is
  \begin{align*}
    \P[Y\geq \mu_Yt] \leq 
    \begin{cases}
      t^\alpha e^{2-pt^{\beta}},&\text{if $\alpha\neq\beta$,}\\
      e^{1-pt^\beta}&\text{if $\alpha=\beta$.}
    \end{cases}
  \end{align*}
  Thus, the bound on $Y$ is equally sharp as the bound on $X$ provided by the
  $p=1$ case of \thref{thm:concentration}, besides losing a factor of $p^{1+\alpha/\beta}$
  when $\alpha\neq\beta$.
  
  For an example showing optimal dependence on $p$ in the lower tail bound, 
  for the sake of simplicity take $\alpha=\beta=1$.
  Choose some $b<1<a$, and define $X$ as the mixture
  \begin{align*}
    X \eqd \begin{cases}
              \Exp(a) & \text{with probability $\frac{a(1-b)}{a-b}$,}\\
              \Exp(b) & \text{with probability $\frac{(a-1)b}{a-b}$,}
            \end{cases}
  \end{align*}
  and observe that $\E X=1$.
  We can compute directly that
  \begin{align}\label{eq:mixture.prob}
    \P[X\leq t] = \frac{1}{a-b} ( a(1-b)(1-e^{-at}) + (a-1)b(1-e^{-bt}))=(a+b-ab)t - O(t^2).
  \end{align}
  
  The equilibrium transform of a mixture is the mixture of the equilibrium transforms,
  with the new mixture governed by the old governor reweighted by expectation (see
  \thref{lem:equib.mixture}). Together with the fact exponential distributions are fixed points
  of the equilibrium transform, this yields
  \begin{align*}
    X^* \eqd \begin{cases}
               \Exp(a) & \text{with probability $\frac{1-b}{a-b}$,}\\
               \Exp(b) & \text{with probability $\frac{a-1}{a-b}$.}
    \end{cases}
  \end{align*}
  With a bit of work, one can show that $X^*\ltailprec{p}X$ with $p = 1/(a+b-ab)$.
  Thus \thref{thm:concentration.lower} yields
  \begin{align*}
    \P[X\leq t] \leq (a+b-ab)t,
  \end{align*}
  matching \eqref{eq:mixture.prob}.
  
  \section{Concentration for urns, graphs, walks, and trees}\label{sec:urns}

  Each of random variables $W$, $U$, $L_n$, and $L^b_{2n}$ in \thref{thm:graphs.walks.trees} can be 
  expressed in terms of an urn model that we describe now.
  An urn starts with black and white balls and draws are made sequentially. After a ball is drawn, 
  it is replaced and another ball of the same color is added to the urn. Also, after every $l$th draw an additional black ball is added to the urn for some $l\geq 1$. As defined in Section 1.2 of \cite{PRR}, let  $P_n^l(b,w)$ denote the distribution of the number of white balls in the urn after $n$ draws have been made when the urn starts with $b \geq 0$ black balls and $w > 0$ white balls, and let $N_n(b,w)\sim P_n^l(b,w)$.

  Let $N_n^{[r]}(b,w)$ be a rising factorial biased version of $N_n(b,w)$, as defined in Lemma~4.2 of
 \cite{PRR}, 
 so that $$\P\bigl[ N_n^{[r]}(b,w) =k\bigr]= \Biggl(c \prod_{i=0}^{r-1} (k+i)\Biggr) \P[N_n(b,w) =k]$$ for some $c$.
 There it is shown that $N_n^{[r]}(b,w) + r \eqd N_n(b,w+r)$.
 We will use this fact to prove concentration for $N_n(1,w)$, 
 but first we relate the rising factorial bias to the power bias $N_n^{(l+1)}(b, w)$.
 \begin{lemma}\thlabel{lem:unfactorialize}
   For all $1\leq l\leq n$ we have
    $ N_{n-l}^{[l+1]}(b,w) + l \succeq N_n^{(l+1)}(b,w).$
 \end{lemma}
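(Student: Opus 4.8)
The plan is to strengthen the claim to the statement that the ratio of probability mass functions $k\mapsto\P[N_{n-l}^{[l+1]}(b,w)+l=k]\,/\,\P[N_n^{(l+1)}(b,w)=k]$ is nondecreasing, which is the monotone likelihood ratio condition and hence implies domination in the usual stochastic order (see \cite[Section~1.C]{SS}). The key preliminary observation is the form of the mass function of $N_n(b,w)$: writing $N_n(b,w)=w+K_n$, where $K_n$ counts the white draws among the first $n$ draws, conditioning on which draws come up white contributes a factor $w^{(k)}:=w(w+1)\cdots(w+k-1)$ when $K_n=k$, while conditioning on the positions of the black draws contributes a factor independent of $w$. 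Thus
\[
\P[N_n(b,w)=w+k]\ \propto\ w^{(k)}\,C_n(k,b),\qquad k\in\{0,1,\dots,n\},
\]
for a function $C_n(k,b)$ depending on $n$, $k$, $b$ and $l$ but not on $w$. (This is exactly the structure behind the identity $N_m^{[r]}(b,w)+r\eqd N_m(b,w+r)$, since $(w+k)^{(r)}w^{(k)}=w^{(k+r)}$.)

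First I would rewrite both sides of the claim in this notation. From the definition of the $(l+1)$-power bias,
\[
\P\bigl[N_n^{(l+1)}(b,w)=w+k\bigr]\ \propto\ (w+k)^{l+1}\,w^{(k)}\,C_n(k,b),\qquad k\in\{0,\dots,n\}.
\]
From the definition of $N_{n-l}^{[l+1]}$, using $(w+k)^{(l+1)}w^{(k)}=w^{(k+l+1)}$ and then shifting the index by $l$,
\[
\P\bigl[N_{n-l}^{[l+1]}(b,w)+l=w+k\bigr]\ \propto\ w^{(k+1)}\,C_{n-l}(k-l,b)\qquad\text{for }k\in\{l,\dots,n\},
\]
while this probability is $0$ for $k<l$. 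Taking the ratio of the two displays, cancelling $w^{(k)}$ and one factor of $w+k$ via $w^{(k+1)}/w^{(k)}=w+k$, and absorbing the ($k$-independent) normalizing constants, we obtain
\[
\frac{\P[N_{n-l}^{[l+1]}(b,w)+l=w+k]}{\P[N_n^{(l+1)}(b,w)=w+k]}\ \propto\ \frac{C_{n-l}(k-l,b)}{(w+k)^l\,C_n(k,b)}\qquad(k\ge l),
\]
the ratio being $0$ for $k<l$. So it remains to show that $k\mapsto C_{n-l}(k-l,b)\big/\bigl((w+k)^l\,C_n(k,b)\bigr)$ is nondecreasing on $\{l,\dots,n\}$.

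After clearing denominators, this monotonicity is equivalent to
\[
\frac{C_{n-l}(k-l+1,b)}{C_{n-l}(k-l,b)}\ \ge\ \Bigl(\tfrac{w+k+1}{w+k}\Bigr)^{\!l}\,\frac{C_n(k+1,b)}{C_n(k,b)}\qquad\text{for }l\le k\le n-1,
\]
i.e.\ that the successive ratios of the urn mass function decay faster, in a quantitatively controlled way, at time $n$ than at time $n-l$. I expect this to be the main obstacle: it is a log-concavity-type regularity property of the urn counts. To prove it I would induct on $n$, using the one-step recursion
\[
\P[N_n(b,w)=j]=\frac{T_{n-1}-j}{T_{n-1}}\,\P[N_{n-1}(b,w)=j]+\frac{j-1}{T_{n-1}}\,\P[N_{n-1}(b,w)=j-1],
\]
where $T_{n-1}$ is the total number of balls present after $n-1$ draws and the periodic black-ball additions, to transport the ratio inequality across one additional draw at a time. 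Keeping track of the effect of the periodic black additions on $T_{n-1}$ (and hence on the ratios $C_n(k+1,b)/C_n(k,b)$) is what makes this step delicate, and it is presumably the content of the log-concavity argument deferred to the appendix. Granting the monotonicity, the monotone likelihood ratio gives $N_n^{(l+1)}(b,w)\preceq N_{n-l}^{[l+1]}(b,w)+l$, as required.
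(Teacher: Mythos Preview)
Your overall strategy—monotone likelihood ratio via \cite[Theorem~1.C.1]{SS}, then reduce to a log-concavity-type regularity for the urn pmfs—is exactly the paper's. Your decomposition through $w^{(k)}C_n(k,b)$ is correct and equivalent to what the paper does without that factorization; in the paper's indexing (where $k$ denotes the actual value rather than the excess over $w$) the likelihood ratio is written as a constant times
\[
\frac{(k-l)(k-l+1)\cdots k}{k^{l+1}}\cdot\frac{\P[N_{n-l}(b,w)=k-l]}{\P[N_n(b,w)=k]},
\]
and the first factor is increasing in $k$ by inspection.

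Where the paper goes beyond your sketch is in handling the second factor. Instead of a direct induction on $n$ with both $C_{n-l}$ and $C_n$ advancing together and the stray factor $\bigl((w+k+1)/(w+k)\bigr)^l$ carried along, the paper telescopes
\[
\frac{\P[N_{n-l}(b,w)=k-l]}{\P[N_n(b,w)=k]}=\prod_{j=n-l}^{n-1}\frac{\P[N_j(b,w)=j-n+k]}{\P[N_{j+1}(b,w)=j-n+k+1]},
\]
so that the appendix only has to establish the single one-step inequality
\[
\P[N_j=m-1]\,\P[N_{j+1}=m+1]\ \le\ \P[N_j=m]\,\P[N_{j+1}=m]
\]
(this is \thref{lem:lc.variant}), which is then proved by induction on $j$ using exactly the recursion you wrote down. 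Your coupled two-index induction may well be workable, but the telescoping is the device that isolates a clean, $l$-independent target for the appendix computation; without it your plan for the hard step is not yet a proof.
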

 \begin{proof}
   We will show that $\P\bigl[N_{n-l}^{[l+1]}(b,w) + l = k\bigr] / \P\bigl[N_n^{(l+1)}(b,w)=k\bigr]$
   is increasing in $k$ on the union of the support of
   $N_{n-l}^{[l+1]}(b,w) + l$ and $N_n^{(l+1)}(b,w)$, 
   which is $\{w,w+1,\ldots,w+n\}$.
   We have
   \begin{align*}
     \frac{\P\bigl[N_{n-l}^{[l+1]}(b,w) + l = k\bigr]}{\P\bigl[N_n^{(l+1)}(b,w)=k\bigr]}
       &= \frac{C(k-l)\cdots k\P[N_{n-l}(b,w) = k -l]}{k^{l+1} \P[N_n(b,w) = k] },
   \end{align*}
   where $C$ is a value that does not depend on $k$.
   The expression $(k-l)\cdots k/k^{l+1}$ is increasing in $k$, as is
   \begin{align*}
     \frac{\P[N_{n-l}(b,w) = k -l]}{\P[N_n(b,w) = k] } &= 
       \prod_{j=n-l}^{n-1}
       \frac{\P[N_j(b,w) = j-n+k]}{\P[N_{j+1}(b,w) = j-n+k+1] }
   \end{align*}
   for $w\leq k\leq w+n$, since it is $0$ for $w\leq k < w+l$ and
   each factor on the right-hand side in the product is increasing in $k$ by \thref{lem:lc.variant}
   for $w+l\leq k\leq w+n$ (in this range of $k$, the denominators of the fractions in the product
   are all nonzero).
   This proves the desired stochastic domination
   by \cite[Theorem~1.C.1]{SS}.
 \end{proof}
 
 \begin{prop}\label{ineq}
   For all $l,w,n\geq 1$,
   \begin{align*}
     N_n^*(1,w)\preceq N_n(1,w),
   \end{align*}
   where $N_n^*(1,w)$ is the $(w, l+1)$-generalized equilibrium transform of $N_n(1,w)$.
  \end{prop}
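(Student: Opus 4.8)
The plan is to unwind the $(w,l+1)$-generalized equilibrium transform using \thref{lem:unfactorialize} together with the rising-factorial identity $N_m^{[r]}(b,w)+r\eqd N_m(b,w+r)$ of \cite{PRR}, thereby reducing the assertion to a comparison of two ordinary urn counts, and then to settle that comparison using the regularity of the urn's mass function. The one elementary fact I would record first is that an independent multiplicative factor in $[0,1]$ preserves the usual stochastic order: if $A\preceq B$ and $V\in[0,1]$ is independent of $(A,B)$, then $VA\preceq VB$.

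Assume first $n\ge l$. By definition $N_n^*(1,w)\eqd V_w\,N_n^{(l+1)}(1,w)$, where $V_w$ has density $w u^{w-1}$ on $[0,1]$ and is independent of $N_n^{(l+1)}(1,w)$. By \thref{lem:unfactorialize} with $b=1$ we have $N_n^{(l+1)}(1,w)\preceq N_{n-l}^{[l+1]}(1,w)+l$, and applying the rising-factorial identity to the urn after $n-l$ draws with $r=l+1$ gives $N_{n-l}^{[l+1]}(1,w)+(l+1)\eqd N_{n-l}(1,w+l+1)$, hence $N_{n-l}^{[l+1]}(1,w)+l\eqd N_{n-l}(1,w+l+1)-1$. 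Multiplying the stochastic domination by $V_w$,
\begin{equation*}
  N_n^*(1,w)\ \preceq\ V_w\bigl(N_{n-l}(1,w+l+1)-1\bigr),
  \tag{$\star$}
\end{equation*}
so it suffices to prove $V_w\bigl(N_{n-l}(1,w+l+1)-1\bigr)\preceq N_n(1,w)$.

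This last inequality is the crux, and both the factor $V_w$ and the shift by $1$ are indispensable: a short burn-in coupling --- run the $(1,w)$-urn for its first $l$ draws, after which its running total agrees step for step with that of the $(1,w+l+1)$-urn, so that a common-uniform coupling of the two urns keeps the latter's white count above the former's --- shows that $N_{n-l}(1,w+l+1)$ alone already stochastically dominates $N_n(1,w)$, so nothing is gained by discarding the $V_w$-averaging. To prove the inequality I would bound $\int_0^1 w u^{w-1}\,\P[N_{n-l}(1,w+l+1)-1>t/u]\,du$ against $\P[N_n(1,w)>t]$ for each fixed $t$, using the log-concavity-type regularity of $k\mapsto\P[N_n(b,w)=k]$ established in \thref{lem:lc.variant} and, if needed, the explicit description of the law of $N_n(b,w)$ from \cite{PRR} (available because the urn's running total is deterministic). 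Finally, when $n<l$ no extra black ball is ever added and $N_n(1,w)$ is the white count of an ordinary two-color P\'olya urn, so $N_n(1,w)\eqd w+\Bin(n,V_w)$ by de Finetti's theorem and the required domination becomes an explicit binomial estimate.

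The main obstacle is the inequality following $(\star)$: the independent $V_w$-factor, the subtracted $1$, the time shift $n\mapsto n-l$, and the white-ball shift $w\mapsto w+l+1$ all have to balance exactly, and making the argument go through seems to require precisely the delicate mass-function regularity established in Appendix~A.
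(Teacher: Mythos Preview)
Your chain of reductions up to $(\star)$ is correct and in fact coincides with the paper's argument read in reverse: the paper also reduces everything to the inequality
\[
  V_w\bigl(N_{n-l}(1,w+l+1)-1\bigr)\ \preceq\ N_n(1,w).
\]
The genuine gap is that you do not prove this inequality. Your plan --- ``bound $\int_0^1 w u^{w-1}\,\P[N_{n-l}(1,w+l+1)-1>t/u]\,du$ against $\P[N_n(1,w)>t]$ using the log-concavity-type regularity of \thref{lem:lc.variant}'' --- is not a proof, and it is not clear how it could become one: the Appendix~A regularity is a statement about the mass function of a \emph{single} urn $N_n(b,w)$ across adjacent $n$ and $k$, and you have already consumed it once in invoking \thref{lem:unfactorialize}. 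It does not give you a comparison between the urns $N_{n-l}(1,w+l+1)$ and $N_n(1,w)$, which is what you now need.

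The paper's proof supplies exactly the missing idea: a structural representation of $N_n(1,w)$ as an ordinary P\'olya urn run for a random time, namely
\[
  N_n(1,w)\ \eqd\ Q_w\bigl(N_n(0,w+1)-w-1\bigr)\ \eqd\ Q_w\bigl(N_{n-l}(1,w+l+1)-w-1\bigr),
\]
where $Q_w(m)$ is the white-ball count after $m$ draws in a standard P\'olya urn started from $(1,w)$ (this is Lemma~4.5 of \cite{PRR}; the second equality is the trivial observation that the $(0,w+1)$-urn draws only white until the first immigration, after which it \emph{is} the $(1,w+l+1)$-urn). One then uses the pointwise bound $Q_w(m)\succeq (m+w)V_w$, which follows from an explicit order-statistics representation of $Q_w$, to obtain $N_n(1,w)\succeq V_w\bigl(N_{n-l}(1,w+l+1)-1\bigr)$ directly. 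So the ``delicate balance'' you describe after $(\star)$ is not settled by another appeal to mass-function regularity; it is settled by this P\'olya-urn embedding, which is the step your proposal is missing. For $n<l$ the same bound $Q_w(n)\succeq(n+w)V_w$ handles the case in one line, since then $N_n(1,w)\eqd Q_w(n)$ and $N_n^{(l+1)}(1,w)\leq n+w$ deterministically.
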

 \begin{proof}
 Let $Q_w(n)$ have the distribution of the number of white balls in a regular Polya urn after $n$ draws starting with 1 black and $w$ white balls. 
 For now, assume that $l\leq n$.
We argue that
\begin{align}
N_n(1,w)&\eqd Q_w(N_n(0,w+1)-w-1) \nonumber\\
&\eqd Q_w (N_{n-l}(1,w+1+l)-w-1) \nonumber\\
& \eqd Q_w (N^{[l+1]}_{n-l}(1,w)+l+1-w-1) \nonumber\\
& \succeq \bigl(N^{[l+1]}_{n-l}(1,w) +l \bigr)V_w,\label{eq:urn.fact.bias}
\end{align}
where $V_w$ has density $w x^{w-1}\,dx$ on $[0,1]$ and is independent of $N^{[l+1]}_{n-l}(1,w)$.
The first line is Lemma~4.5 from \cite{PRR}.
In the second line, we use the trivial relation
\begin{align*}
  N_n(0,w+1)\eqd N_{n-l}(1,w+1+l).
\end{align*}
In the third line, we use
 $$N_n(1,w+r)\eqd N_n^{[r]}(1,w)+r,$$which is the statement of Lemma 4.2 of \cite{PRR}.
The final line uses 
\begin{align}\label{eq:Polya}
  Q_w(n)\succeq (n+w) V_w,
\end{align}
which follows from the fact, taken from the proof of Lemma 4.4 of \cite{PRR}, that for independent and identically distributed $\Unif(0,1)$ variables $U_1,U_2,\ldots U_{w-1}$ we can write
$$Q_w(n) \eqd \max_{i=0,1,...,w-1}
(i + \lceil (n + w - i)U_i\rceil)$$
and this implies
$$Q_w(n) \succeq \max_{i=0,1,...,w-1}
 (n + w )U_i =(n+w)V_w.$$
 We now apply \thref{lem:unfactorialize} to \eqref{eq:urn.fact.bias} to obtain
 \begin{align*}
   N_n(1,w)&\succeq V_w N_n^{(l+1)}(1,w)\eqd N_n^*(1,w)
 \end{align*}
 when $l\leq n$.
 
 When $l > n$, the quantity $N_n(1,w)$ is the number of white balls in a regular Polya urn
 after $n$ draws starting with 1 black and $w$ white balls. Using \ref{eq:Polya} and observing
 that the maximum of $N_n(1,w)$ is $n+w$,
 \begin{align*}
   N_n(1,w) &\eqd Q_w(n)\succeq (n+w)V_w\succeq N_n^{(l+1)}(1,w)V_w\eqd N_n^*(1,w).\qedhere
 \end{align*}
 \end{proof}

 \begin{proof}[Proof of \thref{thm:graphs.walks.trees}]
We have $W\sim P_n^l(1,w)$, $U\sim P_{n-k-1}^1(1,2k)$,  and $L_{2n} \sim P_{n}^1(1,1)$ respectively from Remark~1.3, Proposition~2.1, and Proposition~3.4 in \cite{PRR}.
From Proposition~3.2 in \cite{PRR}, we have $L^b_{2n} \sim P_{n}^1(0,1)$,
and $P_n^1(0,1)$ is the same distribution as $P_{n-1}^1(1,2)$.
 The result then follows from Proposition~\ref{ineq} and then noting that the conditions of \thref{thm:concentration,thm:concentration.lower} hold.
 \end{proof}

 \begin{remark}\thlabel{rmk:urn.optimality}
   The rising factorial moments of $N_n(1,w)$ are explicitly computed in \cite[Lemma~4.1]{PRR}.
   When $w=l+1$, the concentration bound given in \thref{thm:graphs.walks.trees}
   is better than the one obtained from these moments via Markov's inequality. For the sake of simplicity,
   we illustrate with the case $l=1$, $w=2$. 
   The result of Part (a) of \thref{thm:graphs.walks.trees} along with \thref{thm:concentration} shows that
   \begin{align}
     \P\bigl[ N_n(1,w)\geq \gamma_n t\bigr] \leq e^{1-t^2},\label{eq:our.bound}
   \end{align}
   where $\gamma_n^2 = \E N_n(1,w)^2$. From \cite[Theorem~1.2]{PRR}, we know that
   $\gamma_n\sim 2\sqrt{n}$.
   
   Now, we compute the concentration inequality given by the rising factorial moments of $N_n(1,w)$.
   Using the notation $x^{[n]} = x(x+1)\cdots(x+n-1)$,
   from \cite[Lemma~4.1]{PRR} we have
   \begin{align*}
     \E \bigl(N_n(1,w)\bigr)^{[2m]} = 2^{[2m]}\prod_{i=0}^{n-1}\biggl(1 + \frac{2m}{2i+3}\biggr) = 
       2^{[2m]}\prod_{i=0}^{m-1}\frac{2n+2i+3}{2i+3}.
   \end{align*}
   The bound given by applying Markov's inequality to this is
   \begin{align}
     \P\bigl[ N_n(1,w)\geq \gamma_n t \bigr] \leq \frac{\E \bigl(N_n(1,w)\bigr)^{[2m]}}{(\gamma_n t)^{[2m]}}
       &= \frac{2^{[2m]}}{(\gamma_n t)^{[2m]}}\prod_{i=0}^{m-1}\frac{2n+2i+3}{2i+3}\nonumber\\
       &= \prod_{i=0}^{m-1}\frac{(2i+2)(2n + 2i + 3)}{(\gamma_nt + 2i)(\gamma_nt + 2i+1)}.
       \label{eq:moment.bound}
   \end{align}
   Let $m^*$ be the minimizing choice of $m$ in \eqref{eq:moment.bound}.
   Some algebra shows that the multiplicand in this expression
   is bounded by $1$ if and only if
   \begin{align*}
     i \leq \frac{\frac{\gamma_n^2}{n}t^2 + \frac{\gamma_n}{n}t - 4 - \frac{6}{n}}
                 {4 + \frac{8}{n} - \frac{4\gamma_n t}{n}}.
   \end{align*}
   Take $t$ to be fixed with respect to $n$.
   From the asymptotics for $\gamma_n$, the right-hand side of this inequality
   converges to $t^2-1$ as $n\to\infty$. Hence either $m^*=\ceil{t^2}-1$ or $m^*=\ceil{t^2}$ 
   when $n$ is sufficiently large.
   The optimal tail bound obtained from the rising factorial moments is then
   \begin{align*}
     \prod_{i=0}^{m^*-1} \frac{(2i+2)\bigl(2+\frac{2i+3}{n}\bigr)}{\Bigl(\frac{\gamma_n}{\sqrt{n}} t +\frac{2i}{\sqrt{n}}\Bigr)\Bigl(\frac{\gamma_n}{\sqrt{n}}t + \frac{2i+1}{\sqrt{n}}\Bigr)},
   \end{align*}
   which converges as $n\to\infty$  to
   \begin{align*}
     \prod_{i=0}^{m^*-1} \frac{(2i+2)(2)}
        {4t^2} = \frac{(m^*)!}{(t^2)^{m^*}}=\Omega(te^{-t^2}),
   \end{align*}
   applying Stirling's approximation to obtain the last estimate.
   Thus this bound is worse than \eqref{eq:our.bound} by a factor of $t$, when $n$ and $t$ are large.
   
   A more involved calculation for the general case $w=\alpha$, $l=\beta-1$ shows that
   the tail bound from moments is on the order of $t^{\alpha-\beta/2}e^{-t^\beta}$.
   Outside of the $\alpha=\beta$ case, our bound is on the order of $t^\alpha e^{-t^\beta}$
   and is outperformed by the moment bound.
 \end{remark}

  \section{Concentration for Galton--Watson processes}
  \label{sec:GW}
  We adopt the terminology from reliability theory that a random variable
  satisfying $X^*\preceq X$ with  $\alpha=\beta=1$ is
  \emph{NBUE}, which stands for ``new better than used in expectation''
  (see \thref{prop:reliability}\ref{i:nbue.char} for the source of this name).
  Since we will often be applying the equilibrium transform to discrete random
  variables (e.g., the child distribution of a Galton--Watson tree),
  we will use the notation $X^e$ to denote the discrete version
  of the $\alpha=\beta=1$ equilibrium tranform, 
  which we can define by setting $X^e=\ceil{X^*}$ with $X^*$ the standard
  $\alpha=\beta=1$ equilibrium transform.
  Equivalently, we can define $X^e$ to be chosen uniformly at random
  from $\{1,2,\ldots,X^s\}$, where $X^s$ is the size-bias transform of $X$.
  Observe that for a random variable $X$ taking values in the nonnegative integers,
  it is a consequence of the coupling interpretation
  of stochastic dominance that
  $X^e\preceq X$ if and only if $X^*\preceq X$.
  
  \subsection{Some concepts from reliability theory}\label{sec:reliability}
  We consider three classes of discrete probability distributions;
  we will state the relationship between the three classes and give some characterizations
  of them. All are standard in the reliability theory literature, sometimes with varying notation.
  
  To define the first class, the \emph{log-concave} distributions, we first define
  a sequence $t_0,t_1,\ldots$ as \emph{log-concave} if
  \begin{enumerate}[(i)]
    \item $t_n^2\geq t_{n-1}t_{n+1}$ for all $n\geq 1$, and
    \item $t_0,t_1,\ldots$ has no internal zeroes (i.e., if $t_i>0$ and $t_k>0$ for some $i<k$, then $t_j>0$ for
      all $i<j<k$).
  \end{enumerate}
  For $X$ taking nonnegative integer values, we say that $X$ is \emph{log-concave} if the sequence
  $\P[X=k]$ for $k\geq 0$ is log-concave.
  
  Next, we recall the class of distributions on the positive integers with the \emph{D-IFR} property,
  which stands for \emph{discrete increasing failure rate}. 
  As we defined in the introduction, the distribution of a
  positive integer--valued random variable $X$ is in this class if
  $\P[X=k\mid X\geq k]$ is increasing for $k\geq 1$, and in that case we say that $X$ is D-IFR.
  Sometimes in the literature, such random variables are just said to be IFR, with it understood
  to use the above definition rather than the continuous version when considering a
  discrete distribution. Sometimes the notation DS-IFR is used to refer to a random
  variable $X$ on the nonnegative integers for which
  $\P[X=k\mid X\geq k]$ is increasing for $k\geq 0$; see for example \cite{PCW}.
  
  As we mentioned in the introduction, a nonnegative random variable $X$ is said to be NBUE if
  $X^*\preceq X$ with $\alpha=\beta=1$. In the reliability theory literature, a random variable
  $X$ taking positive integer values is sometimes said to be D-NBUE if
  \begin{align}\label{eq:dnbue}
    \frac{1}{\E X}\sum_{k=0}^{\infty}\P[X> n+k] \leq \P[X>n]\qquad\text{for all $n\in\{0,1,2,\ldots\}$.}
  \end{align}
  But since the left-hand side of \eqref{eq:dnbue}
  is equal to $\P[X^e>n]$ (see \eqref{eq:Xe>k}), equation~\eqref{eq:dnbue} is equivalent
  to the assertion that $X^e\preceq X$, which holds for $X$ taking positive integer values
  if and only if $X$ is NBUE.
  
  \begin{prop}\thlabel{prop:reliability}
    For a positive integer--valued random variable $X$:
    \begin{enumerate}[(a)]
      \item $X$ is D-IFR if and only if the distributions
        $[X-k\mid X > k]$ are stochastically decreasing for integers $k\geq 0$;
        \label{i:ifr.char}
      \item $X$ is NBUE if and only if
        $\E[X-k\mid X>k]\leq \E X$ for all integers $k\geq 1$; and
        \label{i:nbue.char}
      \item  $\text{$X$ is log-concave} \implies \text{$X$ is D-IFR} \implies \text{$X$ is NBUE}$.
        \label{i:hierarchy}
    \end{enumerate}
  \end{prop}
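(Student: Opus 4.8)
The plan is to prove the three parts in order, arranging (a) and (b) so that (c) becomes a short deduction.

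For part (a) I would express everything through the survival function $\bar F(k)=\P[X>k]$ and the discrete failure rate $r(k)=\P[X=k\mid X\geq k]$, which satisfy $\bar F(k)/\bar F(k-1)=1-r(k)$ on the support. For $m\geq 0$ one has $\P[X-k>m\mid X>k]=\bar F(k+m)/\bar F(k)=\prod_{j=k+1}^{k+m}(1-r(j))$, so the ratio of the conditional tail of $[X-(k+1)\mid X>k+1]$ to that of $[X-k\mid X>k]$, at any level $m\geq 1$, equals $(1-r(k+m+1))/(1-r(k+1))$. This is at most $1$ for every $m\geq 1$ and every $k\geq 0$ exactly when $r$ is nondecreasing on $\{1,2,\dots\}$, i.e.\ exactly when $X$ is D-IFR; and taking $m=1$ shows that if the family is stochastically decreasing then $r$ is nondecreasing, giving the converse. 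The only care needed is bookkeeping for the indices $k$ with $\bar F(k)=0$ (where the conditional law is undefined) and for the top of a bounded support (where $r$ equals $1$), which is routine.

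For part (b) I would compute, by Fubini, $\E[(X-k)\1\{X>k\}]=\sum_{m\geq 1}\P[X\geq k+m]=\sum_{l\geq 0}\bar F(k+l)$, so $\E[X-k\mid X>k]=\bar F(k)^{-1}\sum_{l\geq 0}\bar F(k+l)$. Hence $\E[X-k\mid X>k]\leq \E X$ is precisely \eqref{eq:dnbue} at $n=k$. Since \eqref{eq:dnbue} holds automatically with equality at $n=0$ (because $\sum_{l\geq 0}\bar F(l)=\E X$ and $\bar F(0)=1$), the stated condition for all $k\geq 1$ is equivalent to \eqref{eq:dnbue} for all $n\geq 0$, which was already identified before the proposition with $X$ being NBUE.

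For part (c), the implication log-concave $\Rightarrow$ D-IFR amounts to showing that $\bar F(k+1)/\bar F(k)$ is nonincreasing, equivalently that the tail sums $\P[X\geq k]$ form a log-concave sequence. This follows either from the standard fact (recorded in Appendix~B) that partial sums of a log-concave sequence are again log-concave, or directly from the geometric domination $\P[X=k+m]\leq(\P[X=k]/\P[X=k-1])^m\,\P[X=k]$, which is immediate from log-concavity and bounds $\P[X\geq k+1]$ against $\P[X\geq k]$ after a short computation. For D-IFR $\Rightarrow$ NBUE, part (a) applied with the index $k=0$---where $[X\mid X>0]=X$ since $X$ is positive-integer-valued---gives $[X-k\mid X>k]\preceq X$ for all $k$, hence $\E[X-k\mid X>k]\leq\E X$, and part (b) then yields NBUE. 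Composing the two implications gives the full chain. The main obstacle throughout is bookkeeping rather than conceptual difficulty: carrying along the indices where $\bar F(k)=0$ and the endpoint of a bounded support in (a), and treating the case $\P[X=k]\geq\P[X=k-1]$ separately in the log-concave $\Rightarrow$ D-IFR step; once these are handled, each part is a few lines.
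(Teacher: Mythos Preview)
Your proposal is correct. Part~(b) is essentially the paper's argument. Parts~(a) and~(c) take slightly different routes.

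For (a), the paper argues probabilistically: in the forward direction it constructs $[X-k\mid X>k]$ by a sequential halting procedure with step-by-step halting probabilities $p_{k+1},p_{k+2},\ldots$, so that a coupling shows increasing $k$ makes halting more likely at every step; for the converse it tests stochastic dominance only at the level $m=1$. Your route through the product formula $\bar F(k+m)/\bar F(k)=\prod_{j=k+1}^{k+m}(1-r(j))$ and the telescoping ratio $(1-r(k+m+1))/(1-r(k+1))$ is more algebraic but equally valid, and has the merit of getting both directions from a single identity.

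For the first implication in (c), the paper does not go through log-concavity of the tail sums. Instead it observes that log-concavity of $(p_n)$ makes the ratio $p_{n+k}/p_{n+k+1}$ increasing in $n$, which is the likelihood-ratio ordering between $[X-k\mid X>k]$ and $[X-k-1\mid X>k+1]$; it then cites \cite[Theorem~1.C.1]{SS} to pass to the usual stochastic order, and invokes part~(a). Your approach via log-concavity of $\bigl(\P[X\geq k]\bigr)_{k\geq 0}$ works too and avoids the external reference, but note that this fact is \emph{not} recorded separately in Appendix~B as you assert; you would need to supply it (the cleanest version is to write $r(k)^{-1}=1+\sum_{m\geq 1}p_{k+m}/p_k$ and observe each summand is nonincreasing in $k$, which also removes the need for a separate case when $p_k\geq p_{k-1}$). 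The second implication in (c) matches the paper's.
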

  These properties are often stated in the reliability theory literature
  (see \cite[Fig.~2]{PCW} and \cite[Lemma~2]{RSZ}).
  Since we have had trouble digging up proofs of some of them, we have provided
  them in Appendix~B.

  Now, we introduce a class of distributions on the nonnegative integers
  that we call NBUEZT, with ZT standing for \emph{zero-truncated}.
  For $X$ taking nonnegative integer values, we say that $X$ is NBUEZT if $X^>$ is NBUE,
  or equivalently if $X^e\preceq X^>$ (recall from the introduction
  that $X^>$ denotes a random variable with the distribution $[X\mid X>0]$.
  
  In the language defined here, \thref{thm:GW1} states that if the child distribution
  of a Galton--Watson process is NBUE, then all generations are NBUE.
  This is a simple consequence of the statement that a random sum of NBUE-many i.i.d.\ NBUE
  summands is NBUE, which was proven in \cite[Corollary~2.2]{WDC} (though we provide
  a more conceptual proof).
  \thref{thm:GW2}
  states that with $L$ the child distribution, if $L^>$ is D-IFR then all generations
  of the process are NBUEZT. This raises a number of questions---for example, if $L$ is only assumed
  to be NBUEZT, then are successive generations NBUEZT?---that we address
  in Section\ref{sec:GW.optimality}.

  \subsection{Forming the equilibrium transform}
  
  First, we give a recipe for forming
  the equilibrium transform of a sum:
  
  \begin{lemma}\thlabel{lem:equib.sum}
    Let $X_1,\ldots,X_n$ be i.i.d.\ nonnegative random variables,
    and let $S=X_1+\cdots+X_n$. Then
    \begin{align}\label{eq:equib.sum.real}
      S^* \eqd \sum_{k=1}^{I-1} X_k + X_I^*,
    \end{align}
    where $I$ is chosen uniformly at random from $\{1,\ldots,n\}$, independent of all else,
    and $X^*$ denotes the $\alpha=\beta=1$ equilibrium transform of $X$.
    If $X_1,\ldots,X_n$ are integer-valued, then
    \begin{align}\label{eq:equib.sum}
      S^e \eqd\sum_{k=1}^{I-1} X_k + X_I^e.
    \end{align}
  \end{lemma}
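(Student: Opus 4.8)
The plan is to prove the real-valued identity \eqref{eq:equib.sum.real} directly from an integral representation of the equilibrium transform, and then to obtain \eqref{eq:equib.sum} as an immediate corollary by applying the ceiling function.

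First I would record the representation
\[
  \E[f(X^*)]=\frac{1}{\E X}\,\E\!\left[\int_0^X f(u)\,du\right],
\]
valid for every bounded measurable $f$ and every nonnegative $X$ with $0<\E X<\infty$ (which is implicit in $X^*$ being defined). This follows from $X^*\eqd V_1 X^{(1)}$ with $V_1\sim\Unif(0,1)$: the substitution $v=V_1 X^{(1)}$ gives $\E[f(V_1X^{(1)})]=\E\bigl[(X^{(1)})^{-1}\int_0^{X^{(1)}}f(v)\,dv\bigr]$ (note $X^{(1)}>0$ a.s.), and the defining relation of the $1$-power (size-)bias transform from \thref{bbias} turns the right-hand side into the displayed formula. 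Equivalently, this is just the statement that $X^*$ has density $\P[X>u]/\E X$ on $[0,\infty)$. Throughout, interchanging expectation and integration is harmless, since $\E\int_0^X|f(u)|\,du\leq\|f\|_\infty\,\E X<\infty$.

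Next I would apply this with $S$ in place of $X$ and split the range of integration at the partial sums. Writing $R_0=0$ and $R_{k-1}=X_1+\dots+X_{k-1}$ for $1\leq k\leq n$, one has $\int_0^S f(u)\,du=\sum_{k=1}^n\int_0^{X_k}f(R_{k-1}+v)\,dv$, hence
\[
  \E[f(S^*)]=\frac{1}{\E S}\sum_{k=1}^n\E\!\left[\int_0^{X_k}f(R_{k-1}+v)\,dv\right].
\]
For each $k$, the variable $X_k$ is independent of $R_{k-1}$, so conditioning on $R_{k-1}$ and applying the representation above to $X_k$ with test function $v\mapsto f(R_{k-1}+v)$ yields
\[
  \E\!\left[\int_0^{X_k}f(R_{k-1}+v)\,dv\right]=\E X_k\cdot\E\!\left[f(R_{k-1}+X_k^*)\right],
\]
where $X_k^*$ is taken independent of $R_{k-1}$. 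Since the $X_i$ are i.i.d.\ we have $\E X_k=\E X_1$ and $\E S=n\,\E X_1$, so dividing through gives
\[
  \E[f(S^*)]=\frac1n\sum_{k=1}^n\E\!\left[f\bigl(X_1+\dots+X_{k-1}+X_k^*\bigr)\right]
   =\E\!\left[f\!\left(\sum_{k=1}^{I-1}X_k+X_I^*\right)\right],
\]
with $I$ uniform on $\{1,\dots,n\}$ and independent of all else. As $f$ is an arbitrary bounded measurable function, this is \eqref{eq:equib.sum.real}.

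Finally, for the integer-valued case I would use $X^e=\ceil{X^*}$. When the $X_i$ are integer-valued, $\sum_{k=1}^{I-1}X_k$ is an integer, so the ceiling function commutes with adding it, and taking ceilings on both sides of \eqref{eq:equib.sum.real} gives
\[
  S^e=\ceil{S^*}\;\eqd\;\ceil*{\sum_{k=1}^{I-1}X_k+X_I^*}=\sum_{k=1}^{I-1}X_k+\ceil{X_I^*}=\sum_{k=1}^{I-1}X_k+X_I^e,
\]
which is \eqref{eq:equib.sum}. The argument is essentially bookkeeping; the only points that need care are getting the integral representation of $X^*$ in the first step correct and keeping the independence of $X_k$ from $R_{k-1}$ clearly in view, and I do not expect any genuine obstacle.
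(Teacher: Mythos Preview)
Your proof is correct. The paper handles \eqref{eq:equib.sum.real} simply by citing it as the i.i.d.\ special case of \cite[Theorem~4.1]{PR_nearly_critical}, and then derives \eqref{eq:equib.sum} by the same ceiling argument you give. So your argument is a self-contained direct proof of what the paper imports from the literature: the integral representation $\E[f(X^*)]=(\E X)^{-1}\E\int_0^X f(u)\,du$ together with splitting $\int_0^S$ at the partial sums is exactly the standard route to that cited result, and your treatment of the discrete case is identical to the paper's.
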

  \begin{proof}
    Equation~\ref{eq:equib.sum.real} is the special case of \cite[Theorem~4.1]{PR_nearly_critical} 
    in which $X_1,\ldots,X_n$ are i.i.d.
    When $X_1,\ldots,X_n$ are integer-valued, applying the ceiling function to both sides of 
    \eqref{eq:equib.sum.real} gives \eqref{eq:equib.sum}.
  \end{proof}

  Next, we consider the equilibrium transform of a mixture.
  To give notation for a mixture, let $h$ be a probability measure on the real numbers.
  Suppose that for each $b$ in the support of $h$,
  we have a random variable $X_b$ with distribution $\nu_b$ and mean $m_b\in [0,\infty)$.
  Also assume that $b\mapsto\mu_b$ is measurable.
  The random variable $X$ is the \emph{mixture of $(X_b)$ governed by $h$} if for all bounded
  measurable functions $g$,
  \begin{align*}
    \E g(X) = \int \E g(X_b)\,dh(b).
  \end{align*}
  The basic recipe for the equilibrium transform $X^e$ is that it is a mixture
  of the equilibrium transforms $X_b^e$, governed by a biased version of $h$.
  The analogous recipe works for forming the size-bias transform of a mixture,
  and this result follows from that.
  
  \begin{lemma}\thlabel{lem:equib.mixture}
    Let $X$ be the mixture of $(X_b)$ governed by $h$ as described above,
    and assume that $\E X <\infty$.
    Define the measure $h^s$ by its Radon--Nikodym derivative:
    \begin{align*}
      \frac{d h^s(b)}{dh} = \frac{m(b)}{\E X}.
    \end{align*}
    Then the distribution of $X^e$ is the mixture of $\bigl(X_b^e\bigr)$
    governed by $h^s$.
  \end{lemma}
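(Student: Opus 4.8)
The plan is to prove the analogous recipe for the size-bias transform first, and then deduce the statement for the equilibrium transform. Recall that the size-bias transform $X^s$ is characterized by $\E X\cdot\E[f(X^s)]=\E[Xf(X)]$ for all bounded measurable $f$, that the $\alpha=\beta=1$ generalized equilibrium transform satisfies $X^*\eqd UX^s$ with $U\sim\Unif(0,1)$ independent of $X^s$ (since $V_1\sim\Unif(0,1)$ and $X^{(1)}=X^s$), and that the discrete version is $X^e=\ceil{X^*}$. So it suffices to establish the recipe for $X^s$, upgrade it to $X^*$ by multiplying by an independent uniform, and then pass to $X^e$ by applying the ceiling function, which commutes with forming mixtures.

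\emph{Step 1: the size-bias recipe.} We may assume $\E X>0$, as otherwise $X=0$ a.s.\ and there is nothing to prove. For each $b$ with $m(b)>0$ let $X_b^s$ be the size-bias transform of $X_b$, and for $b$ with $m(b)=0$ set $X_b^s\equiv 0$; note that $\{b: m(b)=0\}$ is $h^s$-null because $dh^s/dh=m(b)/\E X$. Then for bounded measurable $f$,
\begin{align*}
  \E X\cdot\E[f(X^s)] = \E[Xf(X)] = \int \E[X_b f(X_b)]\,dh(b) = \int m(b)\,\E[f(X_b^s)]\,dh(b) = \E X\int \E[f(X_b^s)]\,dh^s(b),
\end{align*}
using the mixture definition of $X$ in the second equality, the defining property of the size-bias transform in the third (which is still valid when $m(b)=0$, both sides being zero), and the definition of $h^s$ in the last. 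Dividing by $\E X$ shows that $X^s$ is the mixture of $(X_b^s)$ governed by $h^s$. Measurability of $b\mapsto\E[f(X_b^s)]$ follows from the assumed measurability of $b\mapsto\nu_b$.

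\emph{Step 2: passing to $X^*$ and $X^e$.} Let $U\sim\Unif(0,1)$ be independent of everything. A routine application of Fubini's theorem shows that if a random variable is a mixture governed by $h^s$, then its product with the independent variable $U$ is again a mixture governed by $h^s$; combined with Step 1, the law of $UX^s$ is the mixture of the laws of $UX_b^s$ governed by $h^s$. Since $UX^s\eqd X^*$ and $UX_b^s\eqd X_b^*$, this says exactly that $X^*$ is the mixture of $(X_b^*)$ governed by $h^s$. Applying the deterministic map $x\mapsto\ceil x$ then gives that $X^e=\ceil{X^*}$ is the mixture of $\bigl(\ceil{X_b^*}\bigr)=(X_b^e)$ governed by $h^s$, as claimed.

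The argument is essentially a direct computation, so there is no serious obstacle; the only point needing care is the measure-theoretic bookkeeping around values of $b$ with $m(b)=0$, where $X_b^s$ and hence $X_b^e$ are not defined. This is harmless because such $b$ form an $h^s$-null set and contribute zero to both sides of the identity in Step 1.
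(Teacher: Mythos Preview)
Your proof is correct and follows essentially the same route as the paper: establish the size-bias mixture recipe, then multiply by an independent uniform and apply the ceiling function. The only difference is that the paper cites \cite[Lemma~2.4]{AGK} for the size-bias step, whereas you prove it directly; one small point you might make explicit is that applying the mixture identity to $g(x)=xf(x)$ (which is unbounded) requires a routine extension from bounded test functions, justified since $\E X<\infty$.
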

  \begin{proof}
    By \cite[Lemma~2.4]{AGK}, the size-bias transform $X^s$
    is distributed as the mixture of $X^s_b$ governed by $h^s$.
    With $U\sim\Unif(0,1)$ independent of all else,
    the equilibrium transform $\ceil{UX^s}$ is thus the mixture
    of $\ceil{UX^s_b}$ governed by $h^s$.
  \end{proof}

  \subsection{Proofs of the concentration theorems for Galton--Watson trees}

  \thref{thm:GW1} is a simple consequence of the following statement that an NBUE
  quantity of i.i.d.\ NBUE summands is NBUE. This fact was previously
  proven in \cite[Corollary~2.2]{PCW}. We include our proof here, as it
  takes a very different
  approach from theirs.
  \begin{prop}\thlabel{prop:NBUE.sum}
    Let $X,\,X_1,\,X_2,\,\ldots$ be i.i.d.\ nonnegative random variables, 
    and let $L$ be a positive integer--valued random variable
    independent of all else. Suppose that $X$ and $L$ are NBUE.
    Then the random sum $S = \sum_{k=1}^L X_k$ is NBUE as well.
  \end{prop}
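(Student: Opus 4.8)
The plan is to use the equilibrium-transform recipes from \thref{lem:equib.sum,lem:equib.mixture} to build $S^e$ explicitly and then exploit the NBUE hypotheses via the mean-residual-life characterization \thref{prop:reliability}\ref{i:nbue.char}. First I would condition on $L$: writing $S = \sum_{k=1}^L X_k$, for each fixed value $L=\ell$ the sum $S_\ell := X_1 + \cdots + X_\ell$ has equilibrium transform $S_\ell^e \eqd \sum_{k=1}^{I-1} X_k + X_I^e$ with $I$ uniform on $\{1,\dots,\ell\}$, by \eqref{eq:equib.sum}. Since $S$ is the mixture of the $S_\ell$ governed by the law of $L$, \thref{lem:equib.mixture} says $S^e$ is the mixture of the $S_\ell^e$ governed by the size-biased law $L^s$. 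Concretely, this means $S^e \eqd \sum_{k=1}^{J-1} X_k + X_J^e$, where $J$ is drawn as follows: pick $\ell$ according to $L^s$, then pick $J$ uniform on $\{1,\dots,\ell\}$. A short computation shows this makes $J$ distributed as $L^e$ (choosing $J$ uniformly from $\{1,\dots,L^s\}$ is exactly the definition of $L^e$ recalled just before \thref{lem:equib.sum}). So the clean statement to prove is
\begin{align*}
  S^e \eqd \sum_{k=1}^{L^e - 1} X_k + X_{L^e}^e,
\end{align*}
with $L^e$, the $X_k$'s, and $X_{L^e}^e$ all built from independent randomness in the obvious way; this is really just the i.i.d.\ case of \cite[Theorem~4.1]{PR_nearly_critical} applied at the level of $L$ rather than a constant, and I would either cite it or derive it from \thref{lem:equib.sum,lem:equib.mixture} as above.

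Given this representation, I would show $S^e \preceq S$ by a coupling argument. Since $X$ is NBUE we have a coupling with $X^e \preceq X$; since $L$ is NBUE, $L^e \preceq L$ and (as noted in the excerpt) there is a coupling with $L^e \leq L$ a.s. Now couple everything: take $L^e \leq L$ a.s., use the \emph{same} i.i.d.\ sequence $X_1, X_2, \dots$ for both sides, and couple $X_{L^e}^e \leq X_{L^e}$ a.s. Then
\begin{align*}
  S^e = \sum_{k=1}^{L^e-1} X_k + X_{L^e}^e \;\leq\; \sum_{k=1}^{L^e-1} X_k + X_{L^e} \;=\; \sum_{k=1}^{L^e} X_k \;\leq\; \sum_{k=1}^{L} X_k = S,
\end{align*}
using nonnegativity of the $X_k$ in the last inequality. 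Hence $S^e \preceq S$, which for a nonnegative integer--valued random variable is equivalent to $S^* \preceq S$ (the observation recorded at the end of Section~\ref{sec:reliability}), i.e.\ $S$ is NBUE.

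The one subtlety I expect to be the main obstacle is getting the mixture bookkeeping exactly right: \thref{lem:equib.sum} gives $S_\ell^e$ as a mixture over the uniform index $I$, and then \thref{lem:equib.mixture} mixes these over $L^s$, and I need to verify carefully that the composite index $J$ (first $\ell \sim L^s$, then $J \sim \Unif\{1,\dots,\ell\}$) has exactly the law $L^e$ and is independent of the tail structure in the right way — in particular that, conditionally on $J = j$, the summands $X_1,\dots,X_{j-1}$ are still i.i.d.\ copies of $X$ and $X_j^e$ is still an independent equilibrium transform of $X$. This is true because in \thref{lem:equib.sum} the index $I$ is independent of the $X_k$'s, so the mixing in \thref{lem:equib.mixture} only reweights which value of $\ell$ we see, not the conditional law of the summands. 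Once that is nailed down, the coupling step is routine. (An alternative, if the mixture argument feels delicate to write cleanly, is to bypass the explicit representation and instead verify the mean-residual-life inequality $\E[S - n \mid S > n] \leq \E S$ directly using $\P[S^e > n] = \tfrac{1}{\E S}\sum_{k>n}\P[S>k]$ together with the conditional independence structure — but I expect the coupling route via \thref{lem:equib.sum,lem:equib.mixture} to be shorter and more conceptual, matching the paper's stated preference for a proof "different from" the one in \cite{PCW}.)
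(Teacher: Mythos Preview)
Your proposal is correct and follows essentially the same route as the paper: build $S^*$ from \thref{lem:equib.sum,lem:equib.mixture}, identify the resulting index (your $J$, the paper's $I_{L^s}$) as $L^e$, and then couple using $X^*\preceq X$ and $L^e\preceq L$. One small point: the proposition is stated for general nonnegative (not necessarily integer-valued) $X$, so you should work with $X^*$ and \eqref{eq:equib.sum.real} rather than the discrete $X^e$ and \eqref{eq:equib.sum}; the argument is otherwise unchanged.
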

  \begin{proof}
     We construct $S^*$ using \thref{lem:equib.sum,lem:equib.mixture}, as was done
     in \cite[Theorem~3.1]{Pekoz2011}.
     Let $S_n=\sum_{k=1}^n X_k$, so that $S=S_L$, and
     let $T_k\eqd S_k^*$.
     By \thref{lem:equib.sum},
     \begin{align*}
       T_k \eqd X_1+\cdots+X_{I_k-1} + X_{I_k}^*,
    \end{align*}
    where $I_k$ is chosen uniformly at random from $\{1,\ldots,k\}$.
    
    By \thref{lem:equib.mixture}, the equilibrium transform of $S$
    is a mixture of $T_k$ governed by a distribution whose
    Radon--Nikodym derivative with respect to $L$ is
    \begin{align*}
      \frac{\E S_k}{\E S} = \frac{k\E X}{\E L\, \E X}=\frac{k}{\E L},\qquad k\in\mathbb{N},
    \end{align*}
    which is exactly the Radon--Nikodym distribution of $L^s$ with respect to $L$.
     Hence,
    \begin{align*}
      S^* \eqd T_{L^s} &= X_1+\cdots+X_{I_{L^s}-1} + X_{I_{L^s}}^*,\\
        &\preceq X_1 +\cdots+X_{I_{L^s}-1} + X_{I_{L^s}},
    \end{align*}
    with the second line following because $X$ is NBUE.
    Since $I_{L^s}$ is a uniform selection from $\{1,\ldots,L^s\}$, it is the discrete equilibrium
    transform of $L$. Hence $I_{L^s}\preceq L$, and
    \begin{align*}
      S^*&\preceq X_1+\cdots+X_L=S.\qedhere
    \end{align*}
  \end{proof}

  \begin{proof}[Proof of \thref{thm:GW1}]
    Let $L$ be a random variable whose distribution is the child distribution of the tree. 
    Each generation of the Galton--Watson process is the sum
    of $L$ independent copies of the previous generation, i.e.,
    \begin{align*}
      Z_{n+1} &\eqd \sum_{j=1}^L Z_n^{(j)},
    \end{align*}
    where $Z_n^{(j)}$ for $j\geq 1$ denote independent copies of $Z_n$ and $L$ is indepedent
    of $(Z_n^{(j)},\,j\geq 1)$.
    Observing that $Z_1^{(j)}\eqd L$ is NBUE, we can apply \thref{prop:NBUE.sum}
    inductively to conclude that $Z_n$ is NBUE for all $n$.
    The concentration inequalities \eqref{eq:GW1.1} and \eqref{eq:GW1.2} then follow 
    from Theorems~\ref{thm:concentration} and \ref{thm:concentration.lower} with $\alpha=\beta=1$.
  \end{proof}

  For \thref{thm:GW2}, it would be nice
  to argue that an NBUEZT quantity of NBUEZT summands remain NBUEZT, but we have not been able
  to prove or disprove this (see Section~\ref{sec:GW.optimality}).
  But we can show the following weaker statement. For a random variable $X$ taking nonnegative
  integer values, we write $\Bin(X,p)$ to denote the distribution obtained by thinning $X$ by $p$
  (i.e., the sum of $X$ independent $\Ber(p)$ random variables).
  
  \begin{prop}\thlabel{lem:random.sums}
    Let $X_1,X_2,\ldots$ be i.i.d.\ and NBUEZT. Let $p=\P[X_i\geq 1]$, and
    let $L$ be a random variable taking nonnegative integer values, independent of $X_1,X_2,\ldots$,
    such that $\Bin(L,p)$ is NBUEZT. Then $X_1+\cdots+X_L$ is NBUEZT.
  \end{prop}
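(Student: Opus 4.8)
The plan is to reduce the random sum $S = X_1 + \cdots + X_L$ to its zero-truncated version $S^>$ and exploit the structure of NBUEZT as "$X^>$ is NBUE". First I would condition on the number of nonzero summands. Let $N = \#\{i \le L : X_i \ge 1\}$, so that $N \eqd \Bin(L,p)$ with $p = \P[X_i \ge 1]$. On the event $\{S > 0\}$, equivalently $\{N \ge 1\}$, we may discard the zero summands and write $S \eqd \sum_{j=1}^{N} Y_j$ where the $Y_j$ are i.i.d.\ copies of $X_i^> = [X_i \mid X_i \ge 1]$, independent of $N$ given $N \ge 1$. Thus $S^> \eqd \sum_{j=1}^{N^>} Y_j$, a random sum of $N^> = [N \mid N \ge 1]$ i.i.d.\ copies of the NBUE variable $Y_1 \eqd X_1^>$. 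Since $\Bin(L,p)$ is NBUEZT by hypothesis, $N^> = (\Bin(L,p))^>$ is NBUE.

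The second step is then to apply \thref{prop:NBUE.sum}: $S^>$ is an NBUE-distributed ($N^>$) sum of i.i.d.\ NBUE summands ($Y_j \eqd X_1^>$), hence $S^>$ is NBUE. By the definition of NBUEZT, this is exactly the assertion that $S = X_1 + \cdots + X_L$ is NBUEZT, completing the proof. The only subtlety to be careful about is the joint distribution: after deleting the zeros, is the count $N$ independent of the values $(Y_j)$? This holds because the $X_i$ are i.i.d., so conditionally on which indices are nonzero (hence on $N$), the nonzero values are i.i.d.\ copies of $X_1^>$ and do not depend further on $N$ or on $L$; one writes this out by conditioning first on $L$, then on the pattern of nonzero coordinates. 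Likewise one should check the edge cases, e.g.\ $\P[L=0]>0$ is harmless since we only ever condition on $S>0$, and $p=1$ gives $N=L$ directly.

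I expect the main obstacle to be purely bookkeeping: making the decomposition $S^> \eqd \sum_{j=1}^{N^>} Y_j$ with the correct independence structure fully rigorous, rather than any hard inequality — all the analytic content is already packaged in \thref{prop:NBUE.sum}. A clean way to phrase it is via the mixture language of \thref{lem:equib.mixture}: condition on $L=n$, thin to get $\Bin(n,p)$ nonzero terms, and note the resulting law of the nonzero block is the same for every realization of which coordinates survive. Alternatively, one can invoke the standard fact that thinning commutes with i.i.d.\ sums (the nonzero entries of $(X_1,\dots,X_L)$, conditioned on their count, are i.i.d.\ $X_1^>$), which is elementary. Once that identity is in hand, the proof is a one-line appeal to \thref{prop:NBUE.sum}.
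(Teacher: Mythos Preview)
Your proposal is correct and matches the paper's proof essentially line for line: the paper lets $M$ (your $N$) be the number of nonzero summands, writes $S^>\eqd\sum_{k=1}^{M^>}X_k^>$ with $M\sim\Bin(L,p)$, and then invokes \thref{prop:NBUE.sum}. The independence bookkeeping you flag is exactly what the paper asserts without elaboration, so your extra care there is appropriate but not a departure in method.
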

  \begin{proof}
    Let $S=X_1+\cdots+X_L$, and let $M$ be the number of the random variables $X_1,\ldots,X_L$
    that are nonzero. Then
    \begin{align}\label{eq:Srep}
      S\eqd \sum_{k=1}^M X_k^>,
    \end{align}
    where $M\sim\Bin(L,p)$ is independent of $(X_i^>)_{i\geq 1}$.
    Since $S$ is then a sum of $M$ many strictly positive random variables, it is
    positive if and only if $M$ is positive. Hence
    \begin{align*}
      S^>\eqd \sum_{k=1}^{M^>} X_k^>.
    \end{align*}
    Since $M$ and $X_k$ are NBUEZT, their conditioned versions $M^>$ and $X_k^>$
    are NBUE. Thus $S^>$ is NBUE by \thref{prop:NBUE.sum}, and hence $S$ is NBUEZT.
  \end{proof}
  
  To apply \thref{lem:random.sums}, the NBUEZT property for $L$ must be preserved under thinning.
  We now show that this holds when $L^>$ is D-IFR.
  
  \begin{lemma}\thlabel{lem:IFR.thinning}
    Let $L$ be a random variable taking nonnegative integer values.
    If $L^>$ is D-IFR, then $\Bin(L,p)^>$ is D-IFR for all $0< p\leq 1$.
  \end{lemma}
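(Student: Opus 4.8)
The plan is to use the residual-life description of the D-IFR property from \thref{prop:reliability}\ref{i:ifr.char} together with a coupling that conditions on the locations of the first few successes in the thinning. Since $\Bin(L,p)>0$ forces $L\ge 1$, one has $\P[\Bin(L,p)=k]=\P[L>0]\,\P[\Bin(L^>,p)=k]$ for all $k\ge 1$, hence $\Bin(L,p)^>\eqd\Bin(L^>,p)^>$, so it suffices to take a positive-integer-valued D-IFR variable $M$ and prove that $\Bin(M,p)^>$ is D-IFR. By \thref{prop:reliability}\ref{i:ifr.char} the hypothesis says precisely that $[M-j\mid M\ge j]$ is stochastically decreasing in $j\ge 1$, and the same statement reduces the goal to showing that $[\Bin(M,p)-k\mid\Bin(M,p)>k]$ is stochastically decreasing in $k\ge 0$ (for $k\ge 1$ this is the residual life of $\Bin(M,p)^>$, and for $k=0$ it is $\Bin(M,p)^>$ itself).

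For the coupling I would realize $\Bin(M,p)$ as $\sum_{i=1}^{M}B_i$ with $B_1,B_2,\dots$ i.i.d.\ $\Ber(p)$ independent of $M$, and let $\tau_m$ be the index of the $m$th one in this sequence, so $\tau_m$ is independent of $M$ and is a sum of $m$ i.i.d.\ geometric variables. On the event $\{\Bin(M,p)>k\}=\{\tau_{k+1}\le M\}$ one has $\Bin(M,p)=(k+1)+\sum_{i=\tau_{k+1}+1}^{M}B_i$, and conditionally on $\{\tau_{k+1}=t,\,M=n\}$ the coins $B_{t+1},\dots,B_n$ are still i.i.d.\ $\Ber(p)$; hence $[\Bin(M,p)-k\mid\Bin(M,p)>k]\eqd 1+\Bin(R_k,p)$, where $R_k$ has the law of $M-\tau_{k+1}$ conditioned on $\{\tau_{k+1}\le M\}$ and the binomial is formed with fresh coins. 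Writing $\P[M=j+l]=\P[M\ge j]\,\P[M-j=l\mid M\ge j]$ then exhibits $R_k$ as the mixture $\sum_j w^{(k)}_j\,[M-j\mid M\ge j]$, where $w^{(k)}$ is the probability law on $j$ with $w^{(k)}_j\propto\P[\tau_{k+1}=j]\,\P[M\ge j]$ (its normalizer is $\P[\Bin(M,p)>k]$).

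Now for the monotonicity in $k$. Since $1+\Bin(\cdot,p)$ is stochastically monotone it suffices to show $R_k$ is stochastically decreasing in $k$; since the family $j\mapsto[M-j\mid M\ge j]$ is stochastically decreasing, it is in turn enough to show $w^{(k+1)}\succeq w^{(k)}$ (mixing a stochastically decreasing family against a stochastically larger mixing law gives a stochastically smaller mixture, via a Strassen coupling). From the explicit negative-binomial weights, $\P[\tau_{k+2}=j]/\P[\tau_{k+1}=j]=(j-k-1)p/((k+1)(1-p))$, which is nondecreasing in $j$, so $\tau_{k+2}$ dominates $\tau_{k+1}$ in the likelihood-ratio order; multiplying both laws by the common nonnegative weight $j\mapsto\P[M\ge j]$ leaves this ratio unchanged, so $w^{(k+1)}$ dominates $w^{(k)}$ in the likelihood-ratio order and a fortiori in the usual stochastic order. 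Combining the displayed equivalence, the mixture comparison, and \thref{prop:reliability}\ref{i:ifr.char} finishes the proof.

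The step that takes the real work is choosing this coupling --- conditioning on the positions of the first $k+1$ successes --- after which the D-IFR hypothesis on $M$ enters only through the stochastic monotonicity of its residual life and propagates through the likelihood-ratio comparison of the $\tau_m$; the remaining manipulations of conditional laws are routine. One could instead deduce the lemma from the fact that binomial thinning preserves log-concavity of the survival sequence (equivalently, that the pure-death semigroup, with generator $(Ae)_k=(k+1)e_{k+1}-ke_k$, keeps the cone of log-concave sequences forward-invariant), but that route requires the more delicate flow-invariance analysis for a non-convex cone, and the coupling argument avoids it.
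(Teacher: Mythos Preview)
Your proof is correct and follows essentially the same route as the paper: both condition on the location $\tau_{k+1}$ (the paper's $T_n$) of the first successes, establish that $[\tau_{k+1}\mid\tau_{k+1}\le M]$ is stochastically increasing in $k$ via the same negative-binomial likelihood-ratio computation, and feed in the D-IFR hypothesis through the stochastic monotonicity of the residual lives $[M-j\mid M\ge j]$. The only difference is cosmetic---you target the residual-life characterization from \thref{prop:reliability}\ref{i:ifr.char}, whereas the paper works directly with the failure rate via $\varphi(t)=\E[(1-p)^{L-t}\mid L\ge t]$, which is exactly your $\P[\Bin(R_k,p)=0]$ unpacked pointwise.
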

  \begin{proof}
    Let $(B_k)_{k\geq 1}$ be i.i.d.-$\Ber(p)$ for arbitrary $p\in(0,1)$, and let
    $M=B_1+\cdots+B_L$, so that $M\sim\Bin(L,p)$.
    Our goal is to show that $\P[M=n\mid M\geq n]$ is increasing for $n\geq 1$.
    
    Define
    \begin{align*}
      \varphi(t) = \E\bigl[ (1-p)^{L-t} \bigmid L\geq t \bigr],
    \end{align*}
    which is the conditional probability that $B_{t+1}=\cdots=B_L=0$ given that $L\geq t$.
    Let $T_n$ be the smallest index $t$ such that $B_1+\cdots+B_t=n$.
    We make the following claims:
    \begin{enumerate}[(i)]
      \item $\P[M=n\mid M\geq n] = \E\bigl[\varphi(T_n) \bigmid L\geq T_n \bigr]$;
        \label{i:s1}
      \item the function $\varphi(t)$ is increasing for integers $t\geq 1$;
        \label{i:s2}
      \item the distributions $[T_n \mid L\geq T_n]$ are stochastically increasing in $n$.
        \label{i:s3}
    \end{enumerate}
    To prove \ref{i:s1}, we start by observing that $M=n$ holds if and only $L\geq T_n$
    and $B_{T_n+1},\ldots,B_L$ are all zero. Thus,
    \begin{align*}
      \P[M = n] &= \P[\text{$B_{T_n+1}=\cdots=B_L=0$ and $L\geq T_n$}]\\
                &= \sum_{t,\ell}\P[T_n=t,\,L=\ell,\,B_{t+1}=\cdots=B_\ell=0]\1\{\ell\geq t\}\\
                &= \sum_{t,\ell}\P[T_n=t,\,L=\ell](1-p)^{\ell-t}\1\{\ell\geq t\}\\
                &= \sum_t \P[T_n=t]\sum_\ell \P[L=\ell](1-p)^{\ell-t}\1\{\ell\geq t\}\\
                &= \sum_t \P[T_n=t]\,\E\bigl[ (1-p)^{L-t}\1\{L\geq t\}\bigr]
                = \sum_t \P[T_n=t]\varphi(t)\P[L\geq t].
    \end{align*}
    Since $L$ and $T_n$ are independent, we obtain
    \begin{align*}
      \P[M = n] &= \sum_t \P[T_n=t,\,L\geq t]\varphi(t)\\
        &= \sum_{t,\ell}\P[T_n=t,\,L=\ell]\varphi(t)\1\{\ell\geq t\}
        = \E\bigl[\varphi(T_n)\1\{L\geq T_n\}\bigr].
    \end{align*}
    Observing that the events $\{M\geq n\}$ and $\{L\geq T_n\}$ are the same and dividing
    both sides of the above equation by its probability yields \ref{i:s1}.
    
    Now we prove \ref{i:s2}. Since $L$ is D-IFR, the distributions $[L-t\mid L\geq t]$
    are stochastically decreasing by \thref{prop:reliability}\ref{i:ifr.char}. Thus $\varphi(t)$ is obtained
    by taking the expectation of the decreasing function $x\mapsto (1-p)^x$ under a stochastically
    decreasing sequence of distributions, showing that $\varphi(t)$ is increasing in $t$.
    
    To prove \ref{i:s3}, it suffices (see \cite[Theorem~1.C.1]{SS}) to show that
    \begin{align}\label{eq:s3}
      \frac{\P[T_{n+1}=k\mid L\geq T_{n+1}]}{\P[T_n=k\mid L\geq T_n]}
      \quad\quad\text{is increasing in $k$ for $k\geq n$.}
    \end{align}
    Thus we consider
    \begin{align*}
      \frac{\P[T_{n+1}=k\mid L\geq T_{n+1}]}{\P[T_n=k\mid L\geq T_n]}
        &= \frac{\P[T_{n+1}=k,\,L\geq k]}{\P[L\geq T_{n+1}]}
        \cdot \frac{\P[L\geq T_n]}{\P[T_n=k,\,L\geq k]}\\
        &= \frac{\P[T_{n+1}=k]}{\P[T_n=k]}\cdot \frac{\P[L\geq T_n]}{\P[L\geq T_{n+1}]},
    \end{align*}
    with the second line following from the independence of $T_n$ and $T_{n+1}$ from $L$.
    The final bit is to compute probabilities for the negative binomial distribution:
    \begin{align*}
      \frac{\P[T_{n+1}=k]}{\P[T_n=k]} &= 
        \frac{\binom{k-1}{n}p^{n+1}(1-p)^{k-n-1}}{\binom{k-1}{n-1}p^n(1-p)^{k-n}} = \frac{(k-n)p}{n(1-p)}.
    \end{align*}
    This is increasing in $k$ for $k\geq n$, which proves \eqref{eq:s3}.
    
    Now, statements \ref{i:s1}--\ref{i:s3} combine to prove the lemma:
     the quantity $\E[\varphi(T_n)\mid  L\geq T_n]$
     is increasing in $n$ by \ref{i:s2} and \ref{i:s3}, and hence $M^>$ is D-IFR by \ref{i:s1}.
  \end{proof}

  \begin{proof}[Proof of \thref{thm:GW2}]
    Let $L$ be the child distribution of the tree.
    By \thref{prop:reliability}\ref{i:hierarchy} and \thref{lem:IFR.thinning},
    all thinnings of $L$ are NBUEZT. Hence \thref{lem:random.sums}
    applies and shows that
    \begin{align*}
      \sum_{k=1}^L X_k
    \end{align*}
    is NBUEZT whenever $(X_k)_k\geq 1$ are an i.i.d.\ family of NBUEZT random variables.
    Applying this inductively to each generation $Z_n$ of the Galton--Watson process
    shows that $Z_n$ is NBUEZT for all $n$.
    Therefore \thref{thm:concentration,thm:concentration.lower} apply to $Z_n^>$
    and prove \eqref{eq:GW2.upper} and \eqref{eq:GW2.lower}.
  \end{proof}

  \subsection{On the sharpness and optimality of these results}
  \label{sec:GW.optimality}
  Consider a Galton--Watson process whose child distribution is geometric
  with success probability $p$ on $\{1,2,\ldots\}$, which is NBUE.
  The size of the $n$th generation is
  geometric with success probability $p^n$. Then $Z_n/\mu^n\to\Exp(1)$ in law as $n\to\infty$.
  Hence both upper and lower bounds in \thref{thm:GW1}
  are sharp, besides the extra factor of $e$ in the upper bound.

  If $Z_n$ is the $n$th generation of a critical Galton--Watson tree with $m(n)=\E [Z_n\mid Z_n>0]$,
  then $Z_n/m(n)\to\Exp(1)$ in law as $n\to\infty$ \cite[Theorem~I.9.2]{AthreyaNey}.
  Thus \thref{thm:GW2} provides optimal bounds in this case, again besides the extra factor of $e$
  in the upper bound.
  
  We can also ask whether the conditions of \thref{thm:GW1,thm:GW2} 
  could be weakened and more broadly what properties of the child distribution are
  preserved for all generations.
  \thref{thm:GW1} states that if the child distribution of a Galton--Watson
  process is NBUE, then all its generations are NBUE.
  Log-concave and D-IFR distributions are not preserved in this way.
  For a counterexample, consider a child distribution placing probability $1/8$ on $1$,
  probability $49/64$ on $2$, and probability $7/64$ on $3$. This distribution is
  log-concave and D-IFR, but the size of the second generation is neither.
    
  \thref{lem:IFR.thinning} states that if $L^>$ is D-IFR, then $\Bin(L,p)^>$ is D-IFR.
    The NBUEZT property is not preserved under thinning in this way. Let
    \begin{align*}
      L=\begin{cases}
                1 & \text{with probability $89/100$,}\\
                2 & \text{with probability $109/1000$,}\\
                3 & \text{with probability $9/10000$,}\\
                4 & \text{with probability $1/11250$,}\\
                5 & \text{with probability $1/90000$.}
      \end{cases}
    \end{align*}
    We leave it as an exercise that this distribution
    is NBUEZT (in fact, NBUE) but that all of its thinnings fail to be NBUEZT.
    
    It seems more natural (and would be a weaker condition) to assume only
    that the child distribution $L$ is NBUEZT in \thref{thm:GW2}, rather
    than that $L^>$ is D-IFR. But since    
    the NBUEZT property is not preserved by thinning, our proof of
    \thref{thm:GW2} does not go through with this change. 
    In fact, we are truly unsure the theorem holds with the weaker
    condition.

  \subsection{Previous concentration results for Galton--Watson processes}
  \label{sec:GW.review}  
  Let $Z_n$ be the $n$th generation of a Galton--Watson process whose child distribution has mean $\mu>1$.
  Let $W$ be the almost sure limit of $Z_n/\mu^n$, which exists and is nondegenerate
  when $\E[Z_1\log Z_1]<\infty$. 
  In \thref{thm:GW1,thm:GW2}, properties of the child distribution continue to hold
  for $Z_n/\mu^n$ at all generations. This is in a similar spirit to many results linking
  properties of the child distribution to those of $W$. For example, for $\alpha>1$ it holds
  that $\E Z_1^\alpha$ is finite if and only if $\E W^\alpha$ is finite \cite{BD74}.
  Similarly, $Z_1$ has a regularly varying distribution with index $\alpha>1$ if and only if $W$ does
  \cite{deMeyer}.

  One line of results is on the right tail when the child distribution is bounded.
  Let $d$ be its maximum value, and let $\gamma=\log d/\log\mu>1$. Biggins and Bingham \cite{BB91} 
  used a classic result of Harris \cite{Harris48} to show that
  \begin{align}\label{eq:BB.upper}
    -\log\P[W\geq x] = x^{\gamma/(\gamma-1)}N(x) + o\bigl(x^{\gamma/(\gamma-1)}\bigr),
  \end{align}
  where $N(x)$ is a continuous, multiplicatively periodic
  function. Hence, in the limit the tail of $Z_n/\mu_n$ decays faster than exponentially.
  Fleischmann and Wachtel give a more precise version of this result \cite[Remark~3]{FW2}, showing
  that the tail of $W$ decays as
  \begin{align}\label{eq:FW.upper}
    N_2(x)x^{-\gamma/2(\gamma-1)}\exp\Bigl(-x^{\gamma/(\gamma-1)}N(x)\Bigr),
  \end{align}
  where $N(x)$ and $N_2(x)$ are continuous, multiplicatively periodic function.
  Biggins and Bingham give a version of their result that applies directly to $Z_n$ rather than its
  limit, and more detailed results on the right tail of $Z_n$ in this situation can also be obtained
  from combinatorial results of Flajolet and Odlyzko \cite[Theorem 1]{FO}.
  
  Results on the right tail are also available when the child distribution is heavy-tailed.
  When $Z_1$ satisfies
  \begin{align*}
    \sup_x \frac{\P[Z_1>x/2]}{\P[Z_1>x]} < \infty,
  \end{align*}
  the tails of $Z_n$ satisfy
  \begin{align*}
    c_1\P[Z_1>x] \leq \P[Z_n>\mu^nx] \leq c_2\P[Z_1>x]
  \end{align*}
  for constants $c_1>0$ and $c_2<\infty$ independent of $x$ and $n$ \cite[Theorem~1]{DKW}.
  This result applies, for instance, when the tail of $Z_1$ has polynomial decay.
  A similar result \cite[Theorem~3]{DKW} holds when the tail of $Z_1$ behaves
  like $e^{-x^\alpha}$ for $0<\alpha<1$.
  
  For the left tail of $Z_n$, the behavior depends on the weight
  that the child distribution places on $0$ and $1$. It is known as the
  \emph{Schr\"oder case} when positive weight is placed on those values
  and as the \emph{B\"ottcher case} when it is not. Roughly speaking, the left tail 
  in the Schr\"oder case behaves similarly to the right tail in the heavy-tailed case,
  while the left tail in the B\"ottcher case behaves similarly to the right tail in
  the bounded child distribution case.
  For example, suppose that the child distribution places no weight on $0$ and weight $p_1$ on $1$.
  In the Schr\"oder case, where $p_1> 0$, let $\alpha=-\log p_1 /\log \mu$.
  Then $\P[W\leq x]$ behaves like $x^\alpha$ as $x\to 0$ \cite{DubucLoi}.
  Note that $\alpha=1$ for a geometric child distribution, coinciding with the lower tail
  bound we prove in \thref{thm:GW1}.
  For the B\"ottcher case, where $p_1=0$, let $d\geq 2$ be the minimum value taken by child distribution,
  and let $\beta = \log d/\log\mu\in(0,1)$. Then $-\log\P[W\leq x]$ behaves like
  $x^{-\beta/(1-\beta)}$. A result like \eqref{eq:BB.upper} is shown
  in \cite[Theorem~3]{BB91}, and finer asymptotics along the lines of \eqref{eq:FW.upper}
  are given in \cite[Theorem~1]{FW2}.
  
  Our results apply best to distributions that are unbounded
  but have exponential tails, a case that seems poorly covered
  by the existing literature. Our bound is also more explicit than any we have encountered,
  with no limits or unspecified constants.
    
\section*{Appendix A}
Recall that $P_n^l(b,w)$ is the distribution of the number of white balls after $n$ draws
in the urn model defined in Section~\ref{sec:urns}, and let $N_n(b,w)\sim P_n^l(b,w)$.
This appendix is dedicated to proving the following result, which is used
in \thref{lem:unfactorialize} to compare the rising factorial bias transform of these distributions
to their power bias transforms.
 The property proven in the following lemma is something like log-concavity of the sequence
 $\P[N_n(b,w)=k]$ for fixed $n$
 (which is proven along the way, in \thref{lem:log.concave}), but it involves varying both $k$ and $n$.
 We cannot give much intuition for the proof; it seems to us to be a technical fact that happens
 to be true and can be proven by pushing symbols around in the right way.
 \begin{lemma}\thlabel{lem:lc.variant}
   For all $n,k\geq 0$,
   \begin{align}\label{eq:lc.variant}
     \P[N_n(b,w)=k-1]\,\P[N_{n+1}(b,w)=k+1] &\leq \P[N_n(b,w)=k]\,\P[N_{n+1}(b,w)=k].
   \end{align}
 \end{lemma}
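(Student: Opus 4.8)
The plan is to reduce \eqref{eq:lc.variant} to a one-step statement about the urn and prove it by induction on $n$. Write $p_n(k) = \P[N_n(b,w) = k]$ and let $D_n = b+w+n+\lfloor n/l\rfloor$ be the number of balls in the urn immediately before the $(n+1)$st draw (initially $b+w$ balls, one added per draw, one extra black ball after each multiple of $l$). Conditioning on that draw gives the forward recursion
\[
  p_{n+1}(k) = D_n^{-1}\bigl((k-1)p_n(k-1) + (D_n-k)p_n(k)\bigr).
\]
Substituting this for $p_{n+1}(k)$ and $p_{n+1}(k+1)$ into \eqref{eq:lc.variant} and clearing the factor $D_n$ shows that \eqref{eq:lc.variant} for a given $n$ is equivalent to the single-time inequality
\[
  p_n(k-1)p_n(k) + (D_n-k-1)p_n(k-1)p_n(k+1) \le (D_n-k)p_n(k)^2,
\]
which I will call $(\star)_n$. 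So it suffices to prove $(\star)_n$ for all $n,k\ge 0$; the case $n=0$ is immediate since $p_0$ is a point mass, and values of $k$ outside the support $\{w,\dots,w+n\}$ make $(\star)_n$ degenerate, so I treat $k$ as interior.

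I would run this induction jointly with \thref{lem:log.concave}, the log-concavity statement $p_n(k)^2\ge p_n(k-1)p_n(k+1)$. The log-concavity half of the step is the easy one: substituting the forward recursion into both sides and using the inductive log-concavity of $p_{n-1}$ at indices $k-1$ and $k$ (hence also $p_{n-1}(k-2)p_{n-1}(k+1)\le p_{n-1}(k-1)p_{n-1}(k)$), the difference of the two sides collapses, after a short computation with the explicit urn coefficients, to a perfect square; that $p_n$ has no internal zeros is clear because its support is the full interval $\{w,\dots,w+n\}$.

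The substance is the $(\star)_n$ half of the step. If in $(\star)$ we write a generic parameter $\mathrm D$ in place of $D_n$, then (right side) minus (left side) is affine in $\mathrm D$ with slope $p_n(k)^2 - p_n(k-1)p_n(k+1)\ge 0$ by log-concavity of $p_n$; so $(\star)$ with parameter $\mathrm D$ only gets stronger as $\mathrm D$ decreases, and since $D_{n-1}\le D_n$, once log-concavity of $p_n$ is in hand (established above) it suffices to prove the $\mathrm D=D_{n-1}$ version of $(\star)_n$. Substituting the forward recursion once more converts this into a polynomial inequality in the four consecutive values $p_{n-1}(k-2),p_{n-1}(k-1),p_{n-1}(k),p_{n-1}(k+1)$ with coefficients explicit in $k$ and $D_{n-1}$, and now its parameter $D_{n-1}$ matches that of the inductive hypotheses, namely log-concavity of $p_{n-1}$ and $(\star)_{n-1}$, each at indices $k-1$ and $k$.

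The remaining task is to deduce that polynomial inequality from those four hypotheses by writing it as a combination of them with coefficients that are nonnegative over the relevant range of $k$ --- peeling off terms in the right order, with a case split on the sign of one coefficient, and checking the few boundary values of $k$ directly. This is where essentially all the difficulty lies: a naive combination of the four ``obvious'' inequalities does not yield $(\star)_n$ in any transparent way, and, as the authors themselves warn, one simply has to push the symbols around correctly. I expect this final bookkeeping to be by far the most delicate and error-prone part of the argument, with everything else routine.
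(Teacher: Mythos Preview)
Your strategy is essentially the paper's: the single-time inequality you call $(\star)_n$ is precisely the paper's \thref{lem:lc.tech}, and the paper proves it by the same scheme --- induction on $n$, log-concavity (the paper's \thref{lem:log.concave}, established separately by the perfect-square argument you sketch), and a monotonicity-in-the-parameter reduction before the main algebraic expansion. The ``bookkeeping'' step you leave open is the actual content of the paper's proof of \thref{lem:lc.tech}: after the recursion is substituted, the difference is written as $A_1+A_2+A_3$, instances of $(\star)_{n-1}$ at indices $k-1$ and $k$ are peeled off from each piece, and the residual terms cancel on the nose. No case split on signs turns out to be needed.

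There is, however, a real slip in your parameter reduction. You propose to replace $D_n$ by $D_{n-1}$, on the grounds that a smaller parameter yields a stronger inequality and that this value coincides with the parameter of the inductive hypothesis $(\star)_{n-1}$. But that strengthened inequality is \emph{false}. Take $b=w=1$ and $l\ge 2$, so $D_0=2$, $p_1(1)=p_1(2)=\tfrac12$, $p_1(3)=0$; then $(\star)_1$ at $k=2$ with generic parameter $\mathrm D$ reads $(\mathrm D-2)\cdot\tfrac14-\tfrac14\ge 0$, i.e.\ $\mathrm D\ge 3$, which holds at the true value $D_1=3$ but fails at $\mathrm D=D_0=2$. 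The correct reduction --- and the one the paper makes --- is to the \emph{smallest admissible} value $D_n=D_{n-1}+1$ (one ball is always added per draw). After substituting the recursion everything is expressed in terms of $B:=D_{n-1}$ anyway, so the ``mismatch'' you were trying to avoid does not actually arise, and with this $+1$ correction the algebra closes exactly.
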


We will in fact prove \thref{lem:lc.variant} for a slightly
generalized version
of the urn process.
As with that process,
start with $b\geq 1$ black balls and $w\geq 1$ white balls, and after each draw
add an extra ball with the same color as the ball drawn.
Instead of adding an additional black ball after every $l$th draw,
we allow black balls to be added at arbitrary but predetermined times.
Thus the number of balls in the urn after $n$ draws, denoted by $B_n$, is an arbitrary but deterministic
strictly increasing sequence with
$B_0=b+w$. Let $N_n$ be the number of white balls in the urn after $n$ draws.
Let $\tq[n]{k}=\P[N_n=k]$.
The dynamics of the urn process gives
\begin{align}\label{eq:recurrence}
  \tq[n+1]{k} &= \biggl(\frac{k-1}{B_n}\biggr) \tq[n]{k-1} + \biggl(1 - \frac{k}{B_n}\biggr)\tq[n]{k}.
\end{align}

First, we show that $\tq[n]{k}$ is log-concave in $k$ for each fixed $n$:
\begin{lemma}\thlabel{lem:log.concave}
  For all $n\geq 0$ and all $k$,
  \begin{align}
    \bigl(\tq[n]{k}\bigr)^2 \geq \tq[n]{k-1}\tq[n]{k+1}.\label{eq:log.concave}
  \end{align}
\end{lemma}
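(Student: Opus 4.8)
The plan is to prove \thref{lem:log.concave} by induction on $n$, using the recurrence \eqref{eq:recurrence}. The base case $n=0$ is immediate, since $N_0=w$ deterministically and $(\tq[0]{k})_k$ is an indicator sequence. For the inductive step, note first that an easy induction using \eqref{eq:recurrence} shows that the support of $N_n$ is exactly the interval $\{w,w+1,\ldots,w+n\}$ with $\tq[n]{k}>0$ there, so $(\tq[n]{k})_k$ has no internal zeros; I will carry this along with \eqref{eq:log.concave} as the induction hypothesis. Fix $n$ and abbreviate $a_k=\tq[n]{k}$, $B=B_n$, and $c_k=\tq[n+1]{k}$, so that $Bc_k=(k-1)a_{k-1}+(B-k)a_k=:d_k$ by \eqref{eq:recurrence}. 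It then suffices to prove $d_k^2\geq d_{k-1}d_{k+1}$ for all $k$. Outside a bounded window of indices both sides vanish; inside it all the coefficients $k-1$, $B-k$, $B-k-1$ that appear below are nonnegative (here one uses $B_n\geq b+w+n\geq w+n+1$, so that every $k$ in the support of $N_{n+1}$ satisfies $k\leq B_n$), with the borderline endpoint cases forced to zero by a vanishing $a_j$.

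The core is an algebraic estimate. Writing $i=k-1$ and $m=B-k$, one has $d_k=ia_{k-1}+ma_k$, $d_{k-1}=(i-1)a_{k-2}+(m+1)a_{k-1}$, and $d_{k+1}=(i+1)a_k+(m-1)a_{k+1}$. Expanding $d_k^2$ and $d_{k-1}d_{k+1}$ as quadratic forms in $a_{k-2},a_{k-1},a_k,a_{k+1}$ and feeding in three consequences of the induction hypothesis --- namely $a_{k-2}a_k\leq a_{k-1}^2$, $a_{k-1}a_{k+1}\leq a_k^2$, and the cross inequality $a_{k-2}a_{k+1}\leq a_{k-1}a_k$ --- together with the elementary identities $i^2-(i^2-1)=1$, $m^2-(m^2-1)=1$, and $(i-1)(m-1)+(i+1)(m+1)=2im+2$, the difference collapses to
\begin{align*}
  d_k^2-d_{k-1}d_{k+1}\geq a_{k-1}^2-2a_{k-1}a_k+a_k^2=(a_{k-1}-a_k)^2\geq 0,
\end{align*}
which closes the induction. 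The cross inequality $a_{k-2}a_{k+1}\leq a_{k-1}a_k$ follows from log-concavity by chaining the ratio bounds $a_{k-1}/a_{k-2}\geq a_k/a_{k-1}\geq a_{k+1}/a_k$ when the relevant entries are positive, and from the absence of internal zeros otherwise (if $a_{k-2}$ and $a_{k+1}$ were both positive, so would be $a_{k-1}$ and $a_k$).

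As for the main obstacle: there is no conceptual difficulty here --- this is a fact that happens to be true and is proven by manipulating symbols. The only real care needed is the bookkeeping in the quadratic-form expansion and the handling of the degenerate cases, where some $a_j$ vanishes or $k$ lies at the edge of the relevant window; all of these are dispatched using the no-internal-zeros property, which is why it is worth carrying along in the induction rather than just the bare inequality \eqref{eq:log.concave}.
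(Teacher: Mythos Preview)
Your proposal is correct and follows essentially the same approach as the paper: induct on $n$, expand $\bigl(\tq[n+1]{k}\bigr)^2 - \tq[n+1]{k-1}\tq[n+1]{k+1}$ via the recurrence, apply the inductive hypothesis in the forms $a_{k-2}a_k\le a_{k-1}^2$, $a_{k-1}a_{k+1}\le a_k^2$, and $a_{k-2}a_{k+1}\le a_{k-1}a_k$, and collapse the remainder to $(a_{k-1}-a_k)^2\ge 0$. Your substitutions $i=k-1$, $m=B-k$ and the scaling $d_k=Bc_k$ are cosmetic; the paper groups the same six cross-terms into pieces $A_1,A_2,A_3$ and bounds them identically. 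Your explicit tracking of the no-internal-zeros property (needed to justify the cross inequality $a_{k-2}a_{k+1}\le a_{k-1}a_k$) and your remark that the nontrivial range $w+1\le k\le w+n$ forces all relevant coefficients nonnegative are points the paper leaves implicit, but the arguments coincide.
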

\begin{proof}
  We prove this by induction. For the base case, we have $\tq[0]{k} = \1\{k=w\}$,
  and hence the right-hand side of \eqref{eq:log.concave} is always zero when $n=0$.
  Now, we expand $\bigl(\tq[n+1]{k}\bigr)^2 - \tq[n+1]{k-1}\tq[n+1]{k+1}$ using
  \eqref{eq:recurrence} as $(A_1+A_2+A_3)/B_n^2$ for 
  \begin{align}
     A_1 &= (k-1)^2\tq{k-1}^2 - (k-2)k\tq{k-2}\tq{k},\label{eq:A1}\\
     A_2 &= (B_n-k)^2\tq{k}^2 - (B_n-k+1)(B_n-k-1)\tq{k-1}\tq{k+1},\label{eq:A2}\\
     A_3 &= 2(k-1)(B_n-k)\tq{k-1}\tq{k} - (k-2)(B_n-k-1)\tq{k-2}\tq{k+1} - k(B_n-k+1)\tq{k-1}\tq{k}\label{eq:A3},
  \end{align}
  where we have simplified notation by writing $\tq{k}$ for $\tq[n]{k}$.
  Applying the inductive hypothesis to \eqref{eq:A1} and \eqref{eq:A2} gives
  \begin{align*}
    A_1 &\geq \Bigl((k-1)^2-(k-2)k\Bigr)\tq{k-1}^2 = \tq{k-1}^2,\\\intertext{and}
    A_2 &\geq \Bigl((B_n-k)^2-(B_n-k+1)(B_n-k-1)\Bigr)\tq{k}^2 = \tq{k}^2.
  \end{align*}
  To bound $A_3$, we note that the inductive hypothesis implies $\tq{k-2}\tq{k+1}\leq \tq{k-1}\tq{k}$,
  which together with \eqref{eq:A3} gives
  \begin{align*}
    A_3 &\geq \Bigl(2(k-1)(B_n-k) -(k-2)(B_n-k-1) - k(B_n-k+1)\Bigr)\tq{k-1}\tq{k} = -2\tq{k-1}\tq{k}.
  \end{align*}
  Hence
  \begin{align*}
    A_1+A_2+A_3 &\geq \tq{k-1}^2 + \tq{k}^2- 2\tq{k-1}\tq{k} = (\tq{k-1}-\tq{k})^2\geq 0,
  \end{align*}
  thus extending the induction.
\end{proof}

Next, we establish a variant of log-concavity with a similar but more complicated proof.
\begin{lemma}\thlabel{lem:lc.tech}
  For all $n\geq 0$ and all $k$,
  \begin{align}\label{eq:lc.tech}
    (B_n  - k)\bigl(\tq[n]{k}\bigr)^2 - (B_n-k-1) \tq[n]{k-1}\tq[n]{k+1} - \tq[n]{k-1}\tq[n]{k} &\geq 0.
  \end{align}
\end{lemma}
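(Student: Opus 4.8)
The plan is to prove \thref{lem:lc.tech} by induction on $n$, in the spirit of the proof of \thref{lem:log.concave} but now carrying along \emph{both} \eqref{eq:log.concave} and \eqref{eq:lc.tech} at level $n$ in the inductive step; using \eqref{eq:log.concave} is legitimate since \thref{lem:log.concave} has already been established for all $n$. For the base case $n=0$, the identity $\tq[0]{k}=\1\{k=w\}$ makes the left-hand side of \eqref{eq:lc.tech} equal to $B_0-w=b\geq 1$ when $k=w$ and equal to $0$ otherwise, so it is nonnegative.

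For the inductive step, write $B=B_n$, $B'=B_{n+1}$ (so $B'\geq B+1$) and $\tq{j}$ for $\tq[n]{j}$, and substitute \eqref{eq:recurrence} in the cleared form $B\,\tq[n+1]{j}=(j-1)\tq{j-1}+(B-j)\tq{j}$ for $\tq[n+1]{k-1},\tq[n+1]{k},\tq[n+1]{k+1}$ into the left-hand side of \eqref{eq:lc.tech} at level $n+1$. After multiplying through by $B^2>0$ this becomes a quadratic form in $\tq{k-2},\tq{k-1},\tq{k},\tq{k+1}$ with coefficients that are explicit polynomials in $B$, $B'$, $k$. I would then regroup this form into a few pieces, as in the $A_1+A_2+A_3$ split of \thref{lem:log.concave}, so that each piece can be bounded below using one of: the log-concavity estimates $\tq{k-2}\tq{k}\leq\tq{k-1}^2$, $\tq{k-1}\tq{k+1}\leq\tq{k}^2$, $\tq{k-2}\tq{k+1}\leq\tq{k-1}\tq{k}$ coming from \eqref{eq:log.concave}; or the inductive hypothesis \eqref{eq:lc.tech} at index $k$ (which controls a $(B-k)\tq{k}^2$-type term) and at index $k-1$ (which bounds the term $\tq{k-2}\tq{k-1}$ above by $(B-k+1)\tq{k-1}^2-(B-k)\tq{k-2}\tq{k}$). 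The hope is that after these substitutions the surviving expression collapses to something manifestly nonnegative---plausibly a positive multiple of $(\tq{k-1}-\tq{k})^2$ together with a term proportional to $B'-B-1\geq 0$---exactly as the corresponding cancellation gave $(\tq{k-1}-\tq{k})^2$ in \thref{lem:log.concave}. Along the way one checks that each inequality is applied with a nonnegative coefficient, using that $\tq{j}\neq 0$ forces $w\leq j\leq w+n<b+w+n\leq B$, so that $B-k$, $B'-k-1$ and similar quantities are positive in the relevant ranges; and the boundary values of $k$ at which some $\tq{j}$ vanishes are handled directly, since there almost every monomial disappears.

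I expect the main obstacle to be purely the combinatorial bookkeeping: compared with \thref{lem:log.concave}, whose expansion involved only four monomial types and split cleanly into three groups, the form arising here carries about twice as many monomials and the extra parameter $B'$, so the work lies in identifying the right nonnegative combination of \eqref{eq:log.concave} and of the two instances of the inductive hypothesis to apply to each group so that the remainder is visibly a sum of squares. No idea beyond the one already used for \thref{lem:log.concave} should be required---only lengthy symbol pushing---which is why the argument is deferred to the appendix.
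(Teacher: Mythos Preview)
Your overall plan---induction on $n$, expand $\Eq[n+1]{k}$ via the recurrence \eqref{eq:recurrence}, and bound pieces using the inductive hypothesis at $k$ and $k-1$ together with log-concavity---is the paper's strategy, and your base case is correct. The paper also handles the extra parameter $B'=B_{n+1}$ in essentially the way you anticipate: since $\Eq[n+1]{k}$ is increasing in $B_{n+1}$ by \thref{lem:log.concave}, it suffices to take $B_{n+1}=B_n+1$.

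However, the step you describe as ``purely combinatorial bookkeeping'' hides a genuine idea that your plan misses. Write $a=\tq{k-2}$, $b=\tq{k-1}$, $c=\tq{k}$, $d=\tq{k+1}$, $p=k-1$, $q=B_n-k$. With $B_{n+1}=B_n+1$ one finds the exact identity
\[
  B_n^2\,\Eq[n+1]{k}
  = (p-1)p\,\Eq{k-1} + (q+1)q\,\Eq{k}
    + q(p-1)\,\frac{\Eq{k}\,a+\Eq{k-1}\,c}{b},
\]
which is manifestly nonnegative by the inductive hypothesis. The crucial point is the third term: the inductive hypothesis is applied \emph{nonlinearly}, multiplied by the ratios $a/b$ and $c/b$, via the identity
\[
  (q+1)bc-(q-1)ad-2ac=\frac{\Eq{k}\,a+\Eq{k-1}\,c}{b}.
\]
Your plan---take a nonnegative \emph{constant} linear combination of $\Eq{k}$, $\Eq{k-1}$, and the three log-concavity inequalities $b^2\ge ac$, $c^2\ge bd$, $bc\ge ad$, leaving a sum-of-squares remainder---cannot succeed. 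Concretely, with $p=3$, $q=2$ the point $(a,b,c,d)=(3,2,1,0)$ satisfies $\Eq{k}=\Eq{k-1}=0$ while all three log-concavity inequalities are strict, and $B_n^2\Eq[n+1]{k}=0$ there; this forces the multipliers on the log-concavity inequalities to vanish. But neither $\Eq{k}$ nor $\Eq{k-1}$ contains an $ad$ term, so the remainder then has $ad$-coefficient $-q(p-1)(q-1)<0$ and no $a^2$ or $d^2$ terms, hence is negative along the ray $(t,0,0,t)$. So no ``nice'' remainder like $(\tq{k-1}-\tq{k})^2$ exists: after the correct bounds the surplus collapses to exactly $0$, and getting there requires the nonlinear use of the inductive hypothesis above.
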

\begin{proof}
   We proceed by induction. 
   Let $\Eq[n]{k}$ be the left-hand side of \eqref{eq:lc.tech}.
   Since $\tq[0]{k} = \1\{k=w\}$ and $B_0=w+b$, we have
   $\Eq[0]{k} = b\1\{k=w\}$, demonstrating \eqref{eq:lc.tech} 
   when $n=0$. Now we assume $\Eq[n]{k}\geq 0$ for all $k$, and we show $\Eq[n+1]{k}\geq 0$ 
   for all $k$. It suffices to prove $\Eq[n+1]{k}\geq 0$ under the assumption
   that $B_{n+1}=B_n+1$, because $B_{n+1}$ is at least this large, and we can
   see that $\Eq[n+1]{k}$ is increasing in $B_{n+1}$ by writing it as
   as
   \begin{align*}
     \Eq[n+1]{k} = (B_{n+1}  - k-1)\Bigl[\bigl(\tq[n+1]{k}\bigr)^2 - \tq[n+1]{k-1}\tq[n+1]{k+1}\Bigr]
        + \bigl(\tq[n+1]{k}\bigr)^2 - \tq[n+1]{k-1}\tq[n+1]{k}
   \end{align*}
   and applying \thref{lem:log.concave}.
   
   For the sake of readability, we write
   $\tq{k}$ for $\tq[n]{k}$ and $B$ for $B_n$ in this proof. 
   We apply \eqref{eq:recurrence} to obtain
   \begin{align*}
     \bigl(\tq[n+1]{k}\bigr)^2 &= \frac{1}{B^2}\Bigl(
         (k-1)^2\tq{k-1}^2 + 2(k-1)(B-k)\tq{k-1}\tq{k} + (B-k)^2\tq{k}^2\Bigr),\\
    \tq[n+1]{k-1}\tq[n+1]{k+1} 
                                   &= \frac{1}{B^2}\Bigl((k-2)k\tq{k-2}\tq{k} + (k-2)(B-k-1)\tq{k-2}\tq{k+1}\\
                               &\qquad\qquad+k(B-k+1)\tq{k-1}\tq{k} + (B-k-1)(B-k+1)\tq{k-1}\tq{k+1}\Bigr),\\
    \tq[n+1]{k-1}\tq[n+1]{k} &=\frac{1}{B^2}\Bigl((k-2)(k-1)\tq{k-2}\tq{k-1} + (k-2)(B-k)\tq{k-2}\tq{k}\\
    & \qquad\qquad+
       (k-1)(B-k+1)\tq{k-1}^2 + (B-k)(B-k+1)\tq{k-1}\tq{k}\Bigr).
   \end{align*}
   Now, under the assumption that $B_{n+1}=B+1$, we expand $\Eq[n+1]{k}$ as $(A_1+A_2+A_3)/B^2$, where
   \begin{align*}
     A_1 &= (k-2)\Bigl((B-k+1)(k-1) \tq{k-1}^2
                        -(k+1)(B-k)\tq{k-2}\tq{k}
                        -(k-1) \tq{k-2}\tq{k-1}\Bigr)\\
         &= (k-2)\Bigl((k-1)\Eq{k-1} - 2(B-k)\tq{k-2}\tq{k}\Bigr) \geq -2(k-2)(B-k)\tq{k-2}\tq{k},
   \end{align*}
   and
   \begin{align*}
     A_2 &= (B-k+1)(B-k)\Bigl((B-k)\tq{k}^2 - (B-k-1)\tq{k-1}\tq{k+1}\Bigr)\\
         &= (B-k+1)(B-k)\Bigl( \Eq{k}+\tq{k-1}\tq{k} \Bigr) \geq (B-k+1)(B-k)\tq{k-1}\tq{k},
   \end{align*}
   and
   \begin{align*}
     A_3 &= (B-k+1)(B-k)(k-3)t_{k-1}t_k -(k-2)(B-k)(B-k-1)t_{k-2}t_{k+1}\\
       &= (B-k)\Bigl( (k-2)\bigl[ (B-k+1)\tq{k-1}\tq{k} - (B-k-1)\tq{k-2}\tq{k+1}\bigr] -(B-k+1)\tq{k-1}\tq{k}\Bigr)\\
       &= (B-k)\biggl( (k-2)\biggl[\frac{\Eq{k}\tq{k-2} + \Eq{k-1}\tq{k}}{\tq{k-1}} + 2\tq{k-2}\tq{k}\biggr] -(B-k+1)\tq{k-1}\tq{k}\biggr)\\
       &\geq (B-k)\Bigl(2(k-2)\tq{k-2}\tq{k}+(B-k+1)\tq{k-1}\tq{k}\Bigr),
   \end{align*}
   where we have applied the inductive hypothesis in each final step.
   Combining these bounds,
   \begin{align*}
     \Eq[n+1]{k} &\geq \frac{(B-k)}{B^2}\biggl(
        -2(k-2)\tq{k-2}\tq{k}
        + (B-k+1)\tq{k-1}\tq{k}\\
        &\qquad\qquad\qquad\qquad+ 2(k-2)\tq{k-2}\tq{k}-(B-k+1)\tq{k-1}\tq{k}\biggr)\\
        &=0.\qedhere
   \end{align*}
\end{proof}

\begin{proof}[Proof of \thref{lem:lc.variant}]
  First, we dispense with the case that any of $\tq[n]{k-1}$, $\tq[n+1]{k+1}$,
  $\tq[n]{k}$, or $\tq[n+1]{k}$ are equal to zero.
  If either of $\tq[n]{k-1}$ or $\tq[n+1]{k+1}$
  equals zero, then the left-hand side of \eqref{eq:lc.variant} is zero and the inequality holds.
  If $\tq[n]{k}$ or $\tq[n+1]{k}$ equals $0$ then
  $\tq[n+1]{k+1}=0$, since the support of $\tq[n]{k}$ is $\{w,\ldots,w+n\}$;
  in this case both sides of \eqref{eq:lc.variant} are zero.
  Thus we assume from now on that these four terms are all nonzero.

  Now, proving the lemma is equivalent to showing
  $\tq[n+1]{k}/\tq[n+1]{k+1} - \tq[n]{k-1}/\tq[n]{k}\geq 0$.
  We compute
  \begin{align*}
    \frac{\tq[n+1]{k}}{\tq[n+1]{k+1}} - \frac{\tq[n]{k-1}}{\tq[n]{k}}
       &= \frac{\tq[n+1]{k} - \frac{\tq[n]{k-1}}{\tq[n]{k}} \tq[n+1]{k+1}}{\tq[n+1]{k+1}}\\
       &= \frac{1}{\tq[n+1]{k+1}}\Biggl( 
         \biggl(\frac{k-1}{B_n}\biggr) \tq[n]{k-1} + \biggl(1 - \frac{k}{B_n}\biggr)\tq[n]{k}\\
           &\qquad\qquad\qquad- \frac{\tq[n]{k-1}}{\tq[n]{k}}\biggl[\biggl(\frac{k}{B_n}\biggr) \tq[n]{k} + \biggl(1 - \frac{k+1}{B_n}\biggr)\tq[n]{k+1}\biggr]\Biggr)\\
       &= \frac{1}{B_n\tq[n+1]{k+1}\tq[n]{k}}\Biggl( -\tq[n]{k-1}\tq[n]{k} + (B_n-k)\bigl(\tq[n]{k}\bigr)^2  - (B_n -k-1)\tq[n]{k-1}\tq[n]{k+1}\Biggr),
  \end{align*}
  which is nonnegative by \thref{lem:lc.tech}.  
\end{proof}

\begin{remark}\thlabel{rmk:p<1.use}
  It is possible to avoid all the work of this appendix, at the cost of a slightly inferior
  concentration bound for $N_n(1,w)$. The result of this appendix (\thref{lem:lc.variant})
  is used to prove that $N_{n-l}^{[l+1]}(b,w) + l \succeq N_n^{(l+1)}(b,w)$ (\thref{lem:unfactorialize}),
  which is then applied in the proof of Proposition~\ref{ineq}. An alternate path is to invoke the 
  following stochastic inequality between the factorial and power bias transformations,
  which holds for any nonnegative random variable:
  \begin{align}\label{eq:power.to.fac}
    X^{(l+1)}\utailprec{p} X^{[l+1]},
  \end{align}
  where
  \begin{align*}
    p = \frac{\E X^{l+1}}{\E\bigl[ X (X+1)\cdots (X+l) \bigr]}.
  \end{align*}
  Modifying the derivation in Proposition~\ref{ineq} slightly, we get
  \begin{align*}
    N_n(1,w) &\eqd Q_w (N_{n-l}(1,w+1+l)-w-1) \\
      &\succeq Q_w(N_n(1,w+1+l) - l -w -1,
  \end{align*}
  with the second line holding since at most $l$ white balls
  can be added from steps $n-l$ to $n$. Then following the same
  steps as in Proposition~\ref{ineq},
  \begin{align*}
    N_n(1,w) &\succeq
        Q_w(N_n^{[l+1]}(1,w)-w) \succeq V_wN_n^{[l+1]}(1,w).
  \end{align*}
  Finally, invoking \eqref{eq:power.to.fac}, we have 
  \begin{align*}
    N_n(1,w)\utailsucc{p} V_wN_n^{(l+1)}(1,w)\eqd N_n^*(1,w).
  \end{align*}
  The concentration bounds obtained from this are worse because of the factor of $p$ in the exponent,
  but it does illustrate how the $p<1$ versions of our concentration bounds can be used.
\end{remark}

\section*{Appendix~B}
  In this appendix, we prove \thref{prop:reliability} for the convenience of the reader.
  See also \cite[Chapters~4 and 6]{BP} for more background material.

  \begin{proof}[Proof of \thref{prop:reliability}\ref{i:ifr.char}]
    Suppose that $X$ is D-IFR and let $p_n = \P[X=n\mid X\geq n]$.
    To show that $[X-k\mid X>k]$ is stochastically decreasing in $k$,
    construct a random variable $T$
    by the following procedure: Fix some $k\geq 0$. Start at 1 and halt with probability $p_{k+1}$;
    otherwise advance to 2 and halt with probability $p_{k+2}$; otherwise advance to $3$,
    and continue like this, letting $T$ be the value where we halt. It is evident
    that $T\sim[X-k\mid X>k]$. Since $p_n$ is increasing, we are more likely to halt
    at each step when $k$ is increased. By a simple coupling, this demonstrates
    that $[X-k\mid X>k]$ is stochastically decreasing in $k$.
    
    Conversely, suppose that $[X-k\mid X > k]$ is stochastically decreasing in $k$.
    Then
    \begin{align*}
      \P\bigl[X-k+1\leq 1 \bigmid X>k-1\bigr] \leq \P\bigl[X-k\leq 1\bigmid X>k\bigr]
    \end{align*}
    by the definition of stochastic dominance, which proves that
    \begin{align*}
      \P[X=k\mid X\geq k]&\leq \P[X=k+1\mid X\geq k+1].\qedhere
    \end{align*}
  \end{proof}
  
  \begin{proof}[Proof of \thref{prop:reliability}\ref{i:nbue.char}]
    From the definition of the discrete equilibrium transform,
    \begin{align*}
      \P[X^e=n] = \sum_{k=n}^{\infty} \frac{1}{k}\P[X^s=k] = \sum_{k=n}^{\infty} \frac{1}{\E X}\P[X=k]
        = \frac{1}{\E X} \P[X\geq n].
    \end{align*}
    Hence
    \begin{align}
      \P[X^e > k] &= \frac{1}{\E X}\sum_{n=k+1}^\infty\P[X\geq n]\label{eq:Xe>k}\\
      &=\frac{1}{\E X}\E\bigl[ (X-k)\1\{X> k\}\bigr]
      = \P[X> k]\frac{\E[X-k\mid X> k]}{\E X}.\nonumber
    \end{align}
    Therefore $\P[X^e > k]\leq \P[X>k]$ holds for all $k\geq 1$ if and only if
    $\E[X-k\mid X>k]\leq \E X$ for all $k\geq 1$.
  \end{proof}

  \begin{proof}[Proof of \thref{prop:reliability}\ref{i:hierarchy}]
    Suppose $X$ takes values in the positive integers and is log-concave.
    Let $p_n=\P[X=n]$, and let $N$ be the highest value such that $p_N>0$,
    with $N=\infty$ a possibility. From the definition of log-concave,
    \begin{align*}
      \frac{p_{n-1}}{p_n}\leq \frac{p_n}{p_{n+1}}
    \end{align*}
    for all $1\leq n\leq N$. This implies that for any fixed $k$, the ratio
    \begin{align*}
      \frac{\P[X - k =n \mid X>k]}{ \P[X-k-1=n\mid X>k+1]} = \frac{p_{n+k}}{p_{n+k+1}}\cdot \frac{\P[X>k+1]}{\P[X>k]}
    \end{align*}
    is increasing in $n$, and this condition implies
    that $[X-k\mid X>k]$ stochastically dominates $[X-k-1\mid X>k+1]$
    (see \cite[Theorem~1.C.1]{SS}).
    Hence $X$ is D-IFR by \thref{prop:reliability}\ref{i:ifr.char}.
    
    Now, suppose that $X$ is D-IFR. It follows from \thref{prop:reliability}\ref{i:ifr.char}
    that $\E[X-k\mid X>k]$ is decreasing in $k$ for integers $k\geq 0$, proving that
    \begin{align*}
      \E[X-k\mid X>k] \leq \E[X-0\mid X>0] = \E X.
    \end{align*}
    By \thref{prop:reliability}\ref{i:nbue.char}, this shows that $X$ is NBUE.
  \end{proof}

 \bigskip
\section*{Acknowledgments} 

\noindent The initial portion of this work was conducted at the meeting \emph{Stein's method and applications in high-dimensional statistics} held at the American Institute of Mathematics in August 2018. We would also like to express our gratitude to John Fry and staff Estelle Basor, Brian Conrey, and Harpreet Kaur at the American Institute of Mathematics for  the generosity and excellent hospitality in hosting this meeting at the Fry's Electronics corporate headquarters in San Jose, CA, and Jay Bartroff, Larry Goldstein, Stanislav Minsker and Gesine Reinert for organizing such a stimulating meeting. 
T.J.\ received support from NSF grant DMS-1811952 and PSC-CUNY Award \#62628-00 50.

 \bibliographystyle{amsalpha}
\bibliography{stein_concentration}

\end{document}